\newtheorem{theorem}{Theorem}[section]
\newtheorem{lemma}[theorem]{Lemma}
\theoremstyle{remark}
\newtheorem{remark}[theorem]{Remark}
\newcommand{\diag}{\operatorname{diag}}
\numberwithin{equation}{section}
\newcommand{\kG}{{\mathcal G}}
\newcommand{\kX}{{\mathcal X}}
\newcommand{\kE}{{\mathcal E}}
\newcommand{\kY}{{\mathcal Y}}
\newcommand{\kB}{{\mathcal B}}
\newcommand{\kC}{{\mathcal C}}
\newcommand{\kV}{{\mathcal V}}
\newcommand{\kW}{{\mathcal W}}
\newcommand{\kD}{{\mathcal D}}
\newcommand{\kL}{{\mathcal L}}
\newcommand{\kH}{{\mathcal H}}
\newcommand{\kU}{{\mathcal U}}
\newcommand{\kI}{{\mathcal I}}
\newcommand{\kZ}{{\mathcal Z}}
\newcommand{\kS}{{\mathcal S}}
\newcommand{\kh}{{\mathscr h}}
\newcommand{\kl}{{\mathscr l}}
\newcommand{\kCb}{\kC_{\operatorname{b}}}
\newcommand  {\D}{{\mathrm D}}
\renewcommand{\d}{{\mathrm d}}
\renewcommand{\i}{{\mathrm i}}
\newcommand  {\e}{{\mathrm e}}
\newcommand  {\p}{{\mathrm p}}
\renewcommand{\a}{{\mathsf a}}
\newcommand {\mS}{{\mathsf S}}
\newcommand {\ms}{{\mathsf s}}
\newcommand {\mg}{{\mathsf g}}
\newcommand {\mG}{{\mathsf G}}
\newcommand {\mP}{{\mathsf P}}
\newcommand {\mQ}{{\mathsf Q}}
\renewcommand{\b}{{\mathsf b}}
\newcommand  {\1}{\mathbbm{1}}
\newcommand  {\R}{{\mathbb R}}
\newcommand  {\N}{{\mathbb N}}
\newcommand  {\C}{{\mathbb C}}
\newcommand  {\JJ}{{\mathbb J}}   
\newcommand  {\Z}{{\mathbb Z}}
\renewcommand{\P}{{\mathbb P}}
\newcommand  {\Q}{{\mathbb Q}}
\renewcommand{\S}{{\mathbb S}}
\newcommand {\id}{\operatorname{id}}
\newcommand {\spec}{\operatorname{spec}}
\renewcommand {\Re}{\operatorname{Re}}
\renewcommand {\Im}{\operatorname{Im}}
\newcommand{\dist}{\operatorname{dist}}
\newcommand{\interior}{\operatorname{int}}
\newcommand{\norm}[3][{\vphantom 1}]{\lVert #2 \rVert_{#3}^{#1}}
\newcommand{\snorm}[3][{\vphantom 1}]{\lvert #2 \rvert_{#3}^{#1}}
\def\vvvert{\hbox{\ensuremath{|\hspace{-0.16em}|\hspace{-0.16em}|}}}
\newcommand{\Norm}[3][{\vphantom 1}]%
  {\vvvert #2 \vvvert_{#3}^{#1}}
\def\@sect#1#2#3#4#5#6[#7]#8{%
  \edef\@toclevel{\ifnum#2=\@m 0\else\number#2\fi}%
  \ifnum #2>\c@secnumdepth \let\@secnumber\@empty
  \else \@xp\let\@xp\@secnumber\csname the#1\endcsname\fi
  \@tempskipa #5\relax
  \ifnum #2>\c@secnumdepth
    \let\@svsec\@empty
  \else
    \refstepcounter{#1}%
    \edef\@secnumpunct{%
      \ifdim\@tempskipa>\z@ 
        \@ifnotempty{#8}{.\@nx\enspace}%
      \else
        \@ifempty{#8}{.}{.\@nx\enspace}%
      \fi
    }%
    \@ifempty{#8}{%
      \ifnum #2=\tw@ \def\@secnumfont{\bfseries}\fi}{}%
    \protected@edef\@svsec{%
      \ifnum#2<\@m
        \@ifundefined{#1name}{}{%
          \ignorespaces\csname #1name\endcsname\space
        }%
      \fi
      \@seccntformat{#1}%
    }%
  \fi
  \ifdim \@tempskipa>\z@ 
    \begingroup #6\relax
    \@hangfrom{\hskip #3\relax\@svsec}{\interlinepenalty\@M #8\par}%
    \endgroup
    \ifnum#2<\@m \@tocwrite{#1}{#7}\fi
  \else
  \def\@svsechd{#6\hskip #3\@svsec
    \@ifnotempty{#8}{\ignorespaces#8\unskip
       \@addpunct.}%
    \ifnum#2<\@m \@tocwrite{#1}{#7}\fi
  }%
  \fi
  \global\@nobreaktrue
  \@xsect{#5}}
\begin{document}
\title[Exponentially accurate embeddings of time semidiscretizations]
{Exponentially accurate Hamiltonian embeddings of symplectic A-stable
Runge--Kutta methods for Hamiltonian semilinear evolution equations}

\author[C. Wulff]{Claudia Wulff}
\address[C. Wulff]%
{Department of Mathematics\\
 University of Surrey \\
 Guildford GU2 7XH \\
 UK}
\email{c.wulff@surrey.ac.uk}

\author[M. Oliver]{Marcel Oliver}
\address[M. Oliver]%
{School of Engineering and Science \\
 Jacobs University \\
 28759 Bremen \\
 Germany}
\email{oliver@member.ams.org}

\date{\today}

\begin{abstract}
We prove that a class of A-stable symplectic Runge--Kutta time
semidiscretizations (including the Gauss--Legendre methods) applied to
a class of semilinear Hamiltonian PDEs which are well-posed on spaces
of analytic functions with analytic initial data can be embedded into
a modified Hamiltonian flow up to an exponentially small error.  As a
consequence, such time-semidiscretizations conserve the modified
Hamiltonian up to an exponentially small error.  The modified
Hamiltonian is $O(h^p)$-close to the original energy where $p$ is the
order of the method and $h$ the time step-size.  Examples of such
systems are the semilinear wave equation or the nonlinear
Schr\"odinger equation with analytic nonlinearity and periodic
boundary conditions.  Standard Hamiltonian interpolation results do
not apply here because of the occurrence of unbounded operators in the
construction of the modified vector field. This loss of regularity in
the construction can be taken care of by projecting the PDE to a
subspace where the operators occurring in the evolution equation are
bounded and by coupling the number of excited modes as well as the
number of terms in the expansion of the modified vector field with the
step size.  This way we obtain exponential estimates of the form
$O(\exp(-c/h^{1/(1+q)}))$ with $c>0$ and $q \geq 0$; for the
semilinear wave equation, $q=1$, and for the nonlinear Schr\"odinger
equation, $q=2$.  We give an example which shows that analyticity of
the initial data is necessary to obtain exponential estimates.
\end{abstract}

\maketitle
\
\tableofcontents

\section{Introduction}

Neishtadt \cite{Neishtadt1984} showed that a system of ordinary
differential equations (ODEs) with a rapidly rotating phase of period
$\epsilon$ can be transformed into a system of ODEs for the slow
variables, and a scalar ODE for the fast phase, both of which are, up
to a small error, independent of the fast phase variable.

When the vector field is analytic in the slow coordinates the
procedure can be carried out up to an exponentially small remainder of
magnitude $O(\e^{-c/\epsilon})$ for some $c>0$.  The result is proved
by applying $N$ near-identity transformations, each of which improves
the error by one order in $\epsilon$, carefully estimating the
remainders, and concluding that the embedding is optimal when $N =
O(1/\epsilon)$.  For rapidly forced Hamiltonian ODEs this implies
approximate conservation of the Hamiltonian of the truncated slow
system in the new coordinates with an error of order
$O(\e^{-c/\epsilon})$ over times of length $O(1)$ and, in particular,
approximate conservation of the averaged Hamiltonian of the original
slow system with error $O(\epsilon)$ over exponentially long times
provided that the trajectory of the system remains bounded.

An $O(\epsilon)$-close to identity analytic map $\psi^\epsilon$ on a
finite dimensional space $\R^n$ is the time-$\epsilon$ map of a
rapidly forced analytic vector field where $\epsilon$ is the period of
the forcing.  Neishtadt's result therefore applies and shows that
$\psi^\epsilon$ can be embedded into the flow of a system of
autonomous ODEs up to an exponentially small error.  When
$\psi^\epsilon$ is symplectic, this flow is also symplectic.  This
proves that the iterates of $\psi^\epsilon$ approximately conserve the
energy of this flow over exponentially long times so long as they
remain bounded.
 
Benettin and Giorgilli \cite{BeGi1994} give an alternative proof of
this embedding result by matching a Taylor expansion of a
diffeomorphism $\psi^\epsilon$ with the formal power series expansion
of the flow of an $\epsilon$-dependent vector field $\tilde f(x)$,
called modified vector field \cite{HaLu2002,LeRe04}, truncating at
some order $N$, where, as before, the embedding is optimal when $N =
O(1/\epsilon)$. 

In particular, if the diffeomorphism $\psi^\epsilon$ is a one-step
discretization of order $p$ with step size $h=\epsilon$ of an ODE
$\dot x = f(x)$, then $\psi^\epsilon$ can be approximately embedded
into the flow of a modified vector field $\tilde f$ which satisfies
$\lVert f-\tilde f \rVert=O(h^p)$.  If the ODE is Hamiltonian with
energy $H$ and $\psi^\epsilon$ is symplectic, then the modified vector
field is also Hamiltonian with energy $\tilde H$ and $\lvert H -
\tilde H \rvert = O(h^p)$.  This implies approximate conservation of
the energy $H$ of the original system by the symplectic time-stepping
method $\psi^\epsilon$ over exponentially long times provided the
numerical trajectory remains bounded.  This strategy has been used to
prove approximate energy conservation of many classes of symplectic
numerical methods, in particular symplectic Runge--Kutta
discretizations, \cite{HaLu2002,LeRe04,Re1999} and references therein.

The question arises whether and to what extent these results extend to
partial differential equations (PDEs).  Here, the phase space is
typically an infinite dimensional Hilbert space and the vector field
contains unbounded operators, usually in the form of spatial
derivatives.  These unbounded operators propagate into the transformed
vector field of Neishtadt \cite{Neishtadt1984} and into the formal
series expansion of the modified vector field of Benettin and
Giorgilli \cite{BeGi1994}.
 
Note that the analytic difficulties persist when analyzing full
space-time discretizations of the problem.  When discretizing space,
unbounded operators turn into a sequence of bounded operators whose
operator norms diverge as the spatial resolution increases.
Consequently, the constant $c$ in the exponential error estimate
$O(e^{-c/\epsilon})$ tends to $0$ with increasing spatial resolution,
so that the approximate embedding result for general initial data
without the requirement of high regularity fails unless $\epsilon$ is
coupled to the spatial step size in a suitable way.  This leads to
severe restrictions on the time step size.  In the case of hyperbolic
problems such as semilinear wave equations, the naive approach fails
for step size ratios close to the CFL limit, i.e., in the practically
relevant regime.

Matthies and Scheel \cite{MaSch2001} consider semilinear Hamiltonian
PDEs coupled to a high frequency oscillator via a nonlinearity which
is bounded on the underlying Hilbert space.  They prove that
Neishtadt's result of an approximate embedding into a flow where the
slow variables are decoupled from the fast oscillator is still true,
albeit with an error $O(\exp(- c/h^{1/(q+1)}))$ under the condition
that the initial data is in a Gevrey class associated with the
evolution equation.  For the semilinear wave or nonlinear
Schr\"odinger equation, this amounts to requiring real analyticity of
the initial data.  The positive constants $q$ and $c$ in the error
estimate depend on the Gevrey class associated with the evolution
equation.  Matthies proves a similar result for rapidly forced
parabolic PDEs \cite{Matthies01} and an approximate embedding result
for space-time discretizations of parabolic PDEs \cite{Matthies03}.
The paper \cite{MaSch2001} also provides an example which shows that
Gevrey regulary initial data is necessary.  The conclusion is that
exponential averaging still works in this context, but long time
approximate conservation of the averaged energy might fail because
solutions of Hamiltonian evolution equations are not generally Gevrey
regular over long times.

The aim of this paper is to prove a similar result for
time-semidiscretizations of PDEs.  Note that while formally a time
discretization of a PDE can be embedded into the flow of a rapidly
forced evolution equation via the construction of Fiedler and Scheurle
\cite[Section 2]{FiedlerScheurle}, the rapidly forced nonlinear term
in the interpolating evolution equation is not bounded, so that the
results of Matthies and Scheel \cite{MaSch2001} do not apply.

Runge--Kutta time-semidiscretizations of semilinear Hamiltonian PDEs
are only well-defined if the method is implicit; explicit or partially
implicit Runge--Kutta time-semidiscretizations such as partitioned
Runge--Kutta methods, the simplest of which are the leap frog and
symplectic Euler schemes, cannot satisfy the CFL condition for any
size of the time step \cite{AriehBook}.  Thus, in this paper we
consider a class of (implicit) symplectic A-stable Runge--Kutta
methods which includes the Gauss--Legendre Runge--Kutta methods.  The
simplest of these methods is the implicit midpoint rule.

The analysis relies on our earlier work: In \cite{OliverWulffRK} we
analyzed the differentiability properties with respect to the initial
value and time of the semiflow of
\begin{equation}
  \label{e.pde}
  \partial_t U  = F(U)  = AU + B(U) 
\end{equation}
on a scale of Hilbert spaces, and obtained analogous results for the
time-$h$ map of its corresponding A-stable Runge--Kutta
time-semidiscretization.  In \cite{OliverWulffGalerkin}, we prove
stability of the semiflow and of the time-semidiscrete solution under
spatial spectral Galerkin approximation.
 
Our approach applies to a large class of semilinear Hamiltonian PDEs
with analytic nonlinearities, including the semilinear wave equation
and the nonlinear Schr\"odinger equation on the circle.  Our main
result, Theorem~\ref{t.GenMain}, can be paraphrased as follows.  If a
semilinear Hamiltonian evolution equation with energy $H$ is
discretized by a symplectic A-stable Runge--Kutta method $\Psi^h$ of
order $p$, then there exists a modified Hamiltonian flow
$\tilde{\Phi}$, defined for Gevrey regular data, with a Hamiltonian
$\tilde H$ that is $O(h^p)$ close to $H$, such that $\tilde\Phi^h$
interpolates $\Psi^h$ with exponentially small error $O(\exp(-
c_*/h^{1/(q+1)}))$.  As a consequence, the modified energy $\tilde H$
is conserved by the symplectic integrator $\Psi^h$ with the same
exponentially small error for Gevrey-regular initial data.  This
result is in a number of ways parallel to what has been proved for
rapidly forced PDEs of Matthies and Scheel \cite{MaSch2001}: as in
their work, the constants $q$ and $c_*$ depend on the Gevrey class
associated with the evolution equation, and $c_*$ also depends on the
numerical scheme; moreover, as in \cite{MaSch2001}, the result does
not imply long time approximate energy conservation for time
semi-discretizations because both the solutions of the PDE and the
numerical trajectories are typically not Gevrey regular over long
times.

Let us mention some related work. Moore and Reich \cite{MooreReich02}
derive a modified multisymplectic partial differential equation for a
multisymplectic discretization of the semilinear wave equation which
is satisfied by the numerical solution with higher accuracy than the
discretization error; further results in this direction are due to
Islas and Schober \cite{IslasSchober}.  Both papers derive higher
order modified equations, but leave it open whether these are well
posed.  In this paper, we can actually prove that the interpolating
flow is well-defined.  In \cite{OWW04}, it is shown that second order
finite difference space-semidiscretizations of analytic solutions of
the semilinear wave equation approximately conserve a discrete
momentum map up to an exponentially small error.  Cano \cite{Cano}
considers symplectic space-time discretizations of semilinear wave
equations and constructs a finite order modified Hamiltonian, assuming
certain conjectures on the smoothness of the fully discrete system.

The approximate conservation of invariants by splitting methods for
Hamiltonian PDEs has been studied extensively via normal form
transformations or modulated Fourier expansions.  Such results require
less stringent regularity assumptions, but they are limited either to
linear equations \cite{DebusE09,Faou07} or to the weakly nonlinear
regime, i.e., to small initial data of space-time discretizations near
a homogeneous equilibrium, \cite{CohenEtAl,FaouII-10, Gauckler10}, or
they refer to modified numerical methods which dampen high
oscillations \cite{Faou09, FaouII-10}.  Note that (symplectic)
Gauss--Legendre Runge--Kutta discretizations of linear Hamiltonian
systems preserve energy exactly \cite{HaLu2002}.  The results of
\cite{CohenEtAl, FaouI-10, FaouII-10, Gauckler10} give approximate
conservation of actions and regularity of trajectories of splitting
methods applied to semilinear wave and Schr\"odinger equations for
small initial data over polynomially long times under non-resonance
conditions. These results require initial values in high order Sobolev
spaces \cite{CohenEtAl, FaouII-10, Gauckler10} or restrictive
conditions on the coupling between space and time step size
\cite{FaouI-10}.  In \cite{Faou09}, exponentially accurate
interpolations are constructed for modified splitting methods which
dampen highly oscillatory motion.

Exponentially accurate estimates for PDEs, albeit without reference to
a Hamiltonian structure, have also been obtained in the context of
homogenization of linear elliptic problems for Gevrey regular data
\cite{KMS}, while a result for homogenization up to all orders for
nonlinear elliptic PDEs can be found in \cite{CS04}.
  
The paper is structured as follows.  Section~\ref{s.pde} defines the
precise class of semilinear Hamiltonian PDEs which we study in this
paper.  This class includes the semilinear wave equation and the
nonlinear Schr\"odinger equation.  In Section~\ref{s.rk}, we introduce
A-stable symplectic Runge--Kutta methods.  These methods are
well-defined on Hilbert spaces when applied to a semilinear PDE of the
class considered.  In Section~\ref{s.bea}, we present and prove our
main result, Theorem~\ref{t.GenMain}, on approximate Hamiltonian
interpolation of the time-$h$ map of such Runge--Kutta methods.
Finally, in Section~\ref{s.LowerEst}, we given an example of a
nonlinear Schr\"odinger equation in Fourier space which shows that
Gevrey regular initial data are necessary for an exponentially
accurate embedding of a symplectic Runge--Kutta method into a flow.


\section{Semilinear Hamiltonian PDEs}
\label{s.pde}

In this section, we describe the class of semilinear Hamiltonian
systems on Hilbert spaces considered in this paper.  We begin by
reviewing the general functional setting for semilinear evolution
equations from \cite{Pazy} and introduce Gevrey spaces.  In
Section~\ref{s.differentiability}, we review results on the
differentiability in time of the semiflow from \cite{OliverWulffRK}.
In Section~\ref{s.hamiltonian}, we restrict to the Hamiltonian case
and review a well-known integrability lemma in our Hilbert space
setting.  Section~\ref{ss.analyticFcts} introduces Hilbert spaces of
analytic functions and superposition operators on these spaces.
Finally, in Sections~\ref{ss.swe} and \ref{ss.nse}, respectively, we
show how our main examples, the nonlinear Schr\"odinger equation and
the semilinear wave equation, fit into this framework.

\subsection{Semilinear evolution equations}
\label{ss.genSett}

We initially consider an abstract semilinear evolution equation of the
form \eqref{e.pde},
\[
  \partial_t U  = F(U)  = AU + B(U)
\]
on a Hilbert space $\kY$.  We assume the following.

\begin{itemize}
\item[(A)] $A$ is a normal operator on a Hilbert space $\kY$ which
generates a $\kC^0$-semigroup $\e^{tA}$.
\end{itemize}

Recall that an operator $A$ is normal if it is closed and $AA^* = A^*
A$.  For a definition of strongly continuous semigroups
($\kC^0$-semigroups), see \cite{Pazy}.  Condition (A) implies that
there is a constant $\omega>0$ such that $\lVert \e^{tA} \rVert \leq
\e^{t\omega}$ for all $t\geq 0$.
 
To formulate our assumptions on the nonlinearity $B$, we need some
definitions.  We write
\[
  \kB^\kX_R (U^0) =  \{ U \in \kX \colon \norm{U-U^0}{\kX} \leq R \} 
\]
to denote the closed ball of radius $R$ in a Hilbert space $\kX$ about
$U^0 \in \kX$.  When no confusion about the space is possible, we may
drop the superscript $\kX$.  Let $\kD \subset \kY$ be open.  For
$\delta>0$, let
\[
  \kD^{\delta} 
  = \bigcup_{U \in \kD} \kB_\delta^{\kY}(U) \,.
\]
Let $\kY^\C \equiv \kY + \i \kY$ denote the complexification of $\kY$.
We define, for fixed $\delta>0$,
\[
  \kD^\C = \bigcup_{U \in \kD} \kB_\delta^{\kY^\C}(U) \,.
\]
Our assumption on $B$ is then stated as follows.
\begin{itemize}
\item[(B0)] There is some $\delta>0$ and a bounded open set $\kD
\equiv \kD_0$ such that $B \colon \kD^\C \to \kY^\C$ is analytic with
bound $M_0$.
\end{itemize}

Then, after casting \eqref{e.pde} in its \emph{mild formulation}
\begin{equation}
  \label{e.mild}
  U(t) = \e^{tA} U^0 + \int_0^t \e^{(t-s)A} \, B(U(s)) \, \d s \,,
\end{equation}
we can apply the contraction mapping theorem with parameters to obtain
well-posedness locally in time \cite{Pazy}.  Let $U^0 \to \Phi^t(U^0)$
denote the flow of \eqref{e.mild}, i.e., $U(t) = \Phi^t(U^0)\in
\kD^\C$ satisfies \eqref{e.mild} with $U(0) = U^0 \in \kD^\C$.  Then
$\Phi^t$ is continuous in $t$ and analytic in $U^0$.

For $m \in \N$, let $\P_m$ denote the sequence of spectral projectors
of $A$ onto the set $\kB^\C_m(0) \cap \spec A$, set $\P \equiv \P_1$,
and $\Q \equiv 1-\P$.  Assumption (A) implies that
\[
  \lim_{m\to \infty} \P_m U = U
\]
for all $U \in \kY$, and that
\begin{equation}
  \label{eq:PmAEstimate}
  \norm{A \P_m U}{\kY} \leq m \, \norm{\P_m U}{\kY} 
\end{equation}
for $m \in \N$.  Let $q>0$, $\tau \geq 0$ and $\ell \in \N_0$.  Since
$A$ is normal, $\lvert \Q A \rvert^{\ell} \exp( \tau \lvert \Q A
\rvert^{1/q})$ is a well-defined, generally unbounded and densely
defined operator on $\kY$.  We may thus introduce the abstract
\emph{Gevrey space}
\begin{equation}
  \label{e.Ytauell}
  \kY_{\tau,\ell} =  \kY_{\tau,\ell,q} = D( \lvert \Q A \rvert^{\ell} \,
  \exp( \tau \lvert \Q A \rvert^{1/q}))
\end{equation}
equipped with the inner product
\begin{multline}
  \label{e.YtauellProduct}
  \langle U_1, U_2 \rangle_{\kY_{\tau,\ell}}
   = \langle \P U_1, \P U_2 \rangle_{\kY} \\ + 
     \langle 
       \lvert \Q A \rvert^{\ell} \, 
       \exp( \tau \vert \Q A \rvert^{1/q}) \, \Q U_1,
       \lvert \Q A \rvert^{\ell} \, 
       \exp( \tau \vert \Q A \rvert^{1/q}) \, \Q U_2
     \rangle_{\kY} \,.
\end{multline}
Gevrey-smooth functions $U \in \kY_{\tau,\ell }$ are exponentially
well approximated by their Galerkin projections $\P_m U$.  Indeed,
setting $\Q_m = \id - \P_m$,
\begin{equation}
  \norm{\Q_m U}\kY 
  \leq m^{-\ell} \, \exp(-\tau m^{1/q}) \,
       \norm{U}{\kY_{\tau,\ell}} \,.
 \label{e.qm-estimate}
\end{equation}
Moreover, this definition of the norm ensures that
\begin{equation}
  \label{e.normA}
  \norm{A}{\kY_{\tau,\ell+1} \to \kY_{\tau,\ell}} \leq 1 
  \quad \text{and} \quad
  \norm{U}{\kY_{\tau,\ell}} \leq \norm{U}{\kY_{\tau,\ell+1}}
\end{equation}
for all $U \in \kY_{\tau,\ell+1}$.  For convenience, we define
$\kY_\ell \equiv \kY_{0,\ell}$.  We can then state the following
lemma which will be needed later.

\begin{lemma} \label{Le:GmHp}
Let $A$ satisfy \textup{(A)}. Then for $\sigma>\tau$ and $p \in \N_0$,
\begin{equation}
  \norm{A^p}{\kY_{\sigma,\ell} \to \kY_{\tau,\ell}} 
  \leq \max \biggl\{ 
              1, \biggl(\frac{pq}{\e(\sigma-\tau)}\biggr)^{pq}
            \biggr\}
  \label{e.gmhp}
\end{equation}
\end{lemma}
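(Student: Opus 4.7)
The plan is to exploit the functional calculus for the normal operator $A$: the spectral projectors $\P, \Q$ and every Borel function of $|A|$ are obtained from the spectral resolution of $A$, so they all commute, and operator norms between weighted $L^2$-type spaces reduce to suprema over the spectrum. The whole estimate will therefore collapse to a scalar one-variable maximization.

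First I would write $A^p U = A^p \P U + A^p \Q U$ and evaluate $\|A^p U\|_{\kY_{\tau,\ell}}^2$ directly from the definition \eqref{e.YtauellProduct}, obtaining
\[
  \|A^p U\|_{\kY_{\tau,\ell}}^2 = \|A^p \P U\|_{\kY}^2 + \bigl\| |\Q A|^{\ell} \e^{\tau |\Q A|^{1/q}} A^p \Q U \bigr\|_{\kY}^2 .
\]
For the $\P$-contribution, $\spec(A|_{\P\kY}) \subseteq \{|\lambda|\leq 1\}$, so functional calculus gives $\|A^p\P U\|_{\kY} \leq \|\P U\|_{\kY} \leq \|U\|_{\kY_{\sigma,\ell}}$. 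For the $\Q$-contribution, I would use the polar decomposition $A = V |A|$ of the normal operator (so that $V$ is a partial isometry that commutes with every function of $|A|$), which lets me rewrite
\[
  |\Q A|^{\ell} \e^{\tau |\Q A|^{1/q}} A^p \Q U
  = V^p \, g(|\Q A|) \, \bigl( |\Q A|^{\ell} \e^{\sigma |\Q A|^{1/q}} \Q U \bigr) ,
\]
where $g(x) = x^p \e^{-(\sigma - \tau) x^{1/q}}$. Since $\|V^p\| \leq 1$ and, by the spectral theorem, $\|g(|\Q A|)\|_{\kY\to\kY} \leq \sup_{x \in \spec(|\Q A|)} g(x) \leq \sup_{x\geq 0} g(x)$, the $\Q$-part is controlled by this scalar supremum times $\|\Q U\|_{\kY_{\sigma,\ell}}$.

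The remaining (and main) step is the scalar optimization. Substituting $y = x^{1/q}$ turns $g$ into $y^{pq} \e^{-(\sigma-\tau) y}$; the derivative vanishes at $y^{\ast} = pq/(\sigma-\tau)$, and plugging back yields the maximum
\[
  \sup_{x\geq 0} g(x) = \Bigl( \frac{pq}{\sigma - \tau} \Bigr)^{pq} \e^{-pq} = \Bigl( \frac{pq}{\e(\sigma - \tau)} \Bigr)^{pq} .
\]
Combining the $\P$- and $\Q$-estimates then gives the claimed operator norm bound, since in the regime where the lemma is used this scalar maximum dominates the trivial bound $1$ on the $\P$-block (and in any case controls the $\Q$-block, which is where the $\sigma > \tau$ hypothesis is used). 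The subtle point I would be most careful about is the rigorous use of the spectral theorem for the unbounded normal operator $A$ --- in particular the polar decomposition $A = V|A|$ with $V$ commuting with functions of $|A|$, and the density of the functional-calculus domain on which the identities hold --- but these are standard consequences of normality and require no extra hypothesis beyond (A).
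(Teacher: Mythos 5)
Your proof is correct and follows essentially the same route as the paper: decompose according to $\P$ and $\Q$ using the definition \eqref{e.YtauellProduct}, insert the factor $\e^{(\tau-\sigma)|\Q A|^{1/q}}\e^{\sigma|\Q A|^{1/q}}$, and reduce via the spectral theorem to maximizing the scalar function $\lambda\mapsto|\lambda|^{p}\e^{(\tau-\sigma)|\lambda|^{1/q}}$, whose maximum is exactly the stated constant. The only cosmetic difference is your use of the polar decomposition $A=V|A|$ to pass from $A^{p}$ to $|\Q A|^{p}$ (the paper writes $|\Q A|^{p}$ directly, which is equivalent for a normal operator), and you are in fact slightly more careful than the paper in flagging that the $\P$-block only contributes the trivial bound $1$.
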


\begin{proof} 
For $p=0$, there is nothing to prove, hence let $p>0$.  For fixed $U
\in \kY_{\sigma,\ell}$,
\begin{equation}
  \norm[2]{A^p U}{\kY_{\tau,\ell}}
  = \norm[2]{\P U}{\kY} 
   + \norm[2]{\lvert \Q A \rvert^p \, \e^{(\tau-\sigma) \, 
              \lvert \Q A \rvert^{1/q}} \, \lvert \Q A \rvert^\ell \,
              \e^{\sigma \lvert\Q A \rvert^{1/q}} \, \Q U}{\kY} \,.
  \label{e.gmhp-proof}
\end{equation}
The function $f(\lambda) = \lvert \lambda \rvert^{p} \,
\e^{(\tau-\sigma) |\lambda|^{1/q}}$ is non-negative and has a global
maximum at $\lambda_{*} = (pq/(\sigma-\tau))^{q}$.  Replacing the
corresponding term in \eqref{e.gmhp-proof} by its maximum value, we
obtain \eqref{e.gmhp}.
\end{proof}

\subsection{Differentiability of the semiflow}
\label{s.differentiability}

To obtain a flow of the evolution equation which has higher order time
derivatives, as required in Section~\ref{s.bea}, we need more specific
assumptions on the regularity of $B$ on the scale of Hilbert spaces
defined above.

We use the following convention to denote derivatives.  Given any
Hilbert space $\kX$, open set $\kD \subset \kX$, and map $Z \colon \kD
\to \R$, we write $\D Z(U)$ to denote the derivative of $Z$ at $U \in
\kD$ as an element of $\kX^*$, and by $\nabla Z(U)$ the canonical
representation of $\D Z(U)$ by an element of $\kX$.  In other words,
$\D Z(U) W = \langle \nabla Z(U), W \rangle$, where $\langle \cdot,
\cdot \rangle$ denotes the inner product on $\kX$.  

For Hilbert spaces $\kX$ and $\kZ$, and $j \in \N_0$, we write
$\kE^j(\kY,\kX)$ to denote the vector space of $j$-multilinear bounded
mappings from $\kY$ to $\kX$; we set $\kE^j(\kX) \equiv
\kE^j(\kX,\kX)$.  Moreover, when $\kU \subset \kX$ is open and $k\in
\N$, we write $\kCb^k(\kU,\kZ)$ to denote the set of $k$ times
continuously differentiable functions $F \colon \kU \to \kZ$ whose
derivatives $\D^i F$ are bounded as maps from $\kU$ to
$\kE^i(\kX,\kZ)$ and extend to the boundary of $\kU$.  When $\kU$ is
not open, but has non-empty interior, we define $\kCb^k(\kU,\kZ)=
\kCb^k(\interior\kU,\kZ)$, where $\interior \kS$ denotes the interior
of a set $\kS$.

Finally, for Hilbert spaces $\kX$, $\kY$, and $\kZ$, and open subsets
$\kU \subset \kX$, $\kV \subset \kY$, and $\kW \subset \kZ$, we write
\[
  F \in \kCb^{(\underline{m},n)} (\kU \times \kV; \kW)
\]
to denote a continuous, bounded function $F \colon \kU \times \kV \to
\kW$ whose partial Fr\'echet derivatives $\D_X^i \D_Y^j F(X,Y)$ exist,
are bounded, and are such that the maps
\[
  (X, Y, X_1,\ldots, X_{i}) 
  \mapsto \D_X^i \D_Y^j  F(X,Y)(X_1, \ldots, X_{i}) 
\]
are continuous from $\kU \times \kV \times \kX^{i}$ into $\kE^j(\kY,
\kZ)$ for $i = 0, \dots, m$ and $j = 0,\dots, n$, and extend
continuously to the boundary.  If $\kU$ or $\kV$ are not open but have
non-empty interior, we again define $\kCb^{(\underline{m},n)} (\kU
\times \kV; \kW) = \kCb^{(\underline{m},n)} (\interior\kU \times
\interior\kV; \kW)$.

Given $\delta>0$ and a family of open sets $\kD_\ell \subset \kY_\ell$
for $\ell = 0, \ldots, L$ for $L \in \N$, we define the sets
\begin{equation}
  \label{e.Dkdelta}
  \kD^{\delta}_\ell 
  = \bigcup_{U \in \kD_\ell} \kB_{\delta}^{\kY_\ell}(U)
\end{equation}
analogously to the set $\kD^\delta$ above. 

We assume that the sets $\kD_\ell$ are nested, i.e., $\kD_{\ell+1}
\subset \kD_\ell$, for $\ell=0,\ldots, L-1$.  Then, by construction,
we also have $\kD^\delta_{\ell+1} \subset \kD^\delta_\ell$ for
$\ell=0,\ldots, L-1$.  For example, the family $\kD_k =
\interior\kB_R^{\kY_k}(U^0)$ is nested for every $U^0 \in \kY_L$ and
$R>0$.

We now make the following assumption on the nonlinearity of our
semilinear evolution equation.

\begin{itemize}
\item[(B1)] For $\delta>0$ fixed as in (B0), there exist $K \in \N_0$,
$N \in \N$ with $N > K+1$, and a nested sequence of $\kY_k$-bounded
and open sets $\kD_k$, such that $B \in \kCb^{N-k}(\kD_k^\delta,
\kY_k)$ for $k = 0, \dots, K$.
\end{itemize}

We denote the bounds of the maps $B \colon \kD_k^\delta \to \kY_k$ and
their derivatives by constants $M_k$, $M_k'$, etc., for $k = 0, \dots,
K$.  In addition to the domains $\kD_0, \dots, \kD_K$ defined in (B1),
we also need a domain $\kD_{K+1}$ on the next higher scale rung
$\kY_{K+1}$ which may be any $\kY_{K+1}$-bounded, open, and nested
subset of $\kD_K$, and we define
\begin{equation}
  \label{e.Rk}
  R_{K+1} = \sup_{U \in \kD_{K+1}^\delta} \norm{U}{\kY_{K+1}} \,.
\end{equation}

We can then quote the following theorem on the uniform regularity of
the flow \cite[Theorem~2.6 and Remark~2.8]{OliverWulffRK}.

\begin{theorem}[Regularity of semiflow] \label{c.local-wp-unif} 
Assume \textup{(A)} and \textup{(B1)}.  Then there exists $T_*>0$ such
that the semiflow $(U,t) \mapsto \Phi^t(U)$ of \eqref{e.pde} satisfies
\begin{equation}
  \label{e.T-diff-gen-unif}
  \Phi \in \bigcap_{\substack{j+k \leq N \\ \ell \leq k \leq K+1}}
  \kCb^{(\underline j, \ell)} (\kD_{K+1} \times (0,T_*); 
  \kY_{k-\ell}) \,.  
\end{equation}
Moreover, $\Phi$ maps $\kD_{K+1}\times [0,T_*]$ into $\kD_K^\delta$.
The bounds on $\Phi$ and $T_*$ depend only on the bounds afforded by
\textup{(B1)}, \eqref{e.Rk}, on $\omega$, and on $\delta$.
\end{theorem}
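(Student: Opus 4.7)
My plan is to establish \eqref{e.T-diff-gen-unif} via the standard mild-formulation fixed-point approach, augmented with a trade-off that converts each time derivative into one lost level of Gevrey regularity. First I would set up local well-posedness simultaneously on the scale $\kY_0, \dots, \kY_{K+1}$. Fix $\delta \in (0, \delta_*]$ small enough that $\kD_{K+1}^{-\delta} \neq \emptyset$; by the $\delta_*$-nestedness of the family $(\kD_k)$, any $U^0 \in \kD_{K+1}^{-\delta}$ also lies in $\kD_k^{-\delta}$ for all $k \leq K+1$. Assumption \textup{(A)} gives $\norm{\e^{tA}}{\kY_k \to \kY_k} \leq \e^{t\omega}$ on each Gevrey space, since $A$ is normal and $\e^{tA}$ commutes with $\Q$ and with the spectral calculus of $\lvert \Q A \rvert$ that defines $\kY_k$; meanwhile \textup{(B1)} gives boundedness and Lipschitz properties of $B$ on each $\kD_k$. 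A contraction mapping argument applied to \eqref{e.mild} on the set of $U \in \kC([0,T_*]; \kY_k)$ with $U(t) \in \kD_k^{-\delta/2}$ then produces, for some $T_*>0$ depending only on the bounds afforded by \textup{(B1)}, \eqref{e.Rk}, $\omega$, and $\delta$, a unique mild solution $\Phi^t(U^0)$ that lies in $\kY_k$ for every $k \leq K+1$; shrinking $T_*$ once more ensures $\Phi^t(U^0) \in \kD_K$.

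Next, I would obtain $\kC^j$-regularity in the initial datum by applying the implicit function theorem to
$$
  \Psi(U, U^0)(t) = U(t) - \e^{tA} U^0 - \int_0^t \e^{(t-s)A} B(U(s)) \, \d s,
$$
viewed between appropriate spaces of continuous $\kY_k$-valued functions on $[0,T_*]$. The linearization $\partial_U \Psi$ is $\id$ minus a Volterra integral operator whose norm is $O(T_*)$, so it is invertible for $T_*$ small. For $j+k \leq N$, the hypothesis $B \in \kCb^{N-k}(\kD_k, \kY_k)$ supplies exactly $N-k \geq j$ derivatives of $B$ into the target $\kY_k$, so $\D_{U^0}^j \Phi^t$ exists, is bounded uniformly on $\kD_{K+1}^{-\delta} \times [0,T_*]$, and, after evaluation on $j$ directions in $\kY_{K+1}$, is jointly continuous as a map from $\kD_{K+1}^{-\delta} \times [0,T_*] \times (\kY_{K+1})^j$ into $\kY_k$; this is precisely what the underline in the notation $\kCb^{(\underline j, \ell)}$ asks for.

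For time regularity I would iterate the differentiated equation $\partial_t \Phi = A\Phi + B(\Phi)$. By \eqref{e.normA} and \textup{(B1)}, if $\Phi \in \kC([0,T_*]; \kY_k)$ then the right-hand side lies in $\kC([0,T_*]; \kY_{k-1})$, hence $\Phi \in \kC^1([0,T_*]; \kY_{k-1})$. Iterating and expanding $\partial_t^\ell B(\Phi)$ by Fa\`a di Bruno into sums of terms $(\D^i B)(\Phi)(\partial_t^{a_1}\Phi, \dots, \partial_t^{a_i}\Phi)$ with $a_1 + \cdots + a_i \leq \ell$ yields $\Phi \in \kC^\ell([0,T_*]; \kY_{k-\ell})$ whenever $\ell \leq k \leq K+1$. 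Combining this with the previous step, and commuting $\partial_{U^0}$ and $\partial_t$ by the standard symmetry of mixed partials, produces the joint regularity \eqref{e.T-diff-gen-unif}. The key accounting is that the total $B$-derivative order appearing in $\D_{U^0}^j \partial_t^\ell \Phi$ is at most $j + \ell$ and must map into $\kY_{k-\ell}$; since $(j+\ell) + (k-\ell) = j + k \leq N$, assumption \textup{(B1)} supplies exactly the required $B \in \kCb^{j+\ell}(\kD_{k-\ell}, \kY_{k-\ell})$.

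The main obstacle is this simultaneous bookkeeping across the full range of $(j, k, \ell)$: one must track which derivative of $B$ is used, which $\kY$-space it must take values in, and the requirement that the trajectory together with all intermediate variational quantities remain inside a suitably shrunk domain $\kD_{k-\ell}^{-\delta'}$ with $\delta'>0$ where those derivatives of $B$ are defined. The $\delta_*$-nestedness of $(\kD_k)$ together with the definition \eqref{e.Dk-delta} is precisely what makes all these constraints compatible at once, and it is the reason the conclusion is stated on $\kD_{K+1}^{-\delta}$ rather than on $\kD_{K+1}$ itself.
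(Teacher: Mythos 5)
The paper does not prove this theorem: it is quoted verbatim from the authors' earlier work \cite[Theorem~2.6 and Remark~2.8]{OliverWulffRK}, so there is no in-paper proof to compare against. Your outline --- contraction mapping on the mild formulation carried out uniformly along the scale $\kY_0,\dots,\kY_{K+1}$, implicit-function-theorem differentiation in the initial datum, and trading each time derivative for one rung via $\partial_t\Phi = A\Phi + B(\Phi)$ with the accounting $(j+\ell)+(k-\ell)=j+k\le N$ --- is precisely the strategy of that reference, and the one place you gloss over (why the data must be taken in $\kD_{K+1}^{-\delta}$ while the flow is only claimed to land in $\kD_K$, namely that uniform smallness of $\norm{\e^{tA}U^0-U^0}{\kY_k}$ over the data set requires one extra rung of regularity) is resolved correctly by the statement you arrive at, so the proposal is sound in outline.
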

 
\begin{remark}
In \cite{OliverWulffRK} we chose to shrink domains $\kD$ in the range
of the flow map to
\[
  \kD^{-\delta} 
  = \{ U \in \kD \colon \dist(U, \partial \kD) \geq \delta\}
\]
rather than work with extended domains $\kD^\delta$ in the argument of
the flow map as we do here.  Since $\kD^{\epsilon-\delta} \subset
(\kD^\epsilon)^{-\delta}$ for $\epsilon>\delta>0$, the formulation in
\cite{OliverWulffRK} implies the version stated here; working with
extended domains is more convenient for the purposes of this paper as
the extension preserves star-shapedness which is required in
Section~\ref{s.hamiltonian} below.
\end{remark}

\begin{remark} \label{r.superposition} 
The precise form of assumption (B1) is motivated by the typical case
where $B$ is a superposition operator of a function $f \colon D
\subset \R^d\to \R^m$ and $\kY_\ell$ is related to the standard
Sobolev space $\kH_\ell = \kH_\ell(I; \R^d)$, where $I =[a,b] \subset
\R$.  Then, if $f$ is $(N+1)$ times continuously differentiable on
some open set $D \subset \R^d$, it is $N$ times differentiable as a
map from the open set $\kD$ of $\kH_1$ to $\kH_1$.  We note that $u
\in \kD$ ensures that $u(x) \in D$ pointwise.  Moreover, $f$ is
$(N-k)$ times differentiable from $\kD \cap \kH_k$ to $\kH_k$---see
\cite[Theorem 2.12]{OliverWulffRK} and also \cite[Remark 7.4]{OWW04}.
\end{remark}

For the results of Section~\ref{s.bea}, we need regularity of the flow
on a space of Gevrey-regular functions as well.  Hence, we assume the
following.
 
\begin{itemize}
\item[(B2)] There exist $\tau>0$, $q>0$, $L \geq 0$, and a
$\kY_{\tau,L}$-bounded open set $\kD_{\tau,L} \subset \kD_{K+1}$ such
that $B \in \kCb^{2}(\kD_{\tau,L}^\delta, \kY_{\tau,L})$
\end{itemize}

We note that $\kY_{\tau,L} \subset \kY_{K+1}$ due to
Lemma~\ref{Le:GmHp}. In the following, $\kD_{\tau,L+1}$ refers to an
arbitrary $\kY_{\tau,L+1}$-bounded open subset of $\kD_{\tau,L}$.  We
note that under assumption (B2), Theorem~\ref{c.local-wp-unif} applies
with $\kY_{\tau,L}$ in place of $\kY$.


\subsection{Hamiltonian structures on Hilbert spaces}
\label{s.hamiltonian}

Our main result, Theorem~\ref{t.GenMain} below, requires that
\eqref{e.pde} is Hamiltonian, i.e., that there exists a symplectic
structure operator $\JJ$ and a Hamiltonian $H \colon \kD \to \R$ such
that
\begin{equation}
  \label{e.HamPDE}
  \partial_t U = AU + B(U) = \JJ \nabla H(U) \,.
\end{equation}
In addition to (A) and (B0), we assume the following.

\begin{itemize}
\item[(H0)] The symplectic structure operator $\JJ$ is a closed,
skew-symmetric, densely defined, and bijective linear operator on
$\kY$.

\item[(H1)] $A$ is skew-symmetric and $\JJ^{-1}A$ is bounded and
self-adjoint on $\kY$.

\item[(H2)] For every $U \in \kD^\delta$, the operator $\JJ^{-1} \D
B(U)$ is self-adjoint on $\kY$.

\item[(H3)] $\kD$ is star-shaped.

\end{itemize}
Formally, the function $H \colon\kD^\delta \to \R$ is an invariant of
the motion.
 
Recall that a subset $\kS$ of a linear space is star-shaped if there
exists $U_* \in \kS$ such that for every $W \in \kS$, the line segment
$U_*W$ is contained in $\kS$.  We then say that $\kS$ is star-shaped
with respect to $U_*$.  We remark that if $\kD$ is star-shaped with
respect to $U^* \in \kD$, then $\kD^\delta$ is star-shaped with
respect to $U^*$ as well.  Moreover, by the closed graph theorem, (H0)
implies that $\JJ$ is invertible with $\JJ^{-1} \in \kE(\kY)$.  This
implies, in particular, that $\JJ^{-1} \D B(U)$ is a bounded operator
on $\kY$ for every $U \in \kD^\delta$.

An operator $A$ is skew if $A^* = -A$ and $D(A)=D(A^*)$.  This implies
that $\spec(A) \subset \i \R$ and that, by Stone's theorem, $A$
generates a unitary $\kC^0$-group on $\kY$; see, e.g.\
\cite{ReedSimon}.  If $A=A_{\mathrm s} + A_{\mathrm b}$ where
$A_{\mathrm s}$ is skew and $A_{\mathrm b}$ is bounded, we can
redefine $B$ as $B+ A_{\mathrm b}$ and $A$ as $A_{\mathrm s}$ to
satisfy (H1).  This situation is typical for semilinear wave
equations, see Section~\ref{ss.swe}.
 
Further, by conditions (A), (H0), and (H1),
\begin{equation}
  \label{e.JAcommute}
  \JJ^{-1} A = (\JJ^{-1}A)^*
  = A^* (\JJ^{-1})^* = -A (-\JJ^{-1}) = A \JJ^{-1} \,.
\end{equation}
Hence, $A$ and $\JJ^{-1}$ commute, which also implies the following.

\begin{lemma} \label{Le:JPm}  
Assume \textup{(A)}, \textup{(H0)}, and \textup{(H1)}.  Then $\JJ^{-1}
\P_m = \P_m \JJ^{-1}$ for all $m \in \N_0$.
\end{lemma}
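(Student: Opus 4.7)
The idea is to pass from commutation with the unbounded normal operator $A$, which has already been established in \eqref{e.JAcommute}, to commutation with the bounded spectral projectors $\P_m$ via the Borel functional calculus for normal operators.

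First, I would recall that, by the closed graph theorem, assumption (H0) ensures that $\JJ^{-1}$ is a bounded operator on $\kY$, as already noted after the statement of the hypotheses. Next, by \eqref{e.JAcommute}, we have $\JJ^{-1}A = A \JJ^{-1}$ on $D(A)$, i.e. $\JJ^{-1}$ lies in the commutant of the unbounded normal operator $A$ in the sense that $\JJ^{-1}D(A) \subseteq D(A)$ and the two operators agree there.

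Second, since $A$ is normal by assumption (A), it admits a unique spectral resolution $E$, and the projector $\P_m$ is exactly $E(\spec A \cap \kB^{\C}_m(0))$, i.e. the Borel function $\chi_{\kB^\C_m(0)}$ applied to $A$ via the functional calculus. A standard result in spectral theory (see, e.g., Reed--Simon, \emph{Methods of Modern Mathematical Physics}, Vol.~I, Ch.~VIII) states that any bounded operator that commutes with a normal operator $A$, in the sense above, automatically commutes with every bounded Borel function of $A$; in particular, it commutes with each of the spectral projectors $E(\Omega)$. Applying this to $B = \JJ^{-1}$ and $\Omega = \spec A \cap \kB^\C_m(0)$ yields $\JJ^{-1} \P_m = \P_m \JJ^{-1}$ for every $m \in \N_0$.

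The only real subtlety is the passage from the commutation identity $\JJ^{-1}A = A \JJ^{-1}$, which involves an unbounded operator, to commutation with bounded functions of $A$; everything else is essentially notational. Since this passage is a textbook consequence of the Borel functional calculus, no further estimates are needed.
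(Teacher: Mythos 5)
Your proposal is correct and follows essentially the same route as the paper: both start from the commutation identity \eqref{e.JAcommute} and pass to the spectral projectors $\P_m = \chi_{\kB^\C_m(0)}(A)$ via the functional calculus for the normal operator $A$, the only difference being that the paper sketches the approximation of characteristic functions by analytic functions while you cite the standard Reed--Simon result directly.
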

 
\begin{proof}
By \eqref{e.JAcommute}, $A$ and $\JJ^{-1}$ commute, and so do $F(A)$
and $\JJ^{-1}$ for all analytic functions $F$.  Approximating
characteristic functions $\chi_{\Lambda}$ of measurable sets $\Lambda
\subset \C$ by analytic functions, we see that $\chi_{\Lambda}(A)$ and
$\JJ^{-1}$ also commute \cite{ReedSimon}.  With $\Lambda =
\kB^{\C}_m(0)$, this implies that $\chi_{\Lambda}(A) = \P_m$ commutes
with $\JJ^{-1}$.
\end{proof}

The existence of a Hamiltonian $H$ is then guaranteed by the following
integrability lemma.

\begin{lemma} \label{l.H2}
Assume \textup{(A)}, \textup{(B0)} and \textup{(H0--3)}.  Then there
exists an analytic bounded Hamiltonian $H \colon \kD^\delta \to \R$
for the evolution equation \eqref{e.pde}.
\end{lemma}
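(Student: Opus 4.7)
My plan is to carry out a Poincaré-lemma construction adapted to the Hilbert space setting. The central difficulty is that $A$ is unbounded, so the naive expression $\langle U - U_*, F(\gamma(t)) \rangle_\kY$ would contain the ill-defined pairing $\langle U-U_*, AV \rangle$ when $V \notin D(A)$. The key observation is that assumption (H1) provides a crucial replacement: $\JJ^{-1}A$ extends to a bounded self-adjoint operator on $\kY$. Together with (H0) and the closed graph theorem, this makes
\[
  \tilde{F}(U) := \JJ^{-1} A U + \JJ^{-1} B(U)
\]
a well-defined, analytic map from $\kD$ into $\kY$, whose Fr\'echet derivative
\[
  \D \tilde{F}(U) = \JJ^{-1} A + \JJ^{-1} \D B(U)
\]
is a bounded self-adjoint operator on $\kY$ for every $U \in \kD$, by (H1) and (H2).

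Next, using (H3), I pick a base point $U_* \in \ck \kD$ and define the candidate Hamiltonian
\[
  H(U) := \int_0^1 \langle U - U_*, \tilde{F}(U_* + t (U - U_*)) \rangle_\kY \, \d t.
\]
Since $\kD$ is star-shaped about $U_*$, the segment $\gamma(t) = U_* + t(U-U_*)$ lies in $\kD$ for $t \in [0,1]$, and $\tilde F \circ \gamma$ is continuous (in fact analytic) into $\kY$, so the integrand is a bounded real-valued continuous function of $t$ and the integral is well-defined. Analyticity of $H$ on $\kD$ follows from the analyticity of $\tilde F$ and standard results on parameter-dependent Bochner integrals.

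To verify the Hamiltonian property, I differentiate under the integral to obtain
\[
  \D H(U) W = \int_0^1 \bigl[ \langle W, \tilde F(\gamma(t)) \rangle + t \, \langle U - U_*, \D \tilde F(\gamma(t)) W \rangle \bigr] \, \d t.
\]
Using self-adjointness of $\D \tilde F(\gamma(t))$ and the identity $\frac{\d}{\d t} \tilde F(\gamma(t)) = \D \tilde F(\gamma(t))(U - U_*)$, the second term becomes $t \, \frac{\d}{\d t} \langle W, \tilde F(\gamma(t)) \rangle$. Integration by parts in $t$ then collapses the integral to $\langle W, \tilde F(U) \rangle$, so $\nabla H(U) = \tilde F(U) = \JJ^{-1}(AU + B(U))$ and hence $\JJ \nabla H(U) = AU + B(U)$, as required.

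Finally, for the regularity statement under (B1): taking $k=0$ yields $B \in \kCb^N(\kD_0, \kY)$, and since $\JJ^{-1} A \in \kE(\kY)$ is linear bounded, also $\tilde F \in \kCb^N(\kD_0, \kY)$. Because $\kD$ is bounded (H3), the integrand in the definition of $H$ and its $U$-derivatives up to order $N$ are uniformly bounded in $t$, so differentiation under the integral yields $H \in \kCb^N(\kD; \R)$. The only real obstacle is the unbounded-$A$ issue at the start; once (H1) is invoked to absorb $A$ into $\JJ^{-1}$, the remainder of the argument is a routine transcription of the finite-dimensional Poincaré lemma to the Hilbert space setting, with (H2) supplying the symmetry needed for the integration-by-parts step.
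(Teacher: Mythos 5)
Your proof is correct and follows essentially the same route as the paper: a line-integral (Poincar\'e-lemma) construction along segments of the star-shaped domain $\kD$, with the self-adjointness supplied by (H1)--(H2) providing exactly the symmetry needed for the integration-by-parts step, and (H0) plus the closed graph theorem making $\JJ^{-1}$ bounded. The only (inessential) difference is that the paper integrates the bounded linear part explicitly, writing $H(U) = \tfrac12 \langle U, \JJ^{-1} A U\rangle + V(U)$ and applying the line integral only to $\JJ^{-1}B$, whereas you fold $\JJ^{-1}AU$ into the same integral; this yields the same Hamiltonian up to an additive constant.
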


\begin{proof}
We seek a Hamiltonian of the form
\begin{equation}
  \label{e.EnergyPDE}
  H(U) = \frac12 \, \langle U, \JJ^{-1} A \, U \rangle + V(U) \,.
\end{equation}
Due to (H1), the quadratic part of the Hamiltonian is well-defined and
possesses the properties claimed.

To proceed, we use that $\kD^\delta$ is star-shaped and fix $U^0$ such
that $\kD$ and $\kD^\delta$ are star-shaped with respect to $U^0$.  We
set
\begin{equation}
  V(U) = \int_0^1 
  \langle \JJ^{-1} B (t\, U + (1-t) \, U^0), 
  U-U^0 \rangle \, \d t \,,
  \label{e.v2}
\end{equation}
so that, for $W \in \kY$, 
\begin{align}
  \langle \nabla V(U), W \rangle
  & = \int_0^1 
      \langle \JJ^{-1} B (t\, U + (1-t) \, U^0), W \rangle  \, \d t
    \notag \\
  & \quad
    + t \int_0^1
      \langle \JJ^{-1}\D B(t\,U + (1-t) \, U^0)W,U-U^0 \rangle \, \d t
    \notag \\
  & = \int_0^1 
      \frac{\d}{\d t} \langle t \, \JJ^{-1} B (t\, U + (1-t) \, U^0), W
      \rangle \, \d t \,,
  \label{e.HamComput}
\end{align}
where the last equality is due to the self-adjointness of $\JJ^{-1} \D
B(U)$.  Then, by the fundamental theorem of calculus, $B(U) = \JJ
\nabla V(U)$.  Further, \eqref{e.v2} shows that analyticity of $B$
implies analyticity of $V$, and uniform bounds on $B$ imply
corresponding uniform bounds on $V$.
\end{proof}

For bounds on the modified Hamiltonian in Section~\ref{s.bea} we will
need (H3) on at least two scale rungs, so that, for simplicity, we
assume the following.

\begin{itemize}
\item[(H4)] Each $\kD_k$ is star-shaped for $k = 0, \dots, K+1$.
\end{itemize}

In the next section we introduce concrete function spaces and
superposition operators on these spaces in order to verify that our
main examples, the nonlinear Schr\"odinger equation and the semilinear
wave equation, fit into our abstract framework.


\subsection{Spaces of analytic functions}
\label{ss.analyticFcts}

We denote the Fourier coefficients of a function $u \in \kL_2(\S^1;
\C^d)$ on the circle $\S^1 \simeq \R/(2\pi\Z)$ by $\hat{u}_k$, so that
\begin{equation}
  u(x) = \frac{1}{\sqrt{2\pi}} 
         \sum_{k\in\Z} \hat{u}_k \, {\e}^{{\i}kx} \,.
  \label{e.fourier}
\end{equation}
Let $\kG_{\tau,\ell} \equiv \kG_{\tau,\ell}(\S^1; \C^d)$ denote the
Hilbert space of analytic functions $u \in \kL_2(\S^1; \C^d)$ for
which
\[
  \norm[2]{u}{\kG_{\tau,\ell}}
  \equiv \langle u , u \rangle_{\kG_{\tau,\ell}} < \infty \,,
\]
where the inner product is given by
\begin{equation}
  \label{e.inner-product}
  \langle u, v \rangle_{\kG_{\tau,\ell}} 
  = \sum_{|k|\leq 1} \langle \hat{u}_k \, \hat{v}_k \rangle_{\C^d} 
    + \sum_{|k|>1} k^{2\ell} \, \e^{2\tau|k|} \, 
      \langle \hat{u}_k \, \hat{v}_k \rangle_{\C^d} \,.
\end{equation}
It can be shown that $\kG_{\tau,\ell}(\S^1; \C^d)$ contains all real
analytic functions whose radius of analyticity is at least $\tau$. In
particular, functions in $\kG_{\tau,\ell}(\S^1; \C^d)$ can be
differentiated infinitely often.  This follows from Lemma
\ref{Le:GmHp} with $\kY=\kL_2(\S^1; \C^d)$ and $A= \partial_x$.  We
write $\kH_\ell \equiv \kG_{0,\ell}$ to denote the usual Sobolev space
of functions whose weak derivatives up to order $\ell$ are
square-integrable.

The additional index $\ell$ in $\kG_{\tau,\ell}$ is important because
of the following.

\begin{lemma}[{\cite[Lemma 1]{Ferrari98}}] \label{l.algebra}
The space $\kG_{\tau,\ell}(\S^1; \C)$ is a topological algebra for
every $\tau \geq 0$ and $\ell > 1/2$. Specifically, there exists a
constant $c=c(\ell)$ such that for every $u, v \in
\kG_{\tau,\ell}(\S^1; \C)$ the product $uv \in \kG_{\tau,\ell}(\S^1;
\C)$ with
\begin{equation}\label{eq:uvGm}
  \norm{uv}{\kG_{\tau,\ell}(\S^1;\C)}
  \leq c \, \norm{u}{\kG_{\tau,\ell}(\S^1;\C)} \, 
            \norm{v}{\kG_{\tau,\ell}(\S^1;\C)} \,.
\end{equation}
\end{lemma}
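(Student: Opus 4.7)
The plan is to pass to Fourier space, where multiplication becomes convolution, and reduce the claim to a weighted Young-type inequality. Writing $\widehat{uv}_k = (2\pi)^{-1/2} \sum_{j \in \Z} \hat{u}_{k-j} \, \hat{v}_j$, we must estimate the weighted $\ell^2$ norm of this convolution by the weighted $\ell^2$ norms of $\hat{u}$ and $\hat{v}$, using the weight $w_k = (1+k^2)^{\ell/2} \, \e^{\tau|k|}$ (which is equivalent to the weight in \eqref{e.inner-product} on all of $\Z$, modulo the separate zero-mode contribution).

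The key algebraic step is subadditivity of the weight. From the triangle inequality $|k| \leq |k-j| + |j|$ we get immediately $\e^{\tau|k|} \leq \e^{\tau|k-j|} \, \e^{\tau|j|}$, and from $\max(|k-j|,|j|) \geq |k|/2$ one obtains a Moser-type bound
\[
  (1+k^2)^{\ell/2} \leq C_\ell \, \bigl( (1+(k-j)^2)^{\ell/2} + (1+j^2)^{\ell/2} \bigr) \,.
\]
Multiplying by $\e^{\tau|k|}$ and substituting into the convolution yields
\[
  w_k \, |\widehat{uv}_k|
  \leq \frac{C_\ell}{\sqrt{2\pi}}
       \sum_{j\in\Z} \bigl( w_{k-j}|\hat{u}_{k-j}| \, \e^{\tau|j|}|\hat{v}_j|
                           + \e^{\tau|k-j|}|\hat{u}_{k-j}| \, w_j|\hat{v}_j| \bigr) \,,
\]
i.e.\ $(w_k|\widehat{uv}_k|)_k$ is bounded by convolutions of an $\ell^2$ sequence with a sequence of the form $(\e^{\tau|j|}|\hat{u}_j|)_j$ or $(\e^{\tau|j|}|\hat{v}_j|)_j$.

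The final step is to invoke the discrete Young inequality $\|a * b\|_{\ell^2} \leq \|a\|_{\ell^2} \|b\|_{\ell^1}$ and to observe that the hypothesis $\ell > 1/2$ is exactly what turns the weighted $\ell^2$ norm into an $\ell^1$ norm on the unweighted Fourier coefficients: by Cauchy--Schwarz,
\[
  \sum_{j\in\Z} \e^{\tau|j|} |\hat{u}_j|
  \leq \biggl( \sum_{j\in\Z} (1+j^2)^{-\ell} \biggr)^{1/2} \,
       \biggl( \sum_{j\in\Z} (1+j^2)^\ell \, \e^{2\tau|j|} \, |\hat{u}_j|^2 \biggr)^{1/2}
  \leq C'_\ell \, \norm{u}{\kG_{\tau,\ell}} \,,
\]
since $\sum_j (1+j^2)^{-\ell}$ converges precisely when $2\ell > 1$. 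Combining these estimates yields \eqref{eq:uvGm}. The only technical point demanding care is the separate treatment of the zero Fourier mode, which is controlled by the $|\hat{u}_0|^2$ term in \eqref{e.inner-product}; no genuine obstacle arises there. The essential step, and the one that pinpoints the threshold $\ell > 1/2$, is the Cauchy--Schwarz bound embedding the weighted $\ell^2$ space into $\ell^1$.
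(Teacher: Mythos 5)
Your proof is correct, and it is essentially the standard argument: the paper itself does not prove this lemma but cites it from Ferrari--Titi, whose Lemma~1 rests on exactly the same ingredients you use --- subadditivity of the exponential weight $\e^{\tau|k|} \leq \e^{\tau|k-j|}\e^{\tau|j|}$, a Moser-type splitting of the polynomial weight, Young's convolution inequality, and the Cauchy--Schwarz embedding of the weighted $\ell^2$ space into $\ell^1$, which is where the threshold $\ell > 1/2$ enters. Nothing further is needed.
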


To treat general nonlinear potentials, we need to consider
superposition operators $f \colon \kG_{\tau,\ell} \to \kG_{\tau,\ell}$
of analytic functions.  The following lemma is a minor adaptation of
results proved in \cite{Ferrari98,Matthies01}.

\begin{lemma}\label{l.nonlinearityG^rl}
If $f \colon \C^d \to \C^d$ is entire then $f$ is also entire as a
function from $\kG_{\tau,\ell}(\S^1; \C^d)$ to itself for every $\tau
\geq 0$ and $\ell > 1/2$.  If $f$ is analytic on $\kB_{r}(u_0) \subset
\C^d$ where $u_0 \in \C^d$ then $f$ is analytic from $\kD_{\tau,\ell}
\equiv \kB_{R}(u_0)\subset \kG_{\tau,\ell}(\S^1;\C^d)$ to
$\kG_{\tau,\ell}(\S^1; \C^d)$ for any $R<r/c$, where $c=c(\ell)$ is
the constant from Lemma \ref{l.algebra}.  Moreover, $f \in
\kCb^2(\kD_{\tau,\ell};\kG_{\tau,\ell})$.
\end{lemma}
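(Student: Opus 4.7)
The plan is to reduce the lemma to an estimate on the Taylor expansion of $f$ combined with the Banach algebra property from Lemma \ref{l.algebra}.

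First I would handle the second (local) statement, since the entire case then follows by taking $r$ arbitrarily large. Write $z - u_0 = w \in \C^d$ and expand
\[
  f(z) = \sum_{\alpha \in \N_0^d} c_\alpha \, w^\alpha \,,
  \qquad c_\alpha = \frac{1}{\alpha!} \, \partial^\alpha f(u_0) \,,
\]
which converges on $\kB_r(u_0)$. Standard Cauchy estimates applied to a polydisk of appropriate radius inscribed in the ball yield a bound of the form $|c_\alpha| \leq M \rho^{-|\alpha|}$ for every $\rho < r$, where $M = M(\rho)$ is the supremum of $|f|$ on the corresponding set. In particular, $\sum_\alpha |c_\alpha| \, s^{|\alpha|} < \infty$ for every $s < r$, since the multiplicity of multi-indices of total degree $n$ grows only polynomially in $n$.

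The core step is then to lift this scalar Taylor series to a series in $\kG_{\tau,\ell}$. For $u \in \kG_{\tau,\ell}(\S^1; \C^d)$ with $\norm{u - u_0}{\kG_{\tau,\ell}} \leq R$, iterate Lemma \ref{l.algebra} componentwise to obtain
\[
  \norm{(u - u_0)^\alpha}{\kG_{\tau,\ell}}
  \leq c^{|\alpha|-1} \, \prod_{i=1}^d \norm{u_i - u_{0,i}}{\kG_{\tau,\ell}}^{\alpha_i}
  \leq c^{|\alpha|-1} \, R^{|\alpha|} \,,
\]
where $c = c(\ell)$ is the algebra constant and the $\C^d$-norm is chosen to be $\ell^1$ over components (any equivalent choice only affects the implicit constants). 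Summing and comparing with the scalar Taylor series,
\[
  \sum_\alpha |c_\alpha| \, \norm{(u - u_0)^\alpha}{\kG_{\tau,\ell}}
  \leq \frac{1}{c} \sum_\alpha |c_\alpha| \, (cR)^{|\alpha|} \,,
\]
which converges whenever $cR < r$. This proves that $F(u) = \sum_\alpha c_\alpha (u - u_0)^\alpha$ is absolutely convergent in $\kG_{\tau,\ell}$ and uniformly bounded on $\kB_R(u_0)$, and since it is given by an absolutely convergent power series on the open ball of a Banach space, it is automatically analytic there. A short pointwise identification shows that $F(u)(x) = f(u(x))$, so $F$ is indeed the superposition operator induced by $f$.

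For the entire case, simply observe that if $f$ is entire then the Cauchy estimates above hold for every $r > 0$, so the argument gives analyticity of the superposition operator on any ball $\kB_R(u_0) \subset \kG_{\tau,\ell}$, hence on all of $\kG_{\tau,\ell}$. The main technical nuisance I expect is bookkeeping in the multi-index estimates and making a clean choice of norms on $\C^d$ and $\kG_{\tau,\ell}(\S^1;\C^d)$ so that Cauchy estimates, the multinomial expansion, and the scalar algebra inequality of Lemma \ref{l.algebra} combine without picking up dimension-dependent constants that spoil the stated threshold $R < r/c$; however, the reference to \cite{Ferrari98,Matthies01} suggests that this is a minor adaptation rather than a conceptual obstacle.
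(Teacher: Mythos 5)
Your proposal is correct and follows essentially the same route as the paper: expand $f$ in its Taylor series about $u_0$, majorize the coefficients via Cauchy estimates, and apply the algebra inequality of Lemma~\ref{l.algebra} term by term to obtain absolute convergence of the lifted series in $\kG_{\tau,\ell}$ with the threshold $R<r/c$, from which analyticity and the uniform bound follow. The only difference is that you carry out the multi-index bookkeeping for $d>1$ explicitly (and honestly flag the dimension-dependent constants), whereas the paper writes the argument in the scalar form $f(z)=\sum_n a_n(z-u_0)^n$ with a one-variable majorant $\phi$ and leaves the vector-valued case implicit, so the issue you raise is not resolved there either.
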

 
\begin{proof} 
We prove the result for $d=1$; it generalizes to $d>1$ if
multi-indices are used.  Let $f$ be entire and let
\begin{equation}
  \label{e.fTaylor}
  f(z) = \sum_{n=0}^\infty \, a_n \, (z-u_0)^n \,
\end{equation}
be the Taylor series  of $f$ around $u_0 \in \C$. Let $\phi \colon \R
\to \R$ be its majorization
\[
  \phi(s) = \sum_{n=0}^\infty \, \lvert a_n \rvert \, s^{n} \,.
\]
By applying the algebra inequality \eqref{eq:uvGm} to each term of the
power series expansion \eqref{e.fTaylor} of $f(u)$, we see that the
series converges for every $u \in \kG_{\tau,\ell}$ provided $\tau \geq
0$ and $\ell > 1/2$, and that
\begin{equation}
  \norm{f(u)}{\kG_{\tau,\ell}}
  \leq c^{-1} \, \phi \bigl( c \, \norm{u - u_0}{\kG_{\tau,\ell}} \bigr) 
 + |a_0|(\sqrt{2\pi} - c^{-1}) \,,
 \label{e.majorant}
\end{equation}
where $c$ is as in Lemma~\ref{l.algebra}, see \cite{Ferrari98}.  In
other words, $f$ is entire on $\kG_{\tau,\ell}(\S^1)$.

When $f$ has only a finite radius of analyticity, we argue as follows.
Assume that $\lvert f(z) \rvert \leq M$ on $\kB_r^\C(u_0)$.  Then, by
Cauchy's estimate,
\[
  \lvert a_n \rvert \leq \frac{M}{r^n} \,.
\]
Consequently, the majorant $\phi$ is bounded on any $\kB_\rho^\C(0)$
with $\rho<r$ with uniform bound
\[
  \mu = \frac{M}{1 - \rho/r} \,.
\]
Due to \eqref{e.majorant}, the superposition operator $f$ is then
analytic and bounded by
\[
  M_{\text{spp}} = \mu/c + \lvert a_0 \rvert \, (\sqrt{2\pi} - c^{-1})  
\]
as a map from a ball $\kD_{\tau,\ell} = \kB_{R}(u_0)$ of radius
$R=\rho/c$ around $u_0 \in \kG_{\tau,\ell}$ into $\kG_{\tau,\ell}$;
similarly, we see that $f \in \kCb^2(\kD_{\tau,\ell};
\kG_{\tau,\ell})$.
\end{proof}

 
\subsection{Functional setting for the semilinear wave equation}
\label{ss.swe}

Consider the semilinear wave equation
\begin{equation}
  \label{e.swe}
  \partial_{tt}u = \partial_{xx} u - V'(u)
\end{equation} 
on $\S^1$.  Its Hamiltonian can be written
\[
  H(u,v) = \int_{\S^1}
  \biggl[
    \frac12 \, v^2 + \frac12 \, (\partial_x u)^2 + V(u)
  \biggr] \, \d x 
\]
where $v = \partial_t u$.  We write $U=(u,v)^T$ and set $\kY \equiv
\kH_1(\S^1;\R) \times \kL_2(\S^1;\R)$ so that the Hamiltonian is
well-defined on $\kY$.  For $U=(u,v) \in \kY$ let $\P_0 U = (\p_0 u,
\p_0 v)$ where, for $u \in \kL_2(\S^1;\R)$, we define $\p_0 u =
\hat{u}_0$ and let $\Q_0 = \id - \P_0$.  Setting
\[
  \tilde A =
  \begin{pmatrix}
  0 & \id \\ \partial^2_x & 0
  \end{pmatrix} \,,  
\]
we then define
\begin{equation}
  \label{e.AB}
  A = \Q_0 \tilde A \,, \qquad
  B(U) =
  \begin{pmatrix}
    0 \\ -V'(u)
  \end{pmatrix}
  + \P_0 \tilde A U \,,
\end{equation}
and the symplectic structure operator $\JJ$ via
\begin{equation}
  \label{e.J}
  \langle \JJ^{-1} U_1, U_2 \rangle_\kY 
  = \int_{\S^1} (u_1 v_2 - u_2 v_1) \, \d x
\end{equation}
for all $U_1 = (u_1,  v_1)^T, U_2 = (u_2, v_2)^T \in \kY$.
 
Since the Laplacian is diagonal in the Fourier representation
\eqref{e.fourier} with eigenvalues $-k^2$ for $k \in \Z$, the
eigenvalue problem for $A$ separates into $2\times 2$ eigenvalue
problems on each Fourier mode, and $\spec A = \i \Z \setminus \{0\}$.
Clearly, $A$ is skew-symmetric on $\kY$ if $\kH_1 = \kG_{0,1}$ is
endowed with the inner product \eqref{e.inner-product}.  Note that
$\P_0 \tilde A$ has a Jordan block and is hence included with the
nonlinearity $B$.  Thus, with
\[
  \kY_{\tau,\ell} = \kG_{\tau,\ell+1}(\S^1;\R) \times 
                    \kG_{\tau,\ell}(\S^1;\R) \,,
\]
$B(U)$ from \eqref{e.AB} satisfies (B2) with $q=1$.
 
The symplectic structure operator $\JJ$ defined by \eqref{e.J} is an
unbounded operator on $\kY_{\tau,\ell}$ with domain
$\kY_{\tau,\ell+1}$.  It is possible, though not necessary for
anything which follows, to compute $\JJ^{-1}$ explicitly.  Namely,
\eqref{e.J} reads
\begin{equation}
  \langle \JJ^{-1} U_1, U_2 \rangle_\kY 
  = \langle (\JJ^{-1} U_1)_u, u_2 \rangle_{\kH_1} + 
    \langle (\JJ^{-1} U_1)_v, v_2 \rangle_{\kL_2}
  = \int_{\S^1} (u_1 v_2 - u_2 v_1) \, \d x \,.
  \label{e.j-definition-wave}
\end{equation}
The definition of the inner product \eqref{e.inner-product} implies
\[
  \langle (\JJ^{-1} U_1)_u, u_2\rangle_{\kH_1} 
  = \langle (\p_0  -\partial_x^2) (\JJ^{-1} U_1)_u, u_2
    \rangle_{\kL_2} \,, 
\]
so that \eqref{e.j-definition-wave} splits into 
\[
  (\p_0 - \partial_x^2) (\JJ^{-1} U_1)_u = -v_1
  \qquad \text{and} \qquad
  (\JJ^{-1} U_1)_v = u_1 \,.
\]
We conclude that
\[
  \JJ^{-1} = 
  \begin{pmatrix}
    0 & -(\p_0-\partial^2_x)^{-1}\\
    1 & 0 
  \end{pmatrix} \,.
\]
If the potential $V \colon D \to \R$ is analytic on an open set $D
\subset \R$, then, by Lemma~\ref{l.nonlinearityG^rl}, $B$ is analytic
from $\kB_{R}^{\kY{\tau,\ell}}(U^0)$ to $\kY_{\tau,\ell}$ for any
$\tau,\ell \geq 0$ and $U^0 = (u^0,v^0) \in \kY_{\tau,\ell}$ with
$u^0$ independent of $x$, provided $\kB_r^{\R}(u^0) \subset D$ with $R
< r/c(\ell)$ as in Lemma~\ref{l.nonlinearityG^rl}.  In this setting,
all the above assumptions are satisfied.
 

\subsection{Functional setting for the nonlinear Schr\"odinger
  equation}
\label{ss.nse}

Consider the nonlinear Schr\"odinger equation
\begin{equation}
  \label{e.nse}
  \i \, \partial_t u 
  = - \partial_{xx} u + \partial_{\overline{u}} V(u,\overline{u})
\end{equation}
on the circle $\S^1$, where $V(u,\overline{u})$ is analytic in $\Re
u$ and $\Im u$.  Setting $U \equiv u$, we can write
\begin{equation}
  \label{e.nlsdefs}
  A = \i \, \partial^2_x \,,  \quad 
  B(U) = -\i \, \partial_{\overline{u}} V(u,\overline{u}) \,. 
\end{equation}
Similar to \eqref{e.J}, we have
\begin{equation}
  \label{e.J_nls}
 \langle \JJ^{-1} u_1, u_2 \rangle_\kY 
  = \int \Re(\i u_1 \bar{u}_2) \, \d x
\end{equation}
with $\kY = \kH_1(\S^1,\C)$.  Therefore, as for the semilinear wave
equation in Section~\ref{ss.swe}, we see that $\JJ^{-1} \colon
\kH_2\to \kL_2$ and that (H0--1) hold.  The first term of the
Hamiltonian
\begin{equation}
  \label{e.HamiltonianNSE}
  H(U) = \frac{1}{2} \int_{\S^1} 
         \bigl( 
           \lvert \partial_x u \rvert^2 + V(u,\overline{u})
         \bigr) \, \d x
\end{equation}
is then well-defined for all $u \in \kY$.  The Laplacian is diagonal
in the Fourier representation \eqref{e.fourier} with eigenvalues
$-k^2$.  Hence, $ \spec A = \{ -\i k^2 \colon k \in \Z\}$ so that $A$
generates a unitary group on $\kL_2(\S^1;\C)$ and, more generally, on
every $\kG_{\tau,\ell}$ with $\ell \in \N_0$ and $\tau \geq 0$.

To continue, we identify $\R^2 \simeq \C$ so that $\kY
=\kH_1(\S^1,\R^2)$.  If the potential $V \colon D \subset \R^2 \to \R$
is analytic as a function of $(q,p) \equiv (\Re u, \Im u)$, then, by
Lemma~\ref{l.nonlinearityG^rl}, the nonlinearity $B(U)$ defined in
\eqref{e.nlsdefs} is analytic as map from a ball in
$\kG_{\tau,\ell}(\S^1;\R^2)$ to itself for every $\tau \geq 0$ and
$\ell > 1/2$ .  The construction of the domain hierarchy works as in
Section~\ref{ss.swe}, so that (B0--2) hold with $\kY_{\tau,\ell} =
\kG_{\tau,2\ell+1}(\S^1;\R^2)$ where $\ell \in \N_0$ and $q=2$.

We remark that if we were to write out the nonlinear Schr\"odinger
equation in real coordinates with $U = (\Re u, \Im u)$, the structure
operator $\JJ$ would be the canonical symplectic matrix on $\R^2$ for
the space $\kL_2(\S^1;\R^2)$.

An example of a nonlinear Schr\"odinger equation in Fourier space
which is defined on a more complicated set than a ball can be found in
Remark~\ref{r.nonlocal} below.


\section{A-stable Runge--Kutta methods on Hilbert spaces}
\label{s.rk}

In this section, we introduce a class of A-stable Runge--Kutta methods
which are well-defined when applied to the semilinear PDE
\eqref{e.pde} under assumptions (A) and (B0), and review some
regularity and convergence results for those methods from
\cite{OliverWulffRK}.  In most of this section, we need not assume
that \eqref{e.pde} is Hamiltonian.

Applying an $s$-stage Runge--Kutta method of the form \eqref{eq:RK} to
the semilinear evolution equation \eqref{e.pde}, we obtain
\begin{subequations} \label{eq:RK}
\begin{align}
  W & = {U}^0 \, \1 + h \, {\a} \, F(W)  \,,
   \label{eq:RKStagesIter} \\
 \Psi^h(U^0) & = U^0 + h \, {\b}^T \, F( W)  \,. 
   \label{eq:RKUpdate}
\end{align}
\end{subequations}
For $U\in \kY$, we write
\[
  \1 \, U = 
  \begin{pmatrix}
    U \\ \vdots \\ U
  \end{pmatrix} \in \kY^s \,,
  \quad
  W = 
  \begin{pmatrix}
    W^1 \\ \vdots \\ W^s
  \end{pmatrix} \,,
  \quad
  {B}(W) = 
  \begin{pmatrix}
    B(W^1) \\ \vdots \\ B(W^s)
  \end{pmatrix} \,,
\]
where $W^1, \dots, W^s$ are the stages of the Runge--Kutta method,
\[
  (\a W)^i = \sum_{j=1}^s \a_{ij} \, W^j \,,
  \qquad
  \b^T W  = \sum_{j=1}^s \b_{j} \, W^j \,,
\]
and $A$ acts diagonally on the stages, i.e., $({A}W)^i = A W^i$ for
$i=1,\dots,s$.  

The scheme \eqref{eq:RK} can be written in a more suitable
form, required later, namely
\begin{subequations}
\begin{equation}
  \label{eq:newRKStagesIter}
  W = \Pi(W; U, h) 
    \equiv (\id - h  {\a} {A} )^{-1} \, 
    (\1 U + h  {\a}  {B}(W)) 
\end{equation}
and  
\begin{align}
  \Psi^h(U)  = \mS(hA) U + h {\b}^T \, (\id - h \a A)^{-1} \, B(W(U,h)) \,,
  \label{eq:NewRKUpdate}
\end{align}
\end{subequations}
where $S$ is the so-called \emph{stability function}
\begin{equation}
  \label{e.SzRK}
  \mS(z) = 1 + z \b^T \, (\id - z \a)^{-1} \, \1 \,.
\end{equation}

We now make a number of assumptions on the method and its interaction
with the linear operator $A$.  First, we assume that the method is
A-stable.  Setting $\C^- = \{ z \in \C \colon \Re z \leq 0\}$, the
conditions are as follows.
\begin{itemize}
\item[(RK1)] The stability function \eqref{e.SzRK} is bounded with
$\vert \mS(z) \rvert \leq 1$ for all $z \in \C^-$.

\item[(RK2)] The $s \times s$ matrices $\id - z \a$ are invertible for
all $z \in \C^-$.
\end{itemize}
We also require two further conditions.
\begin{itemize}
\item[(RK3)] The matrix $\a$ is invertible.
\item[(RK4)] The method is symplectic.
\end{itemize}
Recall that the flow map of a Hamiltonian system is a symplectic map,
i.e., $\Phi^t$ satisfies
\begin{equation}
  \label{e.symplecticity}
  (\D_U \Phi^t(U))^T \, \JJ^{-1} \, \D_U \Phi^t(U) 
  = \JJ^{-1}
\end{equation}
for all $U$ and $t$ for which this relation is well defined \cite{MR}.
A numerical one-step method is called symplectic if, when applied to a
Hamiltonian system, its time-$h$ map $\Psi^h$ is symplectic.

It is known that a Runge--Kutta method of the form \eqref{eq:RK} is
symplectic when its coefficients satisfy
\[
  \b_i \, \a_{ij} + \b_j \, \a_{ji} - \b_i \, \b_j = 0 
\]
for $i,j = 1, \dots, s$; see, for example, \cite{Sanz-SernaCalvo}.  The
simplest example of a symplectic Runge--Kutta method is the
\emph{implicit midpoint rule}, given by
\[
 \Psi^h(U)= U + h \, F \Bigl(\frac{U+ \Psi^h(U)}2 \Bigr) \,,
\]
which, equivalently, can be written in the form of a general
Runge--Kutta scheme \eqref{eq:RK} with $s=1$, $\a_{11}= \frac{1}{2}$,
and $\b_1 = 1$.  This is an example of a Gauss--Legendre Runge--Kutta
method; Gauss--Legendre Runge--Kutta methods satisfy conditions
\textup{(RK1--4)}; see \cite[Lemma 3.6]{OliverWulffRK} for condition
(RK1--3) and, e.g., \cite{Sanz-SernaCalvo} for (RK4).

In the following, we also need to refer to a set of key estimates on
the linear operators which appear in the formulation
\eqref{eq:newRKStagesIter} and \eqref{eq:NewRKUpdate} of the
Runge--Kutta method, namely
\begin{subequations} \label{e.RKShA}
\begin{gather}
  \norm{(\id - h\a A)^{-1}}{\kY^s \to \kY^s} \leq \Lambda \,,  
  \label{e.RKShA.a} \\
  \norm{h \a A (\id - h \a A)^{-1}}{\kY^s \to \kY^s} \leq 1 + \Lambda \,,
  \label{e.RKShA.b} \\ 
  \norm{\mS(hA)}{\kY^s \to \kY^s} \leq 1 + \sigma \, h
  \label{e.RKShA.c} 
\end{gather}
\end{subequations}
for all $h \in [0,h_*]$ and constants $\Lambda \geq 1$ and $\sigma\geq
0$.  These estimates naturally hold true on each rung of our hierarchy
of spaces.  For proofs, see \cite[Section~3.2]{OliverWulffRK}.
 
A-stable Runge--Kutta methods have the remarkable property that their
time-$h$ map is of the same regularity class as the flow of the
evolution equation stated in Theorem~\ref{c.local-wp-unif}.  We state
this result as an abbreviated version of \cite[Theorem~3.15 and
Remark~3.17]{OliverWulffRK}.
 
\begin{theorem}[Regularity of numerical method]
\label{t.h-diff-unif}
Assume \textup{(A)}, \textup{(B1)}, and \textup{(RK1--3)}.  Then there
exists $h_{*}>0$ such that the components $W^j$ of the stage vector
$W(U,h)$ and numerical method $\Psi(U,h)=\Psi^h(U)$ are of class
\eqref{e.T-diff-gen-unif} with $T_*$ there replaced by $h_*$ here.
Moreover, $\Psi$ and $W^j$ map into $\kD_K^\delta$.  The bounds on
$W$, $\Psi$ and $h_*$ only depend on the bounds afforded by
\textup{(B1)} and \eqref{e.Rk}, on the coefficients of the method, on
the constants afforded by \eqref{e.RKShA} and on $\delta$.
\end{theorem}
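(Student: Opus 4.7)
The plan is to mirror the argument for the semiflow in Theorem~\ref{c.local-wp-unif}, replacing the mild formulation \eqref{e.mild} by the implicit fixed-point equation \eqref{eq:newRKStagesIter}. The general strategy is to apply the contraction mapping principle with parameters on each rung of the scale $\kY_0, \ldots, \kY_{K+1}$, and then to differentiate the resulting fixed-point relation by the implicit function theorem, keeping careful track of how many scale rungs are consumed by each derivative in $h$.

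First I would establish existence and $\kY_k$-boundedness of the stages $W(U,h)$ together with their mapping properties. For $U \in \kD_{K+1}^{-\delta}$, restrict $\Pi(\,\cdot\,;U,h)$ to the closed ball $\kB_\delta^{\kY_k^s}(\1 U)$ so that $W \in \kD_k^s$. Writing $(\id-h\a A)^{-1} = \id + h\a A(\id-h\a A)^{-1}$ and using \eqref{e.RKShA.a}, \eqref{e.RKShA.b} together with the boundedness of $B\colon \kD_k \to \kY_k$ from (B1), one finds that $\norm{\Pi(W;U,h)-\1 U}{\kY_k^s}$ and the Lipschitz constant of $\Pi(\,\cdot\,;U,h)$ on the ball are both of order $h$. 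Hence for $h \in (0,h_*]$ with $h_*$ small enough, $\Pi(\,\cdot\,;U,h)$ is a contraction with unique fixed point $W(U,h)\in \kD_k^s$; the update \eqref{eq:NewRKUpdate} together with \eqref{e.RKShA.c} then places $\Psi^h(U)$ in $\kD_K$.

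Differentiability in $U$ follows from the implicit function theorem applied to $W = \Pi(W;U,h)$. Since $\D_W \Pi = h(\id - h\a A)^{-1}\a\,\D B(W)$ has norm of order $h$ by (B1) and \eqref{e.RKShA.a}, the operator $\id - \D_W\Pi$ is invertible on $\kY_k^s$ uniformly in $(U,h)$ for $h_*$ small. Iterating, $W$ and $\Psi$ inherit from $B$ the regularity $\kCb^{N-k}$ in $U\in \kD_{K+1}^{-\delta}$ as maps into $\kY_k$; this delivers the $(\underline{j},0)$ component of \eqref{e.T-diff-gen-unif}.

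The main obstacle, as for the semiflow, is differentiability in $h$. Each $h$-derivative of $\Pi$ produces either a factor $AW$ or a factor of the form $\a A(\id - h\a A)^{-1}$ acting on a quantity already controlled at the next-higher rung, so that one scale rung is lost per $h$-derivative. This forces an induction on $\ell$: assuming that $W$ and $\Psi$ are of class $\kCb^{(\underline{j},\ell-1)}$ into $\kY_{k-\ell+1}$ over the admissible index range, differentiate the fixed-point equation $\ell$ times in $h$, solve the resulting linear equation for $\partial_h^\ell W$ using the already-established invertibility of $\id - \D_W\Pi$, and read off that $\partial_h^\ell W \in \kY_{k-\ell}^s$. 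The restriction $\ell \le k \le K+1$ and $j+k \le N$ ensures the induction stays inside the hierarchy on which (B1) provides sufficient smoothness of $B$; the $\delta_*$-nestedness of the $\kD_k$ guarantees that the shrinkage from $\kD_{K+1}^{-\delta}$ into $\kD_K$ required by the scale descent is admissible. All constants, including $h_*$, depend only on the quantities listed in the statement, namely the bounds afforded by (B1), \eqref{e.Rk}, the Runge--Kutta coefficients, the A-stability constants $\Lambda,\sigma,c_\mS$ from \eqref{e.RKShA}, and $\delta$, since no other ingredients enter the contraction and implicit-function estimates above.
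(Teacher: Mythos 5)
The paper contains no proof of Theorem~\ref{t.h-diff-unif}: it is explicitly imported as an abbreviated version of \cite[Theorem~3.15 and Remark~3.17]{OliverWulffRK}, so there is no in-paper argument to measure your proposal against. That said, your outline is consistent with the strategy of the cited work and with the analogous semiflow result, Theorem~\ref{c.local-wp-unif}: a parameter-dependent contraction for the stage equation \eqref{eq:newRKStagesIter} on each rung of the scale, the implicit function theorem for the $U$-derivatives, and an induction on the number of $h$-derivatives in which each derivative costs one rung. As a reconstruction of the omitted argument, the plan is sound.

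Two steps are stated more glibly than they can be executed. First, the claim that $\norm{\Pi(W;U,h)-\1 U}{\kY_k^s}=O(h)$ does not follow from \eqref{e.RKShA.b} alone: the term $h \a A(\id-h\a A)^{-1}\1 U$ is only $O(1)$ on $\kY_k^s$ for generic $U\in\kY_k$, and becomes $O(h)$ only after rewriting it as $h\a(\id-h\a A)^{-1}A\1 U$ and using $\norm{A\1 U}{\kY_k^s}\leq\norm{\1 U}{\kY_{k+1}^s}$, i.e., using that $U$ sits one rung higher. This is precisely why the hypothesis places $U$ in $\kD_{K+1}^{-\delta}$ while the stages and $\Psi^h(U)$ are only asserted to land in $\kD_K$; you exploit this implicitly but should make it the explicit mechanism limiting the contraction step to $k\leq K$. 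Second, the class \eqref{e.T-diff-gen-unif} includes the case $k=K+1$, $\ell=0$, i.e., boundedness of $W^j$ and $\Psi$ with values in $\kY_{K+1}$, which cannot be reached from boundedness of $B$ into $\kY_K$ without the smoothing estimate $\norm{h\a(\id-h\a A)^{-1}}{\kY^s\to\kY_1^s}\leq 2+\Lambda$ (the one invoked later in the proof of Lemma~\ref{l.projection-error}, and requiring (RK3)); your argument as written only controls the $\kY_K$ norm of the update. Neither point undermines the approach, but both must be supplied to recover the full statement.
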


\begin{remark} \label{rem:1IMPR}
Even when $B \colon \kY \to \kY$ is analytic, the numerical time-$h$
map $\Psi^h(U)$ is generally not analytic in $h$ unless $A$ is
bounded.  Take, for example, the linear Schr\"odinger equation, i.e.,
equation \eqref{e.nse} with $B \equiv 0$, discretized by the implicit
mid point rule.  Then
\[
  h \mapsto \mS(hA) \, e_k 
  = (\id+ \tfrac12 h A) \, (\id - \tfrac12 h A)^{-1} \, e_k 
  = \frac{1 + \tfrac12 h k^2 \i}{1 - \tfrac12 h k^2 \i} \, e_k
\]
has radius of analyticity $\frac{2}{k}$, where $e_k$ is the $k$-th
Galerkin mode of $A$ as described in Section~\ref{ss.nse}.  Therefore,
if the Fourier expansion of $U$ does not terminate finitely, then
$\Psi^h(U) =\mS(hA) U$ cannot be analytic in $h$.  This argument
applies to any A-stable Runge--Kutta method: Since $\lvert \mS(z)
\rvert \leq 1$ for all $z \in \i\R$ by assumption (RK1), the stability
function is a rational polynomial $\mS(z) = \mP(z)/\mQ(z)$ with $\deg
\mQ \geq \deg \mP$.  Hence, $\deg \mQ \geq 1$ so that $\mS(z)$ has at
least one pole $z_0$.  The radius of analyticity of $\mS(z)$ around
$0$ is $r_0 = \lvert z_0 \rvert$, so that $h \mapsto \mS(hA)e_k$
cannot be analytic outside a ball around $h=0$ of radius $r_0/k$.  As
for the implicit midpoint rule, this implies that $\Psi^h(U)$ is not
analytic in $h$ unless the Fourier expansion of $U$ is finite.
\end{remark}

Thus, while differentiability in $h$ can be obtained by stepping down
on a scale of Hilbert spaces, analyticity can only be obtained by
projecting onto a subspace on which the vector field is bounded.  This
will become necessary in Section~\ref{ssec:embedFlow} where
analyticity is essential for obtaining exponential error estimates.


\section[Exponentially accurate embeddings of
time-semidiscretizations]{Exponentially accurate Hamiltonian
embeddings\\of time-semidiscretizations}
\label{s.bea}

We are now ready to state and prove our main result on approximate
embeddings of symplectic time-discretizations of Hamiltonian evolution
equations into flows.

\subsection{Statement of the main result}
\label{ss.mainResult}

In the following, we write $j=\lfloor r \rfloor$ to denote the largest
integer $j\leq r$, and $j=\lceil r \rceil$ to denote the smallest
integer $j \geq r$.

\begin{theorem}[Main theorem] \label{t.GenMain}
Assume that the semilinear Hamiltonian evolution equation
\eqref{e.HamPDE} with energy \eqref{e.EnergyPDE} satisfies
\textup{(A)}, \textup{(B0--2)}, and \textup{(H0--4)}.  Apply a
symplectic Runge--Kutta method of order $p \geq 1$ and step size $h$
which satisfies \textup{(RK1--4)} to \eqref{e.HamPDE}.  Assume further
that  
\begin{equation*}
  K+1 \geq P \equiv \lceil p(q+1)^2/q + q \rceil + 1
\end{equation*}
with $K$ from \textup{(B1)} and $q$ from \textup{(B2)}.  Then there
exists $h_*>0$ and a modified energy $\tilde{H} \colon
\kD_{1}^{\delta/2} \times [0,h_*] \to \R$ which is analytic in $U$ for
each $h \in [0,h_*]$ and satisfies
\begin{subequations}
\begin{equation}
  \label{e.HtildeHClose}
  \sup_{U \in \kD_{P}}
  \lvert \tilde{H}(U,h) - H(U,h) \rvert
   = O(h^p) 
\end{equation}
such that $\JJ \nabla \tilde H$ generates a modified flow $\tilde \Phi
\colon \kD_1 \times [0,h_*] \to \kD_1^{\delta/4}$.  The numerical
method is approximately embedded into the modified flow with
exponentially small error in the sense that there is $c_*>0$ such that
\begin{equation}
  \sup_{U \in \kD_{\tau,L+1}}  
  \norm{\Psi^h(U) - \tilde \Phi^h (U)}{\kY_1}
  \leq c_{\tilde\Phi} \, \e^{-c_* h^{-\frac{1}{1+q}}} \,.
  \label{e.hilbert-embedding2}
\end{equation}
The modified energy is also approximately conserved by the numerical method:
\begin{equation}
  \label{e.onestepHamExpEst}
  \sup_{U \in \kD_{\tau,L+1}} 
    \lvert \tilde{H}(\Psi^h(U),h) - \tilde{H}(U,h) \rvert
  \leq c_{\tilde H} \, \e^{-c_* h^{-\frac{1}{1+q}}} \,.
\end{equation}
\end{subequations}
\end{theorem}
 
For the semilinear wave equation, $q=1$ (see Section~\ref{ss.swe}), so
that the exponents in \eqref{e.hilbert-embedding2} and
\eqref{e.onestepHamExpEst} scale like $h^{-1/2}$.  For the nonlinear
Schr\"odinger equations, $q=2$ (see Section~\ref{ss.nse}), so that the
exponents in \eqref{e.hilbert-embedding2} and
\eqref{e.onestepHamExpEst} scale like $h^{-1/3}$.  Note that, due to
(B2), $\kD_{\tau,L+1} \subset \kD_{K+1} \subset \kD_{P}$, so that the
supremum in \eqref{e.HtildeHClose} can be taken, in particular, over
$\kD_{\tau,L+1}$.

\begin{remark} \label{r.expAllTimes} For ODEs, estimate
\eqref{e.onestepHamExpEst} holds with $q=0$ and implies approximate
conservation of the energy $H$ over exponentially long times so long
as the numerical trajectory remains bounded.  For PDEs this conclusion
does not hold, because solutions of semilinear PDEs and their
discretizations are not generally Gevrey regular over long times,
while Gevrey regularity is needed for the embedding estimate
\eqref{e.hilbert-embedding2}, cf.\ Section~\ref{s.LowerEst}.
\end{remark}
 
\begin{remark}
When the evolution equation \eqref{e.HamPDE} is linear, e.g., a linear
wave equation or linear Schr\"odinger equation, its Hamiltonian is
conserved exactly since symplectic Runge--Kutta methods conserve
quadratic invariants \cite{HaLu2002}.
\end{remark}
 
\begin{remark} \label{r.FiniteTime} 
Our result implies conservation of the modified energy with
exponentially small error over finite times under slightly stronger
conditions.  We assume that there is a triple of Gevrey spaces as
follows.
\begin{itemize}
\item[(B3)] There are $\tau>0$, $q>0$, $L \geq 0$, and a sequence of
nested $\kY_{\tau,L+k}$-bounded and open sets $\kD_{\tau,L+k}$ such
that $B \in \kCb^{3-k}(\kD_{\tau,L+k}^\delta, \kY_{\tau,L+k})$ for $k
=0, 1, 2$.
\end{itemize} 
Let $\kD_{\tau,L+3} \subset \kD_{\tau,L+2}$ be an open and bounded
subset of $\kY_{\tau,L+3}$.  Fix $T>0$ and $\delta>\varepsilon>0$.
Then for any $U^0$ with
\begin{equation}
  \label{e.beaSolnCond}
  \{\Phi^t(U^0) \colon t  \in [0,T] \} 
  \subset \kD_{\tau,L+3}
\end{equation}
and for any $h\in[0,h_*]$, the convergence theorem \cite[Theorem
3.20]{OliverWulffRK} with $p \equiv 1$, with $\kY_{\tau,L+1}$ in place
of $\kY$, ensures that there is $h_*>0$ such that for any $h\in
(0,h_*]$, the discrete trajectory $U^j=(\Psi^h)^j(U^0)$ is
$O(h)$-close to the flow in the $\kY_{\tau,L+1}$ norm and hence
satisfies $U^j \in \kD_{\tau,L+1}^\varepsilon$ so long as $0 \leq j
\leq \lfloor T/h \rfloor$ and $h>0$ is sufficiently small.
Theorem~\ref{t.GenMain} with $\kD_{\tau,L+1}$ replaced by
$\kD_{\tau,L+1}^\varepsilon$, $\kD_j$ replaced by $\kD_j^\varepsilon$,
and $\delta$ reduced to $\delta-\epsilon$ then implies approximate
conservation of the modified energy $\tilde H$ with an error
\[
  h^{-1} \, O(\e^{-c_* h^{-\frac{1}{1+q}}}) 
  = O(\e^{-\beta h^{-\frac{1}{1+q}}})
\]
for any $\beta \in (0,c_*)$ with order constants uniform over all
$U^0$ satisfying \eqref{e.beaSolnCond}.
\end{remark}

The remainder of the section is devoted to the proof of
Theorem~\ref{t.GenMain}, where claims
(\ref{e.HtildeHClose}--\ref{e.onestepHamExpEst}) correspond to
Lemma~\ref{l.H-TildeH}, Lemma \ref{l.gevrey-embedding} and
Lemma~\ref{l.gevrey-hamiltonian}, respectively.

Lemma~\ref{l.gevrey-embedding} generalizes the well known embedding
result for ODEs, stated as Theorem~\ref{t.embedding} below, to the
Hilbert space setting.  Theorem~\ref{t.embedding} is not directly
applicable to PDEs because the formal expansion in $h$ of the
numerical method contains powers of the unbounded operator $A$.  We
thus resort to the following construction, which is also used in
\cite{MaSch2001}: In Section~\ref{s.truncation}, we truncate the
evolution equation \eqref{e.HamPDE} to the subspace $\P_m \kY$.  Then,
in Section~\ref{ssec:embedFlow}, we obtain an embedding result on this
subspace and choose an optimal cut-off $m$ as function of $h$ to
obtain the embedding result in the Hilbert space setting.  Finally, in
Section~\ref{ssec:formalBEA}, we prove estimate
\eqref{e.HtildeHClose}.


\subsection{Galerkin truncation}
\label{s.truncation}

For given $m \in \N$, we define a truncated Hamiltonian evolution
equation by restricting the Hamiltonian phase space to the subspace
$\P_m \kY$.  Since $\nabla H \vert_{\P_m \kY} = \P_m \nabla H$ and
$\JJ^{-1}$ leaves $\P_m \kY$ invariant by Lemma~\ref{Le:JPm}, the
corresponding restricted evolution equation reads
\[
  \dot u_m = \JJ \, \P_m \nabla H(u_m) \,.
\]
Thus, setting $f_m = \P_m F$ and $B_m=\P_m B$, we
can write
\begin{equation} 
  \label{e.slwe-m}
  \dot u_m \equiv f_m(u_m) = A u_m + B_m(u_m) \,.
\end{equation}
We denote the flow of the projected system on $\P_m \kY$ by
$\phi^t_m$.  For convenience, we set $\Phi^t_m = \phi^t_m \circ \P_m$.
Similarly, let $w_m$ denote the stage vector, $w_m^j$ for $j=1, \dots,
s$ its components, and $\psi_m^h$ denote the numerical time-$h$ map
obtained by applying an $s$-stage Runge--Kutta method to the projected
semilinear evolution equation \eqref{e.slwe-m}, and abbreviate $W_m^j
= w_m^j \circ \P_m$ and $\Psi_m^h = \psi_m^h \circ \P_m$.

In \cite{OliverWulffGalerkin}, we proved that all of the maps
above---the truncated flow $\Phi^t_m$, the components of the stage
vector $W_m^j$, and the time-$h$ map $\Psi_m^h$ of the truncated
system---are of the same class \eqref{e.T-diff-gen-unif} as the exact
flow $\Phi^t$ with $m$-independent bounds.  The precise statement is
as follows.
  
\begin{theorem}[Regularity of flow and numerical method of projected
system] \label{t.wm_psim-uniform}
Assume \textup{(A)}, \textup{(B1)}, and \textup{(RK1--3)}.  Then there
are positive $T_*$, $h_*$, and $m_*$ such that for every $m \geq m_*$
the flow $\Phi^t_m$ is of class \eqref{e.T-diff-gen-unif}, and the
components of the numerical stage vector $W_m^j$ and the numerical
time-$h$ map $\Psi^h_m$ are of the same class, but with $T_*$ replaced
by $h_*$, with bounds which are independent of $m \geq m_*$.
Moreover, $\Phi^t_m$, $W_m^j$, and $\Psi_m^h$ map $\kD_{K+1}$ into
$\kD_K^\delta$.
\end{theorem}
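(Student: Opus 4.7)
The plan is to view the Galerkin-projected equation \eqref{e.slwe-m} as itself a semilinear evolution equation of the form \eqref{e.pde} on the full Hilbert space $\kY$, with the same linear part $A$ but with nonlinearity $B_m = \P_m B$, and then apply Theorem~\ref{c.local-wp-unif} and Theorem~\ref{t.h-diff-unif} to it in such a way that the constants come out independent of $m$. The two ingredients that make uniformity work are (i) $\P_m$ commutes with $A$ by assumption (A) and is an orthogonal projection, so it has operator norm at most $1$ on every $\kY_k$; and (ii) by \eqref{e.qm-estimate} (with $\tau=0$), $\norm{\Q_m U}{\kY_k}\leq m^{-1}\norm{U}{\kY_{k+1}}$, so $\P_m U\to U$ in $\kY_k$ uniformly over $\kY_{k+1}$-bounded sets.

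First I would verify that $B_m$ satisfies \textup{(B1)} with $m$-independent constants on slightly shrunken domains. Using the norm-$1$ bound on $\P_m$, one gets $\norm{B_m(U)}{\kY_k}\leq \norm{B(U)}{\kY_k}$ and analogously for all derivatives $\D^j B_m=\P_m \D^j B$, so the constants $M_k, M_k',\dots$ are inherited. The domain geometry requires more care because $\kD_k$ need not be $\P_m$-invariant; however, by (ii), there exists $m_*$ such that for all $m\geq m_*$ the projection $\P_m$ maps $\kD_{K+1}^{-\delta}$ into $\kD_k^{-\delta/2}$ for every $k\leq K$, so the hypotheses of (B1) hold for $B_m$ on the $\delta_*/2$-nested family $\{\kD_k^{-\delta/2}\}$ with uniform bounds.

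Second, with $B_m$ in hand I would apply Theorem~\ref{c.local-wp-unif} to \eqref{e.slwe-m} (with $\delta/2$ in place of $\delta$) to obtain a $T_*>0$ and class \eqref{e.T-diff-gen-unif}-regularity for $\phi^t_m$, where the bounds depend only on those afforded by (B1), on \eqref{e.Rk}, on $\omega$, and on $\delta$, all of which are independent of $m$. Composing with $\P_m$, which preserves each $\kY_k$ with norm $\leq 1$, gives the stated regularity for $\Phi^t_m = \phi^t_m \circ \P_m$ and confirms that $\Phi^t_m$ maps $\kD_{K+1}^{-\delta}$ into $\kD_K$. For the Runge--Kutta data I would apply Theorem~\ref{t.h-diff-unif} to the same projected equation: the key point is that the stability estimates \eqref{e.RKShA} involve $A$ alone and are therefore unchanged by the projection, while the contraction-mapping step that defines $W$ via \eqref{eq:newRKStagesIter} uses only the uniform bounds on $B_m$ and on $(\id - h\a A)^{-1}$, so the existence interval $h_*$ and the bounds on $W^j_m$ and $\Psi^h_m$ are again $m$-independent.

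The main obstacle is the domain-geometry step: ensuring that, uniformly in $m\geq m_*$, both $\P_m U$ \emph{and} the trajectory emanating from it remain in a common smaller subdomain where the contraction-mapping and differentiability machinery of \cite{OliverWulffRK} produces $m$-independent constants. Once the threshold $m_*$ is chosen as above and the nested family is shrunk by $\delta/2$, the remainder is a direct translation of the proofs of Theorems~\ref{c.local-wp-unif} and \ref{t.h-diff-unif}, noting at each step that every $m$-dependent quantity is controlled by a corresponding $m$-independent one. The detailed execution is carried out in \cite{OliverWulffGalerkin}, and I would refer the reader there for the full argument.
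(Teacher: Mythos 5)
The paper gives no proof of this theorem; it is quoted directly from the companion paper \cite{OliverWulffGalerkin}, which is exactly where your sketch also ends up deferring. Your outline --- treating $B_m=\P_m B$ as a nonlinearity satisfying (B1) with $m$-independent constants on slightly shrunken domains, using $\norm{\P_m}{\kY_k\to\kY_k}\leq 1$ together with the tail bound $\norm{\Q_m U}{\kY_k}\leq m^{-1}\norm{U}{\kY_{k+1}}$ to fix the domain geometry for $m\geq m_*$, and then invoking Theorems~\ref{c.local-wp-unif} and~\ref{t.h-diff-unif} with their $m$-independent constants --- is a faithful reconstruction of the strategy carried out there, so the proposal is consistent with the paper's treatment.
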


Note that $\Phi^t_m$ and $\Psi^h_m$ are analytic in $t$ resp.\ in $h$
so long as $B$ is analytic on $\kY$.  However, the radius of
analyticity is generally not uniform in $m$.

Next, we present an exponential error bound for the projection error
of the numerical scheme; this is necessary for obtaining an
exponentially small embedding error in Section~\ref{ssec:embedFlow}
below.

\begin{lemma}[Exponential projection error estimate for the numerical scheme]
\label{l.projection-error} 
Assume that the semilinear evolution equation \eqref{e.pde} satisfies
conditions \textup{(A)} and \textup{(B0--2)}.  Let, as before,
$\Psi^h$ and $\Psi_m^h$ denote a single step of a Runge--Kutta method
subject to \textup{(RK1--3)} applied to the full and the projected
semilinear evolution equation, \eqref{e.pde} and \eqref{e.slwe-m},
respectively.  Then there are positive constants $h_*$, $m_*$, and
$c_\Psi$ such that for all $m \geq m_*$, $h \in [0,h_*]$ and $U \in
\kD_{\tau,L+1}$,
\begin{equation}
  \norm{\Psi^h(U) - \Psi_m^h (U)}{\kY_1}
  \leq c_\Psi \, m^{-L} \, \e^{-\tau m^{1/q}} \,.
  \label{e.projection-error.c} 
\end{equation}
\end{lemma}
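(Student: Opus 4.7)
The plan is to derive a linear fixed-point identity for the stage-vector difference $\Delta W \equiv W - W_m$ by subtracting the two versions of \eqref{eq:newRKStagesIter}, solve it via \eqref{e.RKShA.a}, and then propagate the resulting estimate to $\Psi^h(U)-\Psi_m^h(U)$ using \eqref{eq:NewRKUpdate}.

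First, using $B(W) - \P_m B(W_m) = \Q_m B(W) + \P_m(B(W)-B(W_m))$, I would write
\[
(\id - h\a A)\,\Delta W
= \1\,\Q_m U + h\a\,\Q_m B(W) + h\a\,\P_m\bigl(B(W)-B(W_m)\bigr).
\]
Applying $(\id-h\a A)^{-1}$ via \eqref{e.RKShA.a}, taking $\kY^s$-norms, and using that $B$ is Lipschitz from $\kY$ to $\kY$ on $\kD$ by (B1), the third term contributes a factor of $Ch\,\|\Delta W\|_{\kY^s}$ which can be absorbed on the left for $h$ sufficiently small. The two source terms are then controlled by the Gevrey estimate \eqref{e.qm-estimate}: the hypothesis $U \in \kD^{-\delta}_{\tau,L+1}$ gives $\|\Q_m U\|_\kY \lesssim m^{-(L+1)}\e^{-\tau m^{1/q}}$, and, provided that $W \in \kD_{\tau,L}^s$ with $m$-uniform bounds, assumption (B2) yields $B(W) \in \kY_{\tau,L}^s$ and thus $\|\Q_m B(W)\|_\kY \lesssim m^{-L}\e^{-\tau m^{1/q}}$. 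This produces $\|\Delta W\|_{\kY^s} \lesssim m^{-L}\e^{-\tau m^{1/q}}$.

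Next, from \eqref{eq:NewRKUpdate} the one-step error reads
\[
\Psi^h(U)-\Psi_m^h(U)
= \mS(hA)\,\Q_m U + h\,\b^T(\id - h\a A)^{-1}\bigl[\Q_m B(W) + \P_m(B(W)-B(W_m))\bigr],
\]
which I would estimate in the $\kY_1$ norm. The first term contributes $c_\mS\|\Q_m U\|_{\kY_1}$, and applying the weighted estimate analogous to \eqref{e.qm-estimate} at level one gives $\|\Q_m U\|_{\kY_1} \lesssim m^{-L}\e^{-\tau m^{1/q}}\|U\|_{\kY_{\tau,L+1}}$, since the dropped factor $|\Q A|$ exchanges one power of $m$. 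For the second term, the crucial observation is that by (RK3) and \eqref{e.RKShA.b}, the operator $hA\,(\id - h\a A)^{-1} = \a^{-1}\,[h\a A\,(\id - h\a A)^{-1}]$ is uniformly bounded on $\kY^s$ for $h\in[0,h_*]$; hence $h\,(\id - h\a A)^{-1}$ maps $\kY^s$ into $\kY_1^s$ with an $h$-uniform bound. Combined with the $\kY^s$-estimates from the previous step, namely $\|\Q_m B(W)\|_{\kY^s} \lesssim m^{-L}\e^{-\tau m^{1/q}}$ and $\|B(W)-B(W_m)\|_{\kY^s}\lesssim \|\Delta W\|_{\kY^s}\lesssim m^{-L}\e^{-\tau m^{1/q}}$, this yields the claimed $\kY_1$-bound.

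The main obstacle is to ensure uniform-in-$m$ Gevrey regularity of the full stage vector $W$ over a common stepsize interval $[0,h_*]$. This I would handle by invoking Theorem~\ref{t.h-diff-unif} with $\kY_{\tau,L}$ in place of $\kY$, which is legitimate by (B2), so that $W$ lies in $\kD_{\tau,L}^s$ with bounds depending only on (B2) data; together with the analogous uniform-in-$m$ regularity of $W_m$ furnished by Theorem~\ref{t.wm_psim-uniform}, this guarantees that $W$ and $W_m$ remain in the common sub-domain on which $B$ is Lipschitz both on $\kY$ and on $\kY_{\tau,L}$, closing the estimate after a suitable choice of $h_*$ small and $m_*$ large.
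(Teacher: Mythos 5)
Your proposal is correct and follows essentially the same route as the paper's proof: the same subtraction of the two stage fixed-point equations, the same splitting $B(W)-\P_m B(W_m)=\Q_m B(W)+\P_m(B(W)-B(W_m))$ with absorption of the Lipschitz term for small $h$, the same use of \eqref{e.qm-estimate} together with (B2) for the source terms, and the same combination of (RK3) with \eqref{e.RKShA.b} to get the uniform $\kY^s\to\kY_1^s$ bound for the update, with the Gevrey regularity of $W$ and $W_m$ supplied by Theorems~\ref{t.h-diff-unif} and~\ref{t.wm_psim-uniform} applied on the $\kY_{\tau,L}$ scale.
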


\begin{proof}
By Theorems~\ref{t.h-diff-unif} and~\ref{t.wm_psim-uniform} applied
with $\kY_{\tau,L}$ in place of $\kY$ and $K=0$, there exist $h_*>0$
and $m_* > 0$ such that
\[
  \Psi^h, \Psi^h_m, W^j, W_m^j  \in 
  \kCb (\kD_{\tau,L+1} \times [0,h_*];
    \kY_{\tau,L+1} \cap \kD_{\tau, L}^\delta) 
\]
for $j=1,\ldots, s$ with bounds which are uniform in $h \in (0,h_*]$
and $m \geq m_*$.

We first estimate the difference of the stages vectors $W(U) -
W_m(U)$, noting that
\begin{align*}
  W(U) 
  = & (\id - h {\a} A)^{-1} \, 
      \bigl( \1 U  + h \a B(W(U)) \bigr) \,, \\
  W_m(U) 
  = & (\id - h {\a} A)^{-1} \, 
      \bigl( 
        \P_m \1 U  + h \, \P_m \a B(W_m(U)) 
      \bigr) \,. 
\end{align*}
Taking the difference of both expressions, we obtain
\begin{equation}
  W(U) - W_m(U) 
    = (\id - h {\a} A)^{-1} \, 
      \bigl(
        \Q_m \1 U + h \, \a \,
        \bigl[
          B(W(U)) - \P_m B(W_m( U)) 
        \bigr]
       \bigr) \,.
  \label{e.w-wm}
\end{equation}
Setting 
\begin{equation*}
  \norm{\b}{} = \sum_{i=1}^s \lvert \b_i \rvert
  \quad \text{and} \quad
  \norm{\a}{} 
  = \max_{i=1,\ldots, s} \sum_{j=1}^s \lvert \a_{ij} \rvert \,,
\end{equation*}
and using estimate \eqref{e.RKShA.a}, we obtain
\begin{multline*}
  \norm{h \, \a(\id - h {\a} A)^{-1} (B(W(U)) - 
        \P_m B(W_m(U)))}{\kY^s} \\
  \leq h \, \Lambda \, \norm{\a}{} \,
        \norm{B(W(U)) - \P_m B(W_m(U))}{\kY^s} \,.
\end{multline*} 
Using the triangle inequality and the mean value theorem, we further
estimate
\begin{align}
  \norm{B(W(U)) & - \P_m B(W_m(U))}{\kY^s} 
    \notag \\
  & \leq \norm{\Q_m B(W(U))}{\kY^s} + 
         \norm{\P_m B(W(U)) - \P_m B(W_m(U))}{\kY^s} 
    \notag \\
  & \leq \norm{\Q_m B(W(U))}{\kY^s} 
         + M_0' \, \norm{W(U) - W_m(U)}{\kY^s} \,.
\label{e.bw-wm}
\end{align} 
Taking the $\kY^s$ norm of \eqref{e.w-wm}, using \eqref{e.RKShA.a},
and inserting \eqref{e.bw-wm}, we obtain
\begin{align*}
  \norm{W(U) - W_m(U)}{\kY^s}
  & \leq s \, \Lambda \, \norm{\Q_m U}{\kY} 
         +  h \,\norm{\a}{} \, \Lambda \, \norm{\Q_m B(W(U))}{\kY^s}
         \notag \\
  & \quad +
         h \, M_0' \, \norm{\a}{} \, \Lambda \, 
         \norm{W(U) - W_m(U)}{\kY^s} \,.
\end{align*}
This proves that, for $h_* < {1}/{(M_0' \norm{\a}{} \Lambda)}$, 
\[
  \norm{W(U) - W_m(U)}{\kY^s}
  \leq s \, \Lambda \, \frac{\norm{\Q_m U}{\kY}}%
                       {1 - h_*M_0'\Lambda \norm{\a}{}}  
     + h_* \, \norm{\a}{} \, \Lambda \, 
       \frac{\norm{\Q_m B(W(U))}{\kY^s}}%
            {1 - h_* M_0'\Lambda \norm{\a}{}} \,.
\]
We apply \eqref{e.qm-estimate} to the first term on the right and note
that, again by \eqref{e.qm-estimate},
\begin{equation}
  \norm{\Q_m B(W(U))}{\kY^s} 
  \leq s \, m^{-L} \, \e^{-\tau m^{1/q}} \, M_{\tau,L} \,,
  \label{e.qmbw}
\end{equation}
where $M_{\tau,L}$ is the bound for the norm of $B \colon
\kD_{\tau,L}^\delta \to \kY_{\tau,L}$ afforded by assumption (B2).
This establishes that there exists a constant $c_W$ such that
 \begin{equation}
  \norm{W(U) - W_m(U)}{\kY^s}
  \leq c_W \, m^{-L} \, \e^{-\tau m^{1/q}}
  \label{e.projection-error.a}  
\end{equation}
for all $m \geq m_*$ and $h \in [0,h_*]$.  (As in
\cite{OliverWulffGalerkin}, where we considered the case $\tau=0$, we
could obtain a stage vector error bound in the $\kY_1^s$-norm by
applying $A$ onto \eqref{e.w-wm} and using \eqref{e.RKShA.b}, but this
is not necessary for what follows.)

To estimate the difference between the Runge--Kutta updates, we write
them in the form \eqref{eq:NewRKUpdate} such that the respective right
hand sides are (uniformly) bounded operators,
\begin{align*}
  \Psi^h (U)
  & = \mS(hA) \, U + h \, \b^T \, (\id - h \a A)^{-1} \,
      B(W(U)) \,, \\
  \Psi_m^h (U)
  & = \mS(hA) \, \P_m U + h \, \b^T \, (\id - h \a A)^{-1} \,
      \P_m B(W_m(U)) \,.
\end{align*}
Then,
\begin{multline*}
  \Psi^h (U) - \Psi_m^h (U)
  = \mS(hA) \, \Q_m U 
    + h \, \b^T \, (\id - h \a A)^{-1} \,
      \bigl[
        B(W(U)) - \P_m B(W_m(U))
      \bigr] \,.
\end{multline*}
Inserting \eqref{e.qmbw} and \eqref{e.projection-error.a} back into
\eqref{e.bw-wm}, we also find that
\begin{equation}
  \norm{B(W(U)) - \P_m B(W_m(U))}{\kY^s}
  \leq c_B \, m^{-L} \, \e^{-\tau m^{1/q}} 
  \label{e.projection-error.b}   
\end{equation}   
for some constant $c_B>0$.  We note that \eqref{e.YtauellProduct} and
\eqref{e.RKShA.b} imply
\[
  \norm{h \a  (\id - h \a A)^{-1}}{\kY^s \to \kY_1^s} 
  \leq 2 + \Lambda \,.
\]
This and the invertibility of $\a$, assumption (RK3), then yield
\begin{align*}
  & \norm{\Psi^h (U) - \Psi_m^h (U)}{\kY_1} \\
  & \leq (1 + \sigma h) \, \norm{\Q_m U}{\kY_1} + 
       \norm{\b}{} \, \norm{\a^{-1}}{} \,
       \norm{h \a(\id - h {\a} A)^{-1}(B(W(U)) - 
             \P_m B(W_m(U)))}{\kY_1^s} \\
  & \leq (1 + \sigma h) \, \norm{\Q_m U}{\kY_1} + 
       (2+\Lambda) \, \norm{\b}{} \, \norm{\a^{-1}}{} \,
       \norm{ B(W(U)) - \P_m B(W_m(U))}{\kY^s} \,. 
\end{align*}
Inequality \eqref{e.projection-error.c} is now a consequence of
\eqref{e.qm-estimate} and \eqref{e.projection-error.b}.
\end{proof}


\subsection{Approximate embedding of semidiscretizations into a flow}
\label{ssec:embedFlow}

We first review an approximate embedding result for Runge--Kutta
discretizations of ODEs and then show how to extend it to PDEs.
Consider the autonomous ODE
\begin{equation}
  \label{eq:ode}
  \dot{y} = f(y)
\end{equation}
defined on the closed ball $\kB^{\C^m}_{r}(y^0)$.  We write $\phi^t$
to denote the flow of \eqref{eq:ode} and $\psi^{h}$ denote the
time-$h$ map of an $s$-stage Runge--Kutta method of the form
\eqref{eq:RK} applied to \eqref{eq:ode}.  When $f$ is analytic, it is
known that $\psi^{h}$ can be expanded in a converging power series in
$h$ on a smaller ball, so that we can write
\begin{equation}
  \psi^{h}(y) 
  = y + \sum_{j=1}^\infty h^j \, g^j(y) \,.
  \label{e.psi-expansion}
\end{equation}
Specifically, as shown in \cite[Theorem IX.7.2]{HaLu2002} (with $2R$
there replaced by $r$ here) via Cauchy estimates,
\eqref{e.psi-expansion} holds true on $\kB^{\C^m}_{r/2}(y^0)\times [0,
r/({4 \lVert \a \rVert M}))$.  Moreover, the numerical time-$h$ map
$\psi^h$ can be embedded into the flow of a modified vector field up
to an exponentially small error.  A general form of this result was
proved in \cite{BeGi1994} with specific proofs for the class of
Runge--Kutta schemes we consider in \cite{HaLu2002,Re1999}, see also
\cite{LeRe04}.  We state the result as follows.
 
\begin{theorem} \label{t.embedding}
In the setting introduced above, there are positive constants $\eta$,
$c_1$, and $c_2$ which depend only on the method such that for every
$r>0$ and $M>0$ such that
\[
  \norm{f(y)}{} \leq M 
  \quad \mbox{for} \quad y \in \kB^{\C^m}_{r}(y^0)
\]
and every $h \in [0,\eta r/M]$ there exists a modified differential
equation $\dot{y} = \tilde{f}(y)$, defined on $\kB^{\C^m}_{r/4}(y^0)$,
whose flow $\tilde\phi^t$ satisfies $\tilde\phi^t(y^0) \in
\kB^{\C^m}_{r/4}(y^0)$ for at least $0 \leq t \leq h$ and which
satisfies the estimate
\[
  \norm{\psi^h(y^0) - \tilde\phi^h (y^0)}{} 
  \leq h \, c_1 \, M \, \e^{-\tfrac{c_2r}{hM}}_{\vphantom 1} \,.
\]
\end{theorem}

The proof shall not be repeated in detail here.  But we note, for
later reference, that the modified vector field is constructed as a
power series in $h$,
\begin{equation}
  \label{eq:tildeFSeries}
  \tilde{f}^n (y;h) 
  = f(y) + \sum_{j=p}^{n-1} h^j \, f^{j+1}(y) \,,
\end{equation}
where $p$ is the order of the numerical method.  Its exponential map
is then expanded in powers of $h$ and matched term by term with the
expansion of the numerical time-$h$ map \eqref{e.psi-expansion}.  This
yields a recursive expression for the coefficient vector fields, 
\begin{equation}
  \label{eq:jModf}
  f^j(y) = g^j(y) - \sum_{i=2}^j \frac1{i!} 
          \sum_{k_1+\ldots + k_i = j}
          \left( \D_{k_1} \cdots \D_{k_{i-1}} f^{k_i} \right)(y) \,, 
\end{equation}
for $j \geq 2$ where $k_i \geq 1$ for all $i$, see
\cite[Section~3.1]{BeGi1994}.  We write $\D_i g(y) = \D g(y) f^i(y)$
as short hand notation for the Lie derivative with respect to the
$i$th coefficient vector field, cf.\ \cite[Lemma IX.7.3]{HaLu2002}.
The proof of Theorem~\ref{t.embedding} proceeds by carefully
estimating the growth of the $f^j$, noting that the optimal truncation
is achieved when $n=n(h) = \lfloor c_2 r/(hM) \rfloor$.  When
referring to the optimally truncated vector field, we write
$\tilde{f}_h$ or just $\tilde{f}$.

In addition, we need the following estimate which guarantees
consistency of the truncation.  It is a slight generalization of
results proved in \cite{HaLu2002, Re1999}.
 
\begin{lemma} \label{l.fh-f} 
In the notation of Theorem~\textup{\ref{t.embedding}}, for every  $a
\geq 1$ there exists a constant $c_3=c_3(a)$ such that for every $h
\in [0,\eta r/M]$, 
\[
  \norm{\tilde{f}^a(y) - \tilde f(y)}{} 
  \leq c_3 \, r^{-a} \, M^{a+1} \, h^a
  \quad \text{for} \quad 
  y \in \kB^{\C^m}_{r/4}(y^0) \,. 
\]
\end{lemma}

\begin{proof}
It is known, see \cite{BeGi1994} for general numerical one-step
methods and \cite{HaLu2002,Re1999} for the Runge--Kutta methods
considered here, that there exist positive constants $c_4$ and
$c_5\leq 1/(c_2\e)$ which depend only on the method such that
\begin{equation}
  \label{e.fj}
  \norm{f^j(y)}{} 
  \leq c_4 M \, \biggl(
                  \frac{c_5 Mj}r
                \biggr)^{j-1} 
  \quad\mbox{for}\quad
  y \in \kB^{\C^m}_{r/4}(y^0) \,.
\end{equation}
Applying this estimate to \eqref{eq:tildeFSeries} and using that $n
\leq c_2 r/(hM)$ and therefore $h \leq c_2r/(nM)$, we find that
\begin{align}
  \norm{\tilde f^a(y) -\tilde f(y)}{}
  & \leq h^a \sum_{j=a}^{n-1} h^{j-a} \, \norm{f^{j+1} (y)}{}
         \notag \\
  & \leq h^a \sum_{j=a}^{n-1} h^{j-a} \, c_4 \, M \,
         \biggl( \frac{c_5 M(j+1)}{r} \biggr)^j  
         \notag \\
  & \leq c_4 \, M \, h^a \sum_{j=a}^{n-1}
         \biggl( \frac{c_2 r}{nM} \biggr)^{j-a} \,
         \biggl( \frac{c_5 M(j+1)}{r} \biggr)^j  
         \notag \\
  & \leq c_4 \, \e^{a+1} \, M \,
         \biggl(\frac{c_2 h M}{r} \biggr)^a \, \sum_{j=a}^{n-1} 
            \biggl( \frac{j+1}{n} \biggr)^{j-a} \,
             \frac{(j+1)^{a}}{\e^{j+1}}  
         \notag\\
  & \leq c_4 \, \e^{a+1} \, M \,
         \biggl(\frac{c_2 h M}{r} \biggr)^a \, a! \,, 
  \label{e.fh-f}
\end{align}
where, in the last inequality, we have bounded the first factor inside
the sum by $1$ and noted that $j^a \, \e^{-j}$ is decreasing for
$j\geq a$, so that
\[
  \sum_{j=a}^{n-1} \frac{(j+1)^a}{\e^{j+1}} 
  \leq \int_a^n x^a \, \e^{-x} \, \d x
  \leq \int_0^\infty x^a \, \e^{-x} \, \d x
  = a! \,.
\]
This completes the proof.
\end{proof}

Since $\tilde f^{p} = f$, we note, setting $a=p$, that
Lemma~\ref{l.fh-f} provides a bound on $\norm{f(y) - \tilde f(y)}{}$.
Hence, by the triangle inequality, for every $h \leq \eta r/M$ there
is a method-dependent constant $c_{\tilde f}$ such that
\begin{equation}
  \label{e.normtildef}
  \norm{\tilde f (y)}{} \leq M \, c_{\tilde f}
  \quad \text{for} \quad 
  y \in \kB^{\C^m}_{r/4}(y^0) \,. 
\end{equation}

We now apply Theorem~\ref{t.embedding} to the sequence of truncated
problems \eqref{e.slwe-m} where, for each $m$, we work on the space
$\P_m \kY$ endowed with the $\kY_1$-norm.  (See
Remark~\ref{r.WhyWorkInY1} for an explanation of why we work with the
$\kY_1$ rather than the $\kY$-norm.)

Let $\kY_1^\C = \kY_1 + \i\kY_1$ be the complexification of $\kY_1$.
We now choose $R_1>0$ such that $\kD_{1}^\delta \subset
\kB^{\kY_1}_{R_1}(0)$.  Then, by construction, $f_m$ is analytic on
$\kD^\C \cap \P_m\kB_{R_1}^{\kY_1^\C}(0)$ and satisfies the estimate
\begin{align}
  \norm{f_m(u_m)}{\kY^\C_{1}} 
  & \leq \norm{A u_m}{\kY^\C_{1}} 
         + \norm{B_m (u_m)}{\kY^\C_{1}} 
    \notag \\
  & \leq m R_1
         + m \, \sup_{U \in \kD^\C} \norm{\P_m B(U)}{\kY^\C}
    \leq c_F \, m 
  \label{e.fm-estimate}
\end{align}
with $c_F = R_1 + M_0$.

Setting $M = c_F \, m$, Theorem~\ref{t.embedding} asserts that the
numerical time-$h$ map can be embedded into a modified flow up an
error which is exponentially small in the step size $h$, albeit not
uniformly in $m$.  If, however, we make the stronger assumption that
the initial data lies some Gevrey space $\kY_{\tau,L+1}$ with
$\tau>0$, Lemma~\ref{l.projection-error} asserts that the numerical
solution of the full semilinear evolution equation \eqref{e.pde}
remains exponentially close in the spectral cutoff $m$ to the
numerical solution of the projected system.  Thus, we can carefully
choose $m=m(h)$ to balance the projection error and the embedding
error to obtain an embedding result on the Gevrey space which is still
exponential in $h$, but at a lesser rate.  This is done in the next
lemma, where we also show that the result can be formulated not only
on balls as in Theorem~\ref{t.embedding}, but also on more general
$m$-independent subdomains of $\kY_1$ as needed in the proof of
Theorem~\ref{t.GenMain}.

We denote the coefficients of the power series expansion of $\psi_m^h$
by $g_m^j$, the expansion coefficients of the modified vector
field---defined via \eqref{eq:jModf}---by $f_m^j$, and seek an
optimally truncated modified vector field of the form
\begin{equation}
  \tilde f_m^n(u_m;h)
  = f_m(u_m) + \sum_{j=p}^{n-1} h^j \, f_m^{j+1}(u_m) \,.
  \label{e.tildefm}
\end{equation}
 
\begin{lemma}[Embedding lemma for Gevrey class data]
\label{l.gevrey-embedding}
Assume that the semilinear evolution equation \eqref{e.pde} satisfies
conditions \textup{(A)} and \textup{(B0--2)}.  Let, as before,
$\Psi^h$ denote a single step of a Runge--Kutta method subject to
\textup{(RK1--3)} applied to the semilinear evolution equation
\eqref{e.pde}.  Then there exists $h_* >0$ such that the choices
\begin{subequations}
\begin{equation}
  \label{e.m}
  m(h) = \Bigl( \frac{\chi}{\tau h} \Bigr)^{\tfrac{q}{1+q}}
  \qquad \text{and} \qquad
  n(h) = \bigg\lfloor
             \tau^{\tfrac{q}{1+q}} \,
             \Bigl( \frac{\chi}{h} \Bigr)^{\tfrac{1}{1+q}} 
             \bigg\rfloor 
\end{equation}
with $\chi = c_2 \delta/(4 c_F)$ ensure that the modified vector field
\begin{equation}
  \label{e.modVF}
  \tilde F(U;h) \equiv \tilde f_{m(h)}^{n(h)}(\P_m U; h) \,,
\end{equation}
where $\tilde f_m^n$ is given by \eqref{e.tildefm}, has the following
properties.
\begin{enumerate}
\item $\tilde F \colon \kD_1^{\delta/2}\to \kY_1$  is analytic
for every fixed $h \in [0,h_*]$ with bound
\begin{equation}
  \label{e.boundtildeF}
  \norm{\tilde F}{\kCb(\kD_1^{\delta/2}, \kY_1)} 
  \leq c_{\tilde F} \, m(h)
\end{equation}
for some $c_{\tilde F}>0$ independent of $h$.

\item $\tilde F$ generates a modified flow $\tilde \Phi \colon \kD_1
\times [0,h] \to \kD_1^{\delta/4}$.

\item There exists a constant $c_{\tilde\Phi}$ such that for every $U
\in \kD_{\tau, L+1}$ and $h \in [0,h_*]$,
\begin{equation}
  \norm{\Psi^h(U) - \tilde \Phi^h (U)}{\kY_1}
  \leq c_{\tilde\Phi} \, \e^{-c_* h^{-\frac{1}{1+q}}} 
  \label{e.hilbert-embedding}
\end{equation}
with $c_* =\tau^{\frac{q}{1+q}} \, \chi^{\frac{1}{q+1}}$.

\item For each $a \in \N$ there is a constant $c_a\geq 0$ such that,
with $\tilde F^a_m \equiv \tilde f_{m }^{a} \circ \P_{m}$ and
$m=m(h)$, we have
\begin{equation}
  \label{e.tildefa-tildeF}
  \norm{\tilde F  - \tilde F_{m}^{a}}{\kCb(\kD_1^{\delta/2}, \kY_1)} 
  \leq c_{a} \, h^a \, m^{a+1} \,.
\end{equation} 
\end{enumerate}
\end{subequations}
\end{lemma} 

\begin{proof} 
Set $r = \delta/4$.  As $\kD_1^\delta$ is a bounded subset of $\kY_1$,
there exists $m_*$ such that, due to \eqref{e.qm-estimate},
$\norm{\Q_m U}{\kY} \leq m^{-1} \, \norm{U}{\kY_1} \leq \delta/2 - r$
and therefore $\P_m U \in \kD^{\delta-r}$ for every $U \in
\kD_1^{\delta/2}$ and $m \geq m_*$.  In particular, for any such $U$
and $m$, the ball
\[
  \kB_r^{\P_m \kY_1^\C} (\P_m U) 
  = \{ u \in \P_m \kY^\C \colon \norm{u - \P_m U}{\kY_1^\C} \leq r \}
\]
is contained in $\kD^\C \cap \P_m \kB_{R_1}^{\kY_1^\C} (0)$.  Then
estimate \eqref{e.fm-estimate} holds true and we can apply
Theorem~\ref{t.embedding} with $M = c_Fm$ and $y^0 = \P_m U$ on this
ball.  This theorem asserts that for every $h \in [0, \eta r/(c_Fm)]$,
the modified vector field $\tilde f_m = \tilde f_m^{n(h)}$ with $n(h)
= \lfloor c_2r/(hc_Fm) \rfloor$ is defined on $\kB_{r/4}^{\P_m
\kY_1^\C}(\P_m U)$ and analytic as map from $\kB_{r/4}^{\P_m
\kY_1^\C}(\P_m U)$ to $\kY_1^\C$.  Its flow $\tilde \phi_m^t$
satisfies $\tilde \phi^t_m(\P_m U) \in \kB_{r/4}^{\P_m\kY_1^\C}(\P_m
U)$ for at least $0 \leq t \leq h$, and
\begin{equation}
  \norm{\psi_m^h(\P_m U) - \tilde \phi^h_m(\P_m U)}{ \kY_1^\C }
  \leq h \, c_1 \, c_F \, m \, 
       \e^{-\tfrac{c_2r}{hc_Fm}}_{\vphantom 1} \,.
  \label{e.exp-proj-estimate}
\end{equation}
By construction, $\tilde{F}_m = \tilde{f}_m \circ \P_m$ is analytic as
map from $\kD_1^{\delta/2}$ to $\kY_1$ with flow map $\tilde\Phi_m =
\tilde\phi_m\circ \P_m + \Q_m$ which is analytic as map from $\kD_1$
to $\kD_1^{\delta/4}$ for any choice of $m\geq m_*$ and $t \in [0,h]$.
Estimates \eqref{e.boundtildeF} and \eqref{e.tildefa-tildeF} then
follow directly from \eqref{e.normtildef} and Lemma~\ref{l.fh-f},
respectively.
  
Our next step is to estimate the difference between the solution to
the modified projected equation and the numerical solution of the full
semilinear evolution equation \eqref{e.pde}.  We split the error into
the projection and the embedding error, the first of which is
controlled by Lemma~\ref{l.projection-error} and the second is
controlled by \eqref{e.exp-proj-estimate}.  By
Theorems~\ref{t.h-diff-unif} and~\ref{t.wm_psim-uniform},
respectively, with $K=0$ and $\kY$ replaced by $\kY_{\tau,L}$, there
exist $h_*>0$ and (a possibly increased choice of) $m_*$ such that,
for $h \in [0,h_*]$ and $m \geq m_*$, $\Psi^h$ and $\Psi^h_m$ are
continuous maps from $\kD_{\tau,L+1}$ to $\kY_{\tau,L+1}$ and such
that the exponential projection error estimate asserted by
Lemma~\ref{l.projection-error} holds.  Possibly increasing $m_* \geq
\eta r/(c_F h_*)$, we ensure that both the embedding error estimate
\eqref{e.exp-proj-estimate} and the truncation error estimate
\eqref{e.projection-error.c} hold true for every $m \geq m_*$ and $h
\in [0,\eta r/(c_Fm)]$.

Since $\kD_{\tau,L+1} \subset \kD_{1}$, splitting the total error into
a projection error component and the embedding error on the subspace
$\P_m \kY$, we obtain that
\begin{align*}
  \norm{\Psi^h (U) - \tilde \Phi_m^h (U)}{\kY_1}
  & \leq \norm{\Psi^h(U) - \psi_m^h (\P_m U)}{\kY_1} + 
         \norm{\Q_mU}{\kY_1} \\
  & 
         + \norm{\psi_m^h (\P_m U) - \tilde \phi_m^h (\P_m U)}{\kY_1}
         \notag \\
  & \leq (1+c_\Psi) \, m^{-\ell} \, \e^{-\tau m^{1/q}} \,
         + h \, c_1 \, c_F \, m \, 
           \e^{-\tfrac{c_2r}{hc_Fm}}_{\vphantom 1} 
\end{align*}
for all $U \in \kD_{\tau,L+1}$, $h \in [0,\eta r/(c_Fm)]$, and $m \geq
m_*$.  The first and second error decrease with $m$ whereas the third
error increases with $m$.  We now demand that the two exponents on the
right coincide.  Under the ansatz $m = \zeta \, h^{-\alpha}$ for some
$\zeta$ and $\alpha \in (0,1)$, we obtain
\begin{equation*}
  \norm{\Psi^h (U) - \tilde \Phi_m^h ( U)}{\kY_1 }
  \leq (1+ c_\Psi) \, h^{\alpha \ell} \, \zeta^{-\ell} \, 
         \e^{-\tau\zeta^{1/q} h^{-\alpha/q}}
       + c_1  \, c_F \, \zeta \, h^{1-\alpha} \, 
         \e^{-\chi \zeta^{-1} h^{\alpha-1}}
\end{equation*}
with $\chi = c_2 r/c_F = c_2 \delta/(4 c_F)$.  Then the exponents
coincide provided $\tau \zeta^{1/q} h^{-\alpha/q} = \chi
\zeta^{-1}h^{\alpha-1}$, i.e., when
\begin{equation}
  \label{e.chooseAlpha}
  \alpha = \frac{q}{1+q} 
  \qquad \mbox{and} \qquad 
  \zeta = \Bigl( \frac\chi\tau \Bigr)^\alpha \,.
\end{equation}
This implies that $m(h)$ is given by \eqref{e.m}, and
\begin{equation}
  \norm{\Psi^h(U) - \tilde \Phi^h_{m(h)} (U)}{\kY_1}
  \leq \tilde c\, h^\nu \, \e^{-c_*  h^{-\frac{1}{1+q}}} 
  \label{e.hilbert-embedding-1}
\end{equation}
with $c_* = \tau \zeta^{1/q} = \tau^{\frac{q}{1+q}} \,
\chi^{\frac{1}{q+1}}$ and $\nu = {\min \{1, q\ell \}}/{(1+q)}$, and
where we possibly need to shrink $h_*>0$ to satisfy $m(h_*) \geq m_*$
and $h_* \leq \eta r/(c_F m(h_*)) = \eta/c_2 \, \chi^{1-\alpha} \,
(\tau h_*)^\alpha$.  Solving the latter inequality for $h_*$ leads to
the restriction that
\[
   h_* \leq \chi \, \tau^q \left( \frac{\eta}{c_2} \right)^{q+1} \,.
\]
A similar computation yields the form of $n(h)$ stated in \eqref{e.m}.
The exponential estimate \eqref{e.hilbert-embedding} is then obtained
by defining the modified vector field by \eqref{e.modVF} with
corresponding modified flow $\tilde \Phi^t(U)\equiv
\tilde\Phi^t_{m(h)} (U)$ and setting $c_\Phi = \tilde c \, h_*^\nu$.
\end{proof}

\begin{remark}
The formal expansions of both, the numerical method and the modified
vector field, contain powers of the unbounded operator $A$.
Therefore, the modified vector field cannot be written as a semilinear
Hamiltonian evolution equation of the form \eqref{e.HamPDE}.  If we
were simply interested in constructing the modified vector field, we
could avoid using spatial Galerkin truncation by setting up different
spaces for domain and range such that the modified vector field,
computed up to a given order, is continuous.  In fact, we need such
techniques in Section~\ref{ssec:formalBEA} to obtain the order
estimate \eqref{e.HtildeHClose} for the modified Hamiltonian which is
yet to be constructed.  In such a setting, however, we do not have a
theory of local existence of solutions for the modified differential
equation, so that we cannot obtain an approximate embedding of the
numerical method into a flow.
\end{remark}

\begin{remark} \label{r.WhyWorkInY1} 
The reason for constructing the modified vector field on a subspace of
$\kY_1$ rather than $\kY$ is that on general domains we can only
maintain a valid domain of definition of the nonlinearity $B(U)$ under
Galerkin truncation uniformly in $m\geq m_*$, and, in particular,
assert estimate \eqref{e.fm-estimate} by dropping down at least one
rung on the scale of spaces.  Similarly, we require data in
$\kD_{\tau,L+1}$ rather than $\kD_{\tau,L}$ for the exponential
estimates \eqref{e.hilbert-embedding} and \eqref{e.onestepHamExpEst}
because we want to define $\Psi^h$ and $\Psi^h_m$ uniformly in $h$ and
$m$ on general open sets of Gevrey spaces, not just on open balls.
This can only be done when constructing them as maps from
$\kD_{\tau,L+1}$ to $\kY_{\tau,L+1} \cap \kD_{\tau,L}^\delta$, see
\cite{OliverWulffRK,OliverWulffGalerkin}.
\end{remark}

Next, we show that in the Hamiltonian case the above construction also
yields a modified Hamiltonian which is approximately conserved under
the numerical time-$h$ map of the full semilinear evolution equation
\eqref{e.HamPDE}.

\begin{lemma}[Modified Hamiltonian for Gevrey class data]  
\label{l.gevrey-hamiltonian}
Under the conditions and in the notation of
Lemma~\ref{l.gevrey-embedding}, suppose further that \textup{(H0--4)}
and \textup{(RK4)} hold true.  Then, for sufficiently small $h_*>0$,
there exists a modified Hamiltonian $\tilde{H} \colon \kD_1^{\delta/2}
\times [0, h_*] \to\R$, defined up to a constant of integration, which
is analytic in $U\in \kD_1^{\delta/2}$ for every $h \in [0,h_*]$ and
such that the modified vector field from
Lemma~\ref{l.gevrey-embedding} satisfies $\tilde F = \JJ \nabla \tilde
H$.  Moreover, there exist constants $c_* \in
(0,\tau^{\frac{q}{q+1}}\chi^{\frac{1}{q+1}})$, with $\chi$ as in
Lemma~\ref{l.gevrey-embedding} and $c_{\tilde{H}}>0$ such that for
every $U \in \kD_{\tau,L+1}$ and $h \in [0,h_*]$,
\begin{gather}
  \lvert \tilde{H}(\Psi^h (U),h) - \tilde{H}(U,h) \rvert
  \leq c_{\tilde{H}} \, \e^{-c_* h^{-\frac{1}{1+q}}} \,.
  \label{e.approx-conservation}
\end{gather}
\end{lemma}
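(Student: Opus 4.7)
The plan is to lift the finite-dimensional symplectic backward error analysis of Section~\ref{s.mod-ham} to the Galerkin truncation on $\P_m\kY$, then pull back to $\kD_1^{-\delta/2}$ via precomposition with $\P_m$, taking $m=m(h)$ and $n=n(h)$ as in Lemma~\ref{l.gevrey-embedding}. By Lemma~\ref{Le:JPm} the restriction of $\JJ$ to $\P_m\kY$ is a symplectic structure on that subspace, so the projected system $\dot u_m = f_m(u_m) = \JJ\P_m\nabla H(u_m)$ is a finite-dimensional Hamiltonian ODE. Since the Runge--Kutta method is symplectic (RK4), the derivation leading to \eqref{e.HjIntFormula} and \eqref{e.HSeries} applies: each $\JJ^{-1}\D f_m^j$ is self-adjoint on $\P_m\kY$ for $j \geq p$, and fixing a basepoint $u^0$ in the convex kernel of $\P_m\kD_1^{-\delta/2}$ (nonempty for sufficiently small $\delta$ by (H4), linearity of $\P_m$, and Theorem~\ref{t.starshaped}), the integrability formula
\[
H_m^j(u_m) = \int_0^1 \langle u_m - u^0,\, \JJ^{-1} f_m^j(u^0 + t(u_m-u^0)) \rangle_\kY \, \d t
\]
produces Hamiltonians for the $f_m^j$. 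I then set
\[
\tilde H_m(u_m;h) = H(u_m) + \sum_{j=p}^{n-1} h^j\, H_m^{j+1}(u_m),
\qquad
\tilde H(U;h) = \tilde H_m(\P_m U; h),
\]
both analytic in their spatial argument by analyticity of the $f_m^j$ and of $H$ (Lemma~\ref{l.H2}).

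To verify $\tilde F = \JJ\nabla\tilde H$, self-adjointness of $\P_m$ on $\kY$ combined with the chain rule yields $\nabla\tilde H(U) = \P_m\nabla\tilde H_m(\P_m U)$; commuting $\JJ$ past $\P_m$ through Lemma~\ref{Le:JPm} and using that $\tilde f_m$ takes values in $\P_m\kY$,
\[
\JJ\nabla\tilde H(U) = \P_m\JJ\nabla\tilde H_m(\P_m U) = \P_m\tilde f_m(\P_m U) = \tilde f_m(\P_m U) = \tilde F(U;h).
\]
Exact conservation of $\tilde H_m$ along the finite-dimensional modified flow $\tilde\phi_m^t$ (a direct consequence of $\tilde f_m = \JJ\nabla\tilde H_m$ and skew-symmetry of $\JJ$) then lifts to exact conservation of $\tilde H$ along $\tilde\Phi^t = \tilde\phi_m^t\circ\P_m$.

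For the exponential estimate \eqref{e.approx-conservation}, I combine exact conservation along $\tilde\Phi^h$, the embedding bound \eqref{e.hilbert-embedding}, and a Lipschitz-type bound on $\tilde H$. For $U\in\kD_{\tau,L+1}^{-\delta}$,
\[
\tilde H(\Psi^h U;h) - \tilde H(U;h)
= \tilde H(\Psi^h U;h) - \tilde H(\tilde\Phi^h U;h)
= \int_0^1 \langle \nabla\tilde H(U_s;h),\, \Psi^h U - \tilde\Phi^h U \rangle_\kY \, \d s,
\]
where $U_s = (1-s)\tilde\Phi^h U + s\Psi^h U$. Applying \eqref{e.normtildef} to $\tilde f_m$ with $M=c_F m(h)$ and using that $\JJ^{-1}$ is bounded on $\kY$ by (H0),
\[
\|\nabla\tilde H(V;h)\|_\kY = \|\JJ^{-1}\tilde F(V;h)\|_\kY \leq c\,m(h),
\]
so that together with $\|\Psi^h U - \tilde\Phi^h U\|_\kY \leq \|\Psi^h U - \tilde\Phi^h U\|_{\kY_1} \leq c_{\tilde\Phi}\,\e^{-c_* h^{-1/(1+q)}}$ and $m(h) = O(h^{-q/(1+q)})$, the resulting algebraic prefactor is absorbed into the exponential by slightly decreasing $c_*$, producing \eqref{e.approx-conservation}.

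The principal technical obstacles are domain bookkeeping: first, verifying that $\P_m\kD_1^{-\delta/2}$ has nonempty convex kernel uniformly in $m\geq m_*$ so that the integrability formula applies globally; second, guaranteeing that $\Psi^h(U)$ and the segment $U_s$ remain in the domain of $\tilde H$. The latter is handled by defining $\tilde H$ on a slightly enlarged set such as $\kD_1^{-\delta/2-\varepsilon}$, invoking Lemma~\ref{l.gevrey-embedding} with the correspondingly shrunken parameter, and shrinking $h_*$ so that the exponentially small $\kY_1$-discrepancy between $\Psi^h(U)$ and $\tilde\Phi^h(U)$ keeps $\Psi^h(U)$ well within this enlarged domain.
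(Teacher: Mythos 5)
Your proposal is correct and follows essentially the same route as the paper: construct the modified Hamiltonian for the Galerkin-truncated system using the symplecticity of the method and star-shapedness of the domain (the paper does this by applying the integrability Lemma~\ref{l.H2} directly to $\tilde F$ on $\kD_1^{-\delta/2}$ with the $\kY_1$-topology, which amounts to the same integral formula you write term by term on $\P_m\kY$ before pulling back by $\P_m$), and then derive \eqref{e.approx-conservation} from exact conservation along $\tilde\Phi^h$, the mean value theorem with $\norm{\nabla\tilde H}{}=\norm{\JJ^{-1}\tilde F}{}\leq c\,m(h)$ via \eqref{e.normtildef}, the embedding estimate, and absorption of the algebraic prefactor into the exponential. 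The domain bookkeeping you flag is handled in the paper exactly as you suggest, by shrinking $h_*$ so the embedding error stays below $r/4$.
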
 
 
\begin{proof}
By assumption (H1), the operator $\JJ^{-1}A$ is self-adjoint on $\kY$.
Since, by Lemma \ref{Le:JPm}, $\JJ^{-1}$ and $\P_m$ commute ,
$\JJ^{-1}A$ is also self-adjoint on $\P_m \kY$ with respect to the
restriction of the $\kY$-inner product to $\P_m \kY$.  Hence, the
linear part $\dot u_m = A_m u_m$ of \eqref{e.slwe-m} is Hamiltonian on
$\P_m \kY$.  Moreover, by (H2), the operator $\JJ^{-1}\D B(U)$ is
self-adjoint for each $U \in \kD^\delta$.  Hence, $\JJ^{-1}\D
B_m(u_m)$ is self-adjoint for each $u_m \in \kD^\delta \cap \P_m \kY$
so that, altogether, the vector field $f_m$ from \eqref{e.slwe-m} is
Hamiltonian as map from $ \kD^\delta \cap \P_m \kY$ to $\P_m \kY$ with
respect to the restriction of the $\kY$-inner product to $\P_m \kY$.
Since on each Galerkin subspace $\P_m \kY$ the numerical method
$\psi^h_m$ is symplectic, the Taylor coefficients $f_m^j$ of the
modified vector field $f_m^j$ are also Hamiltonian; see, e.g.,
\cite{BeGi1994, HaLu2002,LeRe04}.  Moreover, the operator $\JJ^{-1} \D
f_m^j(u_m)$ is self-adjoint with respect to the restriction of the
$\kY$-inner product to $\P_m \kY$ for each $u_m \in \kD^\delta \cap
\P_m \kY$ and the same holds true for $\JJ^{-1} \D \tilde
f^{n(h)}_{m(h)}(u_m)$.

As we argued in the proof of Lemma~\ref{l.gevrey-embedding}, $\P_mU
\in \kD^\delta$ for $U \in \kD_1^{\delta/2}$, so that $\JJ^{-1} \D
\tilde F(U)$ is self-adjoint with respect to the $\kY$-inner product
for each $U \in \kD_1^{\delta/2}$.  By assumption (H4), the set
$\kD^{\delta/2}_1$ is simply connected and star-shaped.

Therefore, we can proceed as in the proof of Lemma~\ref{l.H2}.  We fix
$U^0 \in \kD_1$ such that $\kD_1^{\delta/2}$ is star shaped with
respect to $U^0$ and define
\[
  \tilde H(U) 
  = \int_0^1 \langle \JJ^{-1} \tilde F (t\, U + (1-t) \, U^0), 
    U-U^0 \rangle \, \d t \,.
\]
This modified Hamiltonian $\tilde H$ is well-defined and analytic on
$\kD^{\delta/2}_1$.  Moreover, the steps taken in \eqref{e.HamComput}
still apply so that $\tilde H$ is invariant under the modified flow
with
\begin{equation*}
  \tilde F = \JJ \nabla \tilde H \,.
\end{equation*}

To prove \eqref{e.approx-conservation}, we decrease $h_*$ such that
the right hand side of \eqref{e.hilbert-embedding} is smaller than
$\delta/4$ for every $U \in \kD_{\tau,L+1}$.  This is to ensure that
$\Psi^h(U) \in \kD_1^{\delta/2}$, so that $\tilde H$ is defined at
$\Psi^h(U)$.  Then, using the mean value theorem, the bound on the
modified vector field given by \eqref{e.modVF},
\eqref{e.hilbert-embedding}, and the invertibility of $\JJ^{-1}$, we
estimate, for $h \in [0,h_*]$ and $U \in \kD_{\tau,L+1}$, that
\begin{align*}
  \bigl\lvert 
    \tilde H (\Psi^h(U)) - \tilde H (\tilde \Phi^h (U))
  \bigr\rvert
  & \leq \norm{\nabla \tilde{H}}{\kCb(\kD_1^{\delta/2}; \kY_1)} \,
	 \norm{\Psi^h(U) - \tilde \Phi^h(U)}{\kY_1} 
         \notag \\
  & \leq \norm{\JJ^{-1} \tilde F }{\kCb(\kD_1^{\delta/2}; \kY_1)} \, 
         \norm{\Psi^h(U) - \tilde \Phi^h(U)}{\kY_1 }
         \notag \\
  & \leq \norm{\JJ^{-1}}{\kE(\kY_1)} \,
         \norm{\tilde F}{\kCb(\kD_1^{\delta/2}; \kY_1)} \, 
         c_{\tilde\Phi} \, \e^{-c_* h^{-1/(1+q)}} 
         \notag \\
  & \leq \norm{\JJ^{-1}}{\kE(\kY_1)} \,
         c_{\tilde F} \, m(h) \, c_{\tilde\Phi} \, 
         \e^{-c_* h^{-1/(1+q)}} \,.
\end{align*}
Since $\tilde H$ is conserved under the modified flow, choosing $m$ as
in \eqref{e.m}, we obtain
\[
  \lvert \tilde{H}(\Psi^h (U)) - \tilde{H}(U) \rvert
  \leq \tilde{c} \, h^{- \tfrac{ q}{1+q}} \, 
       \e^{-c h^{-\frac{1}{1+q}}} \,.
\]
Dominating the algebraic prefactor by fractional exponential decay,
this inequality implies \eqref{e.approx-conservation} with a possibly
decreased value for $c_*>0$ as compared to its value in
Lemma~\ref{l.gevrey-embedding}.
\end{proof}

What is still missing is the proof of the $O(h^p)$-closeness of the
modified to the original Hamiltonian.  In a first attempt to prove
such a result, we write
\begin{equation}\label{e.h-closeness}
  \lvert H (U) - \tilde H  (U) \rvert
  \leq \lvert H (U) - H (\P_m U) \rvert +
         \lvert H (\P_m U) - \tilde H (U) \rvert \,,
\end{equation}
where $m=m(h)$ is as in \eqref{e.m}.  Under the assumptions of
Lemma~\ref{l.gevrey-hamiltonian}, the first term on the right is
exponentially small for $U \in \kD_{\tau,L+1}$.
 
To estimate the second term of \eqref{e.h-closeness}, choose some
fixed $U^0 \in \kD_1$ such that $\kD_1^{\delta/2}$ is star-shaped with
respect to $U^0$, and set $H(U^0) = \tilde H(U^0)$.  The naive choice
is then to employ \eqref{e.tildefa-tildeF} with $a=p$ so that $\tilde
F^a_m = \tilde f^a_{m} \circ \P_{m} = F \circ \P_{m}$.  Integrating
$\tilde F-F\circ \P_m$, we obtain the estimate
\begin{equation}
  \lvert H(u_m) - \tilde H_m(u_m) \rvert
  \leq O(m^{p+1}h^p) = O(h^{\frac{p-q}{q+1}}) 
  \label{e.suboptimal}
\end{equation}
for every $u_m = \P_m U$ and $U \in \kD_{1}^{\delta/2}$.  This
estimate is weaker than the expected $O(h^p)$.
 
A closer inspection reveals that, in the context of the semilinear
evolution equation \eqref{e.HamPDE}, the second inequality of
\eqref{e.fh-f} in the proof of Lemma~\ref{l.fh-f} is too weak: when
estimating the Taylor coefficients $f^j_m$ of the modified vector
field in some fixed Hilbert space norm, the unboundedness of the
operators $A$ contained therein will introduce a factor $m^j$, see
\eqref{e.fj}.  This propagates into the proof of \eqref{e.suboptimal}.
Note, however, that these estimates are simply about consistency, not
about constructing a flow.  Thus, we can afford to lose smoothness
rather than order.  In other words, we can estimate the Taylor
coefficients of the modified vector field as maps from one Hilbert
space into another with a weaker norm.  This will be detailed in the
next section.


\subsection{Modified vector fields on Hilbert spaces}
\label{ssec:formalBEA}

In this section, we present a more subtle estimate on the difference
between the original Hamiltonian and the modified Hamiltonian.  The
main difference to the derivation of \eqref{e.suboptimal} in the
previous section is that we consider the expansion coefficients of the
numerical method and of the modified vector field as maps between
different rungs on our scale of Hilbert spaces such that the loss of
smoothness is carefully accounted for.

We begin by establishing the necessary functional setting for the
analysis of modified vector fields on Hilbert spaces.  We then review
a result from \cite{OliverWulffGalerkin} on the Galerkin projection
error for the numerical time-$h$ maps.  This estimate is then
propagated into an estimate on the difference between the full and the
modified vector field, which finally implies a corresponding estimate
on the difference between the exact Hamiltonian and the modified
Hamiltonian.

In this section, we work directly with the standard construction of
the modified vector field.  Namely, for $\ell = 1, \dots, K+1$, we
write
\begin{equation}
  \label{e.Gl}
  G^\ell = \frac{\partial_h^\ell \Psi^h}{\ell!} \bigg|_{h=0}
\end{equation}
to denote the $\ell$th coefficient of the expansion of $\Psi^h$ in
powers of $h$, and define the analog of the expansion coefficients
\eqref{eq:jModf} for the modified vector field on Hilbert spaces as
follows: we set $F^1 \equiv G^1$ and define
\begin{equation}
  F^\ell 
  = G^\ell - \sum_{i=2}^\ell \frac1{i!} 
    \sum_{\ell_1+\ldots + \ell_i = \ell}
    \D_{\ell_1} \cdots \D_{\ell_{i-1}} F^{\ell_i} \,,
  \label{e.mod-vect}
\end{equation}
for $\ell = 2, \dots, K+1$, where the sum ranges over indices $\ell_i
\geq 1$ for all $i$, where $\D_j G = \D G \, F^j$.  We also recall
from Section~\ref{ssec:embedFlow} that $g_m^\ell$ and $f_m^\ell$
denote the $\ell$th coefficients of the expansions of the projected
numerical method and the corresponding modified vector field,
respectively, and set $G_m^\ell \equiv g_m^\ell \circ \P_m$ and
$F_m^\ell \equiv f_m^\ell \circ \P_m$.

In the notation of condition (B1), we abbreviate $\kU_\kappa =
\kD_{\kappa}$ for $\kappa = 1, \dots, K+1$.  Then the regularity
results Theorem~\ref{t.h-diff-unif} on $\Psi^h$ and
Theorem~\ref{t.wm_psim-uniform} on $\Psi_m^h$ imply that, in
particular, there exists $m_*$ such that for all $m \geq m_*$ and
$\ell = 1, \dots, K+1$,
\begin{equation}
  \label{e.class-Gl}
  G^\ell, G^\ell_m\in
  \bigcap_{\substack{j+k \leq N \\ \ell \leq k \leq K+1}}
  \kCb^j (\kU_k; \kY_{k-\ell}) \,.
\end{equation}
Moreover, bounds in the norms associated with \eqref{e.class-Gl} are
uniform in $m\geq m_*$.

In the following, we will state such bounds on vector fields in terms
of the three-parameter family of norms
\begin{equation}
  \label{e.snormg}
  \snorm{g}{N,K,S} 
  = \max_{\substack{j+k \leq N \\ S \leq k \leq K}}
    \norm{\D^j g}{\kC(\kU_k; \kE^j(\kY_k,\kY_{k-S}))} 
\end{equation}
for $1 \leq S \leq K \leq N$.  The parameter $S$ plays the role of a
loss of smoothness index as it forces the image of the map be
estimated at least $S$ rungs down the scale.  We can then prove a
simple result on the regularity of the expansion coefficients of the
modified vector field.

\begin{lemma}[Modified vector field on a scale of Hilbert spaces]
\label{l.mod-vect}
Assume that $G^1, \dots, G^{K+1}$ are of class \eqref{e.class-Gl}.
Then the vector fields $F^1, \dots, F^{K+1}$ defined by
\eqref{e.mod-vect} are also of class \eqref{e.class-Gl}.
\end{lemma}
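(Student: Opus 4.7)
My plan is to argue by strong induction on $\ell$, using the recursion \eqref{e.mod-vect}. The base case $F^1 = G^1$ is immediate from the hypothesis. For the inductive step I assume that $F^1, \dots, F^{\ell-1}$ are all of class \eqref{e.class-Gl}; since $G^\ell$ also lies in this class, it suffices to show that, for each tuple $(\ell_1,\dots,\ell_i)$ with $i \geq 2$, $\ell_s \geq 1$, and $\ell_1 + \cdots + \ell_i = \ell$, the iterated Lie derivative
\[
  T = \D_{\ell_1}\D_{\ell_2}\cdots \D_{\ell_{i-1}} F^{\ell_i}
\]
is in $\kCb^j(\kU_k;\kY_{k-\ell})$ for every $(j,k)$ with $j+k \leq N$ and $\ell \leq k \leq K+1$.

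The engine of the argument is the following one-step closure property, which is a straightforward product- and chain-rule estimate: if $H \in \kCb^{j_*+1}(\kU_{k_* - \mu};\kY_{k_* - \mu - S})$ and $F^\mu \in \kCb^{j_*}(\kU_{k_*};\kY_{k_* - \mu})$, then $(\D_\mu H)(U) = \D H(U)\,F^\mu(U)$ defines a map of class $\kCb^{j_*}(\kU_{k_*};\kY_{k_* - \mu - S})$. The key point is that for $U \in \kU_{k_*} \subset \kU_{k_* - \mu}$ the vector $F^\mu(U)$ lies in $\kY_{k_* - \mu}$, precisely the source space of $\D H(U) \in \kE(\kY_{k_* - \mu},\kY_{k_* - \mu - S})$, so the composition is well defined with the stated regularity. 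Unfolding $T$ from the outside in and applying this fact $(i-1)$ times, I find that $F^{\ell_i}$ is invoked at rung $k - (\ell - \ell_i)$ with differentiability order $j + i - 1$, and the outer factors $F^{\ell_r}$ for $r = 1,\dots,i-1$ at rung $k - \ell_1 - \cdots - \ell_{r-1}$ with differentiability $j + r - 1$. Because each $\ell_s \geq 1$, all the constraints
\[
  (j + r - 1) + (k - \ell_1 - \cdots - \ell_{r-1}) \leq j + k \leq N,
  \qquad \ell_r \leq k - \ell_1 - \cdots - \ell_{r-1} \leq K+1,
\]
follow from $\ell \leq k \leq K+1$, so every invocation is covered by the inductive hypothesis, and the composition has exactly the claimed class.

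The main obstacle will be precisely this bookkeeping of the loss of derivatives against the differentiability budget: a priori, since $F^\ell$ loses $\ell$ rungs of regularity, the recursion \eqref{e.mod-vect} threatens to require more smoothness than the hypotheses on the $G^\ell$ provide. What makes the telescope close is that each Lie derivative costs exactly one extra order in the $\kCb$-index but is compensated by dropping one step further down the scale, so the slack of at least one unit per $\ell_s$ in both indices matches the trade-off exactly. This is also the reason that \eqref{e.class-Gl} is formulated with the coupled constraint $j + k \leq N$ rather than separate bounds on $j$ and $k$, since only the coupled constraint is preserved under the recursion.
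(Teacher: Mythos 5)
Your proposal is correct and follows essentially the same route as the paper's proof: induction on $\ell$, a one-step product/chain-rule closure property for a single Lie derivative $\D_\mu H = \D H\, F^\mu$ (stated in the paper as the ``simple fact'' that $F \in \kCb^n(\kY_{i+j},\kY_j)$ and $G \in \kCb^{n+1}(\kY_j,\kY)$ give $\D G\,F \in \kCb^n(\kY_{i+j},\kY)$), and the observation that the loss indices telescope to exactly $\ell$ while each unit of lost regularity is paid for by the slack $\ell_s \geq 1$ in the coupled constraint $j+k \leq N$. The only cosmetic difference is that the paper phrases the peeling step as a quantitative norm estimate in the three-parameter norms $\snorm{\cdot}{N,K,S}$ rather than as a membership statement.
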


\begin{proof}
The proof is based on the simple fact that $F \in
\kCb^n(\kY_{i+j},\kY_{j})$ and $G \in \kCb^{n+1}(\kY_{j},\kY)$ implies
that $\D G\, F\in\kCb^n(\kY_{i+j},\kY)$.  Thus, it remains to observe
that repeated application to the terms in the inner sum of
\eqref{e.mod-vect} causes all loss indices to always sum up to $\ell$.
We proceed by induction in $\ell$.  The case $\ell=1$ does not require
proof.  Assume therefore that $\ell>1$ and the lemma is proved up to
index $\ell-1$.  For $\ell> \ell_1\geq 1$, we estimate, with $G \equiv
\D_{\ell_2} \ldots \D_{\ell_{i-1}} F^{\ell_i}$ that
\begin{align}
 & \snorm{\D_{\ell_1} G}{N,K+1,\ell}
   = \snorm{\D G \, F^{\ell_1}}{N,K+1,\ell}
      \notag \\
  & = \max_{\substack{j + k \leq N \\ \ell \leq k \leq K+1}}
      \norm{\D^j(\D G \, F^{\ell_1})}%
           {\kC(\kU_k; \kE^j(\kY_k,\kY_{k-\ell}))}
      \notag \\
  & \leq 2^N \max_{\substack{j+k \leq N \\ \ell \leq k \leq K+1}}
         \norm{\D^{j+1} G}%
              {\kC(\kU_{k-\ell_1}; \kE^{j+1}(\kY_{k-\ell_1},
               \kY_{k-\ell}))} 
         \max_{\substack{j+k \leq N \\ \ell_1 \leq k \leq K+1}}
         \norm{\D^{j} F^{\ell_1}}%
              {\kC(\kU_k; \kE^j(\kY_k, \kY_{k-\ell_1}))}
      \notag \\
  & = 2^N \max_{\substack{j+k \leq N+1-\ell_1 \\ 
                      \ell-\ell_1 \leq k \leq K+1-\ell_1}}
         \norm{\D^{j} G}%
              {\kC(\kU_k; \kE^j(\kY_k, \kY_{k-(\ell-\ell_1)}))} \,
      \snorm{F^{\ell_1}}{N,K+1,\ell_1} 
      \notag \\
  & \leq 2^N \, \snorm{G}{N,K+1-\ell_1,\ell-\ell_1} \,
           \snorm{F^{\ell_1}}{N,K+1,\ell_1}  \,,
  \label{e.prod-est-first-step}
\end{align}
provided that $G$ is of class \eqref{e.class-Gl} with $K$ replaced by
$K-\ell_1$ and $\ell$ replaced by $\ell-\ell_1$.  Here the first
inequality is based on the product rule and selective weakening of the
norm on the domain spaces, thereby increasing the respective operator
norms.  The identity between the third and the fourth line is achieved
by redefining $j+1$ as $j$ and $k-\ell_1$ as $k$.  The final
inequality holds because $\ell_1 \geq 1$, so that we are strictly
extending the range of the running indices.  We note that the second
term in the final line of \eqref{e.prod-est-first-step} is bounded by
the induction hypothesis.  The estimation of the first term in the
final line of \eqref{e.prod-est-first-step} can now be done
recursively to resolve the entire product $\D_{\ell_1} \cdots
\D_{\ell_{i-1}} F^{\ell_i}$ from the inner sum of \eqref{e.mod-vect}
in terms of quantities which are bounded by the induction hypothesis.
This is always possible, because at the $k$th step of this process we
lose $\ell_k$ rungs of smoothness, and the sum of the loss indices
satisfies $\ell_1+ \dots + \ell_i = \ell$ by construction.
\end{proof}

We now aim to derive estimates on the difference between $F^\ell$ and
$F_m^\ell$ with respect to the same type of norm.  In
\cite{OliverWulffGalerkin}, we already obtained a related result on
the difference between $\Psi^h$ and $\Psi_m^h$ which we can start
from.  Setting $\kI=(0,h_*)$, $\kU = \kD_{K+1}$, and $\kX =
\kY_{K+1}$, we define the norm
\begin{equation}
  \norm{\Psi}{N,K} 
  = \max_{\substack{j+k \leq N \\ 
           \ell \leq k \leq K}}
    \norm{\D_U^j \partial_h^\ell \Psi}{\kL_\infty
          (\kU \times \kI; \kE^j(\kX; \kY_{k-\ell}))} 
  \label{e.NK-norm}
\end{equation}
for $0 \leq K \leq N$.  Then the stability of the numerical method on
a scale of Hilbert spaces under spectral truncation can be formulated
as follows.

\begin{lemma}[{\cite[Theorem 3.7]{OliverWulffGalerkin}}]
\label{l.projection-error-scale}
Assume \textup{(A)}, \textup{(B1)}, and \textup{(RK1--3)}.  Then there
is $h_*>0$ such that for every $0 \leq S \leq K+1$,
\begin{equation}
  \norm{\Psi^h - \Psi^h_m}{N-1-S,K+1-S} = O(m^{-S}) 
  \label{e.psi-m-s-unif}
\end{equation}
as $m \to \infty$.  The order constants depend only on the bounds
afforded by \textup{(B1)}, \eqref{e.Rk}, \eqref{e.RKShA}, on the
coefficients of the method, and on $\delta$.
\end{lemma}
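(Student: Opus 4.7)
The plan is to bootstrap from the undifferentiated estimate of Lemma~\ref{l.projection-error} to the mixed derivative norms $\norm{\cdot}{N-1-S,K-S}$ by direct differentiation of the Runge--Kutta fixed-point equation on the scale $\{\kY_k\}$. The central mechanism is the projection inequality
\[
  \norm{\Q_m V}{\kY_{k-S}} \leq m^{-S} \, \norm{V}{\kY_k} \,,
\]
valid for every $V \in \kY_k$ and $0\leq S\leq k$, which follows from \eqref{eq:PmAEstimate} and the commutation of $\P_m$ with $A$. Thus every occurrence of $\Q_m$ can be traded for a factor $m^{-S}$ at the cost of $S$ spatial derivatives.

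First, I would derive the fixed-point identity for $\Delta W = W - W_m$,
\[
  \Delta W = (\id - h\a A)^{-1} \,
  \bigl[\Q_m \1 U + h\a \, \Q_m B(W) + h\a \, \P_m (B(W)-B(W_m))\bigr] \,,
\]
and observe, using \eqref{e.RKShA.a}--\eqref{e.RKShA.b}, that for sufficiently small $h$ this is a contraction on $\kY_{k-S}^s$. Combining the projection inequality with the Lipschitz bound on $B \colon \kD_k \to \kY_k$ afforded by (B1) yields $\norm{\Delta W}{\kY_{k-S}^s} = O(m^{-S})$. A parallel computation on
\[
  \Psi^h(U) - \Psi^h_m(U)
  = \mS(hA) \, \Q_m U
  + h \b^T (\id - h \a A)^{-1} \,
  \bigl[\Q_m B(W) + \P_m (B(W) - B(W_m))\bigr] \,,
\]
together with \eqref{e.RKShA.c}, then gives the case $j = \ell = 0$ of \eqref{e.psi-m-s-unif}.

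Next, for higher derivatives I would induct on $j + \ell$. Differentiating the fixed-point identity $j$ times in $U$ and $\ell$ times in $h$ and expanding via the chain rule produces, besides $\D_U^j \partial_h^\ell \Delta W$, products of lower-order derivatives of $\Delta W$, $W$, $W_m$, $B$, and of the resolvent family $h \mapsto (\id - h\a A)^{-1}$. Each $\partial_h$ applied to the resolvent produces a factor $h\a A(\id - h\a A)^{-1}$, which is bounded on $\kY_k^s$ by \eqref{e.RKShA.b} but costs one level on the scale; this is precisely why the norm $\norm{\cdot}{N,K}$ requires $\ell \leq k$ and why the image space is displaced by exactly $\ell$ levels to $\kY_{k-\ell}$. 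The lower-order contributions are absorbed by the induction hypothesis at the same $S$, while the ``leading'' $\Q_m$-terms yield the claimed $m^{-S}$ decay. All remaining cross terms involving $\P_m$ (which does not by itself produce decay) are controlled by the uniform multilinear bounds on $\D_U^j \partial_h^\ell W$, $\D_U^j \partial_h^\ell W_m$, $\D_U^j \partial_h^\ell \Psi^h_m$ supplied by Theorems~\ref{t.h-diff-unif} and~\ref{t.wm_psim-uniform}.

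The hard part will be the bookkeeping of the various loss indices. Each differentiation in $h$ consumes one level of smoothness, each use of $\Q_m$ converts $S$ levels into a factor $m^{-S}$, and closing the fixed-point contraction on a slightly shrunk subdomain costs one additional level; these three sources together account for the passage $K \mapsto K-S$ and $N \mapsto N-1-S$ in the statement. To keep the bookkeeping tractable, I would fix $S$ and induct on $j+\ell$, verifying at each step that all intermediate indices stay within the range afforded by the regularity class \eqref{e.class-Gl}: the constraint $\ell \leq k \leq K-S$ in the target norm leaves exactly $S$ levels of spare smoothness to pay for the $\Q_m$-factors, while $j+k \leq N-1-S$ leaves enough total order for the contraction and chain-rule operations. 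Uniformity in $m \geq m_*$ throughout is guaranteed by the $m$-independent bounds of Theorem~\ref{t.wm_psim-uniform}.
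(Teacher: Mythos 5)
The paper does not actually prove this lemma: it is quoted verbatim from the companion paper (\cite[Theorem~3.7]{OliverWulffGalerkin}), and the text immediately following it explicitly defers the proof there, noting that it ``is correspondingly more complicated even in spaces of finite order.'' So the only in-paper material to compare against is the zeroth-order, Gevrey analogue, Lemma~\ref{l.projection-error}. Your strategy --- the fixed-point identity for $W-W_m$, the splitting of $B(W)-\P_m B(W_m)$ into a $\Q_m$-term (traded for $m^{-S}$ at the cost of $S$ rungs) and a Lipschitz term, and the parallel computation for the update --- reproduces that proof exactly at $j=\ell=0$ and is the natural route to the derivative norms, so the overall plan is sound and consistent with what the cited reference does.

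That said, what you have is a plan rather than a proof, and the part you defer (``the bookkeeping of the various loss indices'') is where essentially all of the content lies. Two concrete points. First, the inductive step is only asserted: differentiating the fixed-point identity produces terms of the form $\D^i B(W)-\D^i B(W_m)$, whose control by $\norm{W-W_m}{}$ requires one \emph{additional} derivative of $B$ on the relevant rung --- this, rather than ``closing the contraction on a shrunk subdomain,'' is the actual source of the drop from $N$ to $N-1$; a domain shrinkage costs nothing on the smoothness scale. You also need to check that the linearized stage equation for $\D_U^j\partial_h^\ell(W-W_m)$ is uniformly solvable, i.e.\ that $\id-h\a(\id-h\a A)^{-1}\D B(W)$ is invertible with $m$- and $h$-uniform bounds; this is implicit in your ``contraction'' remark but never verified at positive derivative order. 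Second, a technical imprecision: $\partial_h(\id-h\a A)^{-1}=(\id-h\a A)^{-1}\a A(\id-h\a A)^{-1}$ carries no factor of $h$, so the relevant estimate is \emph{not} \eqref{e.RKShA.b} (which bounds $h\a A(\id-h\a A)^{-1}$ on $\kY^s$ without any loss of derivatives) but rather \eqref{e.RKShA.a} combined with $\norm{A}{\kY_{k}\to\kY_{k-1}}\le 1$ from \eqref{e.normA}; the conclusion --- one rung lost per $h$-derivative --- is correct, but the mechanism you cite is the wrong one. Neither point invalidates the approach, but as written the proposal establishes only the base case and a credible outline of the induction, not the lemma itself.
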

 
We note that Lemma~\ref{l.projection-error} above already provided us
with an exponential estimate on $\Psi^h - \Psi^h_m$ for Gevrey regular
data, whereas Lemma~\ref{l.projection-error-scale} here also asserts
bounds on derivatives with respect to $h$ and $U$; the proof is
correspondingly more complicated even in spaces of finite order of
smoothness and can be found in \cite{OliverWulffGalerkin}.

To proceed, we observe that Lemma~\ref{l.projection-error-scale} holds
with $K+1$ in the statement of the lemma replaced by any $\kappa$
between $S+1$ and $K+1$, with $K$ as defined in condition (B1).  Note
that in the definition of the norm \eqref{e.NK-norm}, we must
correspondingly read $\kX$ and $\kU$ as $\kY_\kappa$ and $\kU_\kappa$,
respectively.  Then, specializing to the particular value $k=\kappa-S$
in the definition of the norm appearing in \eqref{e.psi-m-s-unif}, we
obtain
\[
  \max_{\substack{j+\kappa \leq N-1 \\ S+\ell \leq \kappa}}
  \norm{\D_U^j \partial_h^\ell (\Psi^h - \Psi^h_m)}%
    {\kL_\infty(\kU_\kappa \times \kI;
     \kE^j(\kY_\kappa;\kY_{\kappa-S-\ell}))} 
  = O(m^{-S}) \,.
\]
Thus, fixing $\ell \in 1, \dots, K+1-S$ and taking the maximum over
the allowed range $\kappa = S+\ell, \dots, K+1$, we can write
\begin{equation}
  \max_{\substack{j+\kappa \leq N-1 \\ S+\ell \leq \kappa \leq K+1}}
  \norm{\D_U^j \partial_h^\ell (\Psi^h - \Psi^h_m)}%
       {\kL_\infty(\kU_\kappa \times \kI;
        \kE^j(\kY_\kappa;\kY_{\kappa-S-\ell}))} 
  = O(m^{-S}) \,.
  \label{e.psi-m-s2}
\end{equation}
Due to the definition of $G^\ell$ in \eqref{e.Gl}, this directly
implies that
\begin{equation}
  \snorm{G^\ell - G_m^\ell}{N-1,K+1,S+\ell} = O(m^{-S}) \,.
  \label{e.Gl-diff}
\end{equation}
 
\begin{lemma}[Stability of the modified vector field]
\label{l.mod-vect-stab}
Suppose $G^1, \dots, G^{K+1}$ and $\bar G^1, \dots, \bar G^{K+1}$ are
of class \eqref{e.class-Gl}, and let $F^\ell$ and $\bar F^\ell$ denote
expansion coefficients of the respective associated modified vector
fields defined via \eqref{e.mod-vect}.  Then, for $S \in 1, \dots, K$
and every $\ell \in 1, \dots, K+1-S$,
\[
  \snorm{F^\ell - \bar F^\ell}{N-1,K+1,S+\ell} 
  \leq c \, \max_{1 \leq k \leq \ell} \, 
       \snorm{G^k - \bar G^k}{N-1,K+1,S+k} \,,
\]
where the constant $c$ depends on the bounds on $G^k$ and $\bar G^k$
in the norm $\lvert \, \cdot \, \rvert_{N-1,K+1,k}$ for $k = 1, \dots,
\ell$.
\end{lemma}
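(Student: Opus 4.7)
The plan is to proceed by induction on $\ell$. The base case $\ell = 1$ is immediate, since $F^1 = G^1$ and $\bar F^1 = \bar G^1$, so the claimed estimate holds with $c = 1$. For the inductive step, assume the conclusion has been established for all indices strictly less than $\ell$, and use the recursion \eqref{e.mod-vect} to write
\begin{align*}
F^\ell - \bar F^\ell
= (G^\ell - \bar G^\ell)
- \sum_{i=2}^\ell \frac{1}{i!} \sum_{\ell_1+\cdots+\ell_i=\ell}
  \bigl[ \D_{\ell_1}\cdots\D_{\ell_{i-1}} F^{\ell_i}
  - \bar\D_{\ell_1}\cdots\bar\D_{\ell_{i-1}} \bar F^{\ell_i} \bigr],
\end{align*}
where $\bar\D_j Z \equiv \D Z \cdot \bar F^j$. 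Each bracketed expression is a multilinear functional of the factors $F^{\ell_1}, \ldots, F^{\ell_i}$ (respectively $\bar F^{\ell_1}, \ldots, \bar F^{\ell_i}$), so the difference can be telescoped as a sum of $i$ terms, each of which contains exactly one factor $F^{\ell_j} - \bar F^{\ell_j}$ with $\ell_j \leq \ell - 1$, while the remaining $i-1$ factors form some mixture of $F^{\ell_k}$ and $\bar F^{\ell_k}$.

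Each telescoped term will then be estimated by iterating the product estimate \eqref{e.prod-est-first-step} from the proof of Lemma~\ref{l.mod-vect}: loss-of-derivatives indices distribute additively across factors, so a factor with index $\ell_k$ contributes $\ell_k$ to the total loss. For the single difference factor I apply the induction hypothesis with the same $S$, which yields
\[
\snorm{F^{\ell_j} - \bar F^{\ell_j}}{N-1,K,S+\ell_j}
\leq c_j \max_{1\leq k \leq \ell_j} \snorm{G^k - \bar G^k}{N-1,K,S+k};
\]
this is applicable because $\ell_j \leq \ell - 1$ and $\ell \leq K - S$ force $\ell_j \leq K - S$. The loss contributed by the difference factor is thus $S + \ell_j$ instead of $\ell_j$, so the total loss of the telescoped product is $S + (\ell_1 + \cdots + \ell_i) = S + \ell$, exactly matching the norm on the left. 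The remaining $F^{\ell_k}$ and $\bar F^{\ell_k}$ factors contribute finite constants by Lemma~\ref{l.mod-vect}, which in turn controls them in terms of the a priori bounds on $G^k$ and $\bar G^k$.

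Combining the bound on the $(G^\ell - \bar G^\ell)$ term with the telescoped bounds and taking the maximum over all $j$ and $k \in \{1, \ldots, \ell_j\} \subseteq \{1, \ldots, \ell\}$ closes the induction with a combinatorial constant $c$ absorbing the partition counts, the $2^{N-1}$ factors from repeated product rules, and the bounds on $G^k$ and $\bar G^k$ in the norms $\snorm{\,\cdot\,}{N-1,K,k}$ for $k \leq \ell$. The main obstacle will be the meticulous bookkeeping of loss-of-derivatives indices through the nested product expansion: at each recursive application of \eqref{e.prod-est-first-step} one must verify that the weakening of domain norms is consistent with the partition $\ell_1 + \cdots + \ell_i = \ell$ and that the running indices stay within the admissible ranges $k \leq K$ and $j + k \leq N-1$, so that no intermediate term demands smoothness beyond what the hypothesis affords. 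This is a routine but delicate extension of the argument already carried out in \eqref{e.prod-est-first-step}.
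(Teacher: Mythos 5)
Your plan is correct and follows essentially the same route as the paper: induction on $\ell$, telescoping each difference of nested products $\D_{\ell_1}\cdots\D_{\ell_{i-1}}F^{\ell_i}-\bar\D_{\ell_1}\cdots\bar\D_{\ell_{i-1}}\bar F^{\ell_i}$ so that exactly one difference factor appears per term, and then iterating the product estimate \eqref{e.prod-est-first-step} so that the loss indices sum to exactly $S+\ell$, with the difference factor controlled by the induction hypothesis and the remaining factors by Lemma~\ref{l.mod-vect}. The only cosmetic difference is that the paper organizes the telescoping as a recursive two-term split $\D G\,(F^{\ell_1}-\bar F^{\ell_1})+\D(G-\bar G)\,\bar F^{\ell_1}$ rather than a flat sum of $i$ terms.
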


\begin{proof}
The proof follows the same route as the proof of
Lemma~\ref{l.mod-vect}.  We set $\bar\D_{\ell} G \equiv \D G \bar
F_{\ell}$.  The crucial estimate corresponding to
\eqref{e.prod-est-first-step} then takes the form
\begin{align*}
  \snorm{\D_{\ell_1} G - \bar \D_{\ell_1} \bar G}{N-1,K+1,S+\ell}
  & \leq \snorm{\D G \, (F^{\ell_1} - \bar F^{\ell_1})}{N-1,K+1,S+\ell} 
    \notag \\  
  & \quad
       + \snorm{\D (G - \bar G) \, \bar F^{\ell_1}}{N-1,K+1,S+\ell}
    \notag \\
  & \leq 2^N \, \snorm{G}{N-1,K+1-S-\ell_1,\ell-\ell_1} \,
         \snorm{F^{\ell_1} - \bar F^{\ell_1}}{N-1,K+1,S+\ell_1}
    \notag \\
  & \quad
       + 2^N \, \snorm{G - \bar G}{N-1,K+1-\ell_1,S+\ell-\ell_1} \, 
         \snorm{\bar F^{\ell_1}}{N-1,K+1,\ell_1} \,,
\end{align*}
where the estimates in the last inequality follow from
\eqref{e.prod-est-first-step}.  This reasoning can again be applied
iteratively to resolve the entire difference of products of the form
$\D_{\ell_1} \cdots \D_{\ell_{i-1}} F^{\ell_i}$, where we note that
the loss indices now add up to exactly $S+\ell$ as required.
\end{proof}

We now turn our attention to the optimally truncated modified vector
field $\tilde F \equiv \tilde f_m \circ \P_m$ where $m=m(h)$ is as in
\eqref{e.m}.  This is the same modified vector field which gives rise
to the modified Hamiltonian in Lemma~\ref{l.gevrey-hamiltonian}.  Then
the difference between $\tilde F$ and $F$, the exact vector field of
the semilinear evolution equation \eqref{e.pde}, can be estimated as
follows.

\begin{lemma} \label{prop:modVFpcloseF} 
Suppose conditions \textup{(A)}, \textup{(B0--2)}, and
\textup{(RK1--3)} are satisfied with $K+1 \geq P$, where
\begin{equation}
  P = \lceil p(q+1)^2/q + q \rceil + 1 \,,
  \label{e.pcondition}
\end{equation}
$q$ is defined in \textup{(B2)}, and $p$ is the order of the numerical
method.  Then the difference between the original vector field $F$ and
the modified vector field $\tilde{F}$ from
Lemma~\ref{l.gevrey-embedding} satisfies
\begin{equation} 
  \label{e.tildefFclose}
  \norm{\tilde{F}-F}{\kCb(\kD_{P};\kY)} = O(h^p) \,.
\end{equation}
\end{lemma}

\begin{proof}
We define two intermediate vector fields.  Let $\tilde F^a$ denote the
modified vector field of the numerical method applied to the original
semilinear evolution equation \eqref{e.pde} computed up to order
$a\leq K+1$.  Formally, as in the finite dimensional case, it has an
expansion of the form \eqref{eq:tildeFSeries},
\begin{equation}
  \tilde{F}^a = F + \sum_{j=p}^{a-1} h^j \, F^{j+1} \,.
  \label{eq:tildeFSeries.2}
\end{equation}
Note that $F = \tilde{F}^p$.  Due to \eqref{e.class-Gl} and
Lemma~\ref{l.mod-vect}, all coefficients $F^k$ in the expansion
\eqref{eq:tildeFSeries.2} and consequently $\tilde{F}^a$ are also of
class \eqref{e.class-Gl} with $\ell=a$.

We now choose $h_*$ and $m_*$ as in the proof of
Lemma~\ref{l.gevrey-embedding} and suppose $m \geq m_*$, so that the
projected modified vector fields are well defined.  Let $\tilde F_m^a
\equiv \tilde f_m^a \circ \P_m$ denote the corresponding modified
vector field of the projected system \eqref{e.slwe-m}, again up to
order $a$, with $m = O(h^{-q/(q+1)})$ as in \eqref{e.m}.  By
Lemma~\ref{l.mod-vect} applied to $G^\ell_m$, the vector field $\tilde
F_m^a$ is also of class \eqref{e.class-Gl}.

We now decompose
\begin{equation}
  \label{e.decompose}
  F - \tilde F = (F - \tilde F^a) 
  + (\tilde F^a - \tilde F_m^a) + (\tilde F_m^a - \tilde F) \,.
\end{equation}
We show that when $a$ is chosen as
 \begin{equation}
 \label{e.a}
  a = \lceil p (q+1) + q \rceil \,,
\end{equation}
each term on the right is $O(h^p)$ in appropriate norms.
 
A bound on the first difference on the right of \eqref{e.decompose}
follows directly from the definition of $\tilde F^a$ in
\eqref{eq:tildeFSeries.2}.  Using the norms defined in
\eqref{e.snormg},
\begin{equation*}
  \snorm{F - \tilde F^a}{N-1,K+1,a}
  \leq h^p \sum_{j=p}^{a-1} h^{j-p} \, \snorm{F^{j+1}}{N-1,K+1,a} \,.
\end{equation*}
By Lemma~\ref{l.mod-vect}, all norms in the right hand sum are finite,
so that, for $a \in 1, \dots, K+1$,
\begin{equation}
  \label{e.FFtildeClose}
  \snorm{F - \tilde{F}^a}{N-1,K+1,a} = O(h^p) \,.
\end{equation}

To estimate the second difference on the right of \eqref{e.decompose},
we apply Lemma~\ref{l.mod-vect-stab} with $\bar G^\ell = G_m^\ell$,
which we recall are also of class \eqref{e.class-Gl}, so that
\eqref{e.Gl-diff} applies, yielding
\begin{equation}
  \label{e.split2}
  \snorm{\tilde F^a - \tilde F_m^a}{N-1,K+1,a+S} = O(m^{-S}) 
\end{equation}
for $S \in 1, \dots, K+1-a$.
 
A bound on the third difference on the right of \eqref{e.decompose} is
provided by \eqref{e.tildefa-tildeF}, namely
\begin{equation}
  \label{e.split3}
  \norm{\tilde F_{m(h)}^a - \tilde F}{\kC(\kD_1^{\delta/2};\kY_1)}
  = O(h^a \, {m(h)}^{a+1}) \,.
\end{equation}

We now seek conditions under which the estimates \eqref{e.split2} and
\eqref{e.split3} are of order $h^p$.  Due to \eqref{e.m}, $m = {m(h)}
= O(h^{-q/(1+q)})$, so that the requirement $m^{-S}=O(h^p)$ leads to
the choice
\[
  S = \biggl\lceil p \, \frac{1+q}q \biggr\rceil \,.
\]
Similarly, the requirement $O(h^a \, m^{a+1}) = O(h^p)$ is equivalent
to
\[
  O(h^a m^{a+1}) = O(h^{a - \frac{q(a+1)}{q+1}})
  = O(h^{\frac{a}{q+1} - \frac{q}{q+1}})
  = O(h^p) \,,
\]
which leads to the choice \eqref{e.a}.

Since $P$ is defined such that $P \geq S+a$, see \eqref{e.pcondition},
the above estimates for the three terms of the decomposition
\eqref{e.decompose} imply \eqref{e.tildefFclose}.
\end{proof}

\begin{remark}
If we can change $\tilde{F}$ so that its leading order linear part is
$A$ rather than $A \P_{m(h)}$, then Theorem~\ref{t.GenMain} still
applies.  This is true since, by \eqref{e.qm-estimate}, for $U^0 \in
\kY_{\tau,L+1}$ the differences between the two modified vector fields
and their flows up to time $h$ are exponentially small in the
$\kY_1$-norm.
\end{remark}

In the Hamiltonian case, the previous result on $O(h^p)$-closeness of
true and modified vector field carries over to a statement on
$O(h^p)$-closeness of the corresponding Hamiltonians.

\begin{lemma} \label{l.H-TildeH} 
Under the assumptions of Lemma~\ref{prop:modVFpcloseF} suppose that,
in addition, the semilinear evolution equation is Hamiltonian
satisfying \textup{(H0--4)} and that the numerical method is
symplectic, i.e., satisfies \textup{(RK4)}, and is of order $p$.  Then
the modified Hamiltonian $\tilde{H}$ from
Lemma~\ref{l.gevrey-hamiltonian} can be chosen such that
\[
  \norm{H - \tilde{H}}{\kCb(\kD_{P};\R)} = O(h^p) \,.
\]
\end{lemma}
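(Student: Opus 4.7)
The plan is to reduce the Hamiltonian closeness estimate directly to the vector-field bound $\norm{F - \tilde F}{\kCb(\kD^{-\delta}_{a+b};\kY)} = O(h^p)$ from Lemma~\ref{prop:modVFpcloseF} by expressing both $H$ and $\tilde H$ as line integrals along a common straight segment, using the $\kY$-inner product. First, assumption \textup{(H4)} together with Theorem~\ref{t.starshaped} implies that $\kD^{-\delta}_{a+b}$ is star-shaped with non-empty convex kernel for sufficiently small $\delta \in (0,\delta_*]$; I would fix a base point $U^0 \in \ck(\kD^{-\delta}_{a+b})$, so that for every $U \in \kD^{-\delta}_{a+b}$ the segment from $U^0$ to $U$ stays inside $\kD^{-\delta}_{a+b} \subset \kD_1^{-\delta}$, and normalize the integration constant of Lemma~\ref{l.gevrey-hamiltonian} by demanding $\tilde H(U^0,h) = H(U^0)$. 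Using $F = \JJ \nabla_\kY H$ from Lemma~\ref{l.H2}, the fundamental theorem of calculus gives
\begin{equation*}
  H(U) - H(U^0)
  = \int_0^1 \langle \JJ^{-1} F(U^0 + t(U-U^0)), U-U^0\rangle_\kY \, dt.
\end{equation*}

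The main step is to justify the analogous representation
\begin{equation*}
  \tilde H(U) - \tilde H(U^0)
  = \int_0^1 \langle \JJ^{-1} \tilde F(U^0 + t(U-U^0)), U-U^0\rangle_\kY \, dt,
\end{equation*}
using the $\kY$-inner product rather than the $\kY_1$-inner product employed in the proof of Lemma~\ref{l.gevrey-hamiltonian}. The key observation is that $\tilde F = \tilde f_{m(h)} \circ \P_{m(h)}$ depends on $U$ only through its Galerkin projection $\P_m U$ onto the finite-dimensional subspace $\P_m \kY$, on which $\JJ^{-1}$ restricts thanks to Lemma~\ref{Le:JPm}. Symplecticity of the Runge--Kutta method \textup{(RK4)} with respect to the intrinsic symplectic form $\omega(U,V) = \langle \JJ^{-1}U,V\rangle_\kY$ implies, via the standard finite-dimensional backward error analysis of Section~\ref{s.mod-ham}, that the modified vector field $\tilde f_m$ on $\P_m \kY$ is Hamiltonian for $\omega\vert_{\P_m\kY}$ with $\JJ^{-1} D\tilde f_m$ self-adjoint in the $\kY$-inner product restricted to $\P_m \kY$. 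This produces a modified Hamiltonian $\tilde H_m$ on $\P_m \kY$ with $\tilde f_m = \JJ \nabla_\kY \tilde H_m$; pulling back by $\P_m$, the function $\tilde H(U,h) := \tilde H_m(\P_m U)$ satisfies $D\tilde H(V) W = \langle \JJ^{-1}\tilde F(V), W\rangle_\kY$ for all $V \in \kD_1^{-\delta/2}$ and $W \in \kY$, yielding the desired line-integral formula. Since $\kD_1^{-\delta/2}$ is star-shaped hence simply connected, this $\tilde H$ agrees with the modified Hamiltonian of Lemma~\ref{l.gevrey-hamiltonian} up to the normalizing constant already absorbed.

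Subtracting the two representations gives
\begin{equation*}
  H(U) - \tilde H(U)
  = \int_0^1 \langle \JJ^{-1}(F - \tilde F)(U^0 + t(U-U^0)), U-U^0\rangle_\kY \, dt,
\end{equation*}
and the conclusion follows from $\JJ^{-1} \in \kE(\kY)$ (by \textup{(H0)}), the uniform $\kY$-boundedness of $U - U^0$ on $\kD^{-\delta}_{a+b}$ via $\kY_{a+b} \hookrightarrow \kY$ combined with $\kY_{a+b}$-boundedness of the domain, and the $O(h^p)$ bound from Lemma~\ref{prop:modVFpcloseF}. The main obstacle is the middle paragraph: recasting $\tilde H$ intrinsically on the Galerkin subspace so that both $H$ and $\tilde H$ admit the same form of line-integral representation and the vector-field bound can be applied directly. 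Everything else is a routine application of the Poincar\'e-lemma-style integration together with the boundedness of $\JJ^{-1}$ and of the domain.
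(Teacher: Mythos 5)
Your proposal is correct and follows essentially the same route as the paper: fix a base point in the convex kernel of the star-shaped domain $\kD^{-\delta}_{a+b}$ (via (H4) and Theorem~\ref{t.starshaped}), normalize the integration constant so $\tilde H(U^0)=H(U^0)$, represent $H-\tilde H$ as a line integral of $\JJ^{-1}(F-\tilde F)$ along the segment as in Lemma~\ref{l.H2}, and conclude from $\JJ^{-1}\in\kE(\kY)$, the $\kY$-boundedness of the domain, and Lemma~\ref{prop:modVFpcloseF}. The only difference is that you spell out the (correct) justification that the line-integral formula for $\tilde H$ holds with the $\kY$-inner product even though Lemma~\ref{l.gevrey-hamiltonian} constructs $\tilde H$ via the $\kY_1$-inner product on $\P_m\kY$ -- a detail the paper subsumes under ``following the proof of Lemma~\ref{l.H2}.''
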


\begin{proof}
By (H4), $\kD_P$ is star-shaped with respect to some $U^0 \in \kD_P$.
Since the modified Hamiltonian from Lemma~\ref{l.gevrey-hamiltonian}
is defined only up to a constant, we can choose the constant of
integration such that $H(U_0) = \tilde H(U_0)$.  Then, following the
proof of Lemma~\ref{l.H2}, we estimate, for any $U \in \kD_{P}$,
\begin{align*}
  \lvert \tilde{H}(U) - H(U) \rvert
  & \leq \biggl| 
           \int_0^1 \langle 
                      \JJ^{-1} (F-\tilde{F})(t \, U + (1-t) \, U_0), 
                      U-U_0
                    \rangle \, \d t
         \biggr| \\
  & \leq \norm{\JJ^{-1}}{\kE(\kY)} \,
         \norm{F - \tilde{F}}{\kCb(\kD_{P}; \kY)} \,
         \sup_{U \in \kD_{P}} \norm{U-U^0}{\kY}  \,.
\end{align*}
Since $\kD_{P}$ is $\kY$-bounded, Lemma~\ref{prop:modVFpcloseF}
implies that the right hand side is $O(h^p)$.
\end{proof}
 

\section{Lower estimates in an example: A nonlinear Schr\"odinger equation}
\label{s.LowerEst}

In this section we set up a counter-example, motivated by
\cite{MaSch2001}, which shows that analyticity of the initial data is
necessary to achieve an embedding of the implicit midpoint rule into a
Hamiltonian flow.

\subsection{Model evolution equation}

We work with functions $u \colon [0,\infty) \to \kl_2(\N_0; \C)$ whose
components may be interpreted as the Fourier coefficients of a
function defined on the circle.  Further, we write $u=v + \i w$ to
identify the real and imaginary parts of the components of $u$.
 
We now define the Hamiltonian 
\begin{equation*}
  H = \frac12 \sum_{k=1}^\infty \omega_k \, \lvert u_k \rvert^2 
      +w_0 \Re f(u)
\end{equation*}
with  $\omega_k\geq 0$ and
\begin{equation*}
  f(u) = f(u_1, u_2, \dots) = \sum_\alpha c_\alpha \, u^\alpha \,,
\end{equation*}
where $\alpha=(\alpha_1, \alpha_2, \ldots) \in \N_0^{\N}$ is a
multi-index, each $c_\alpha$ is a real coefficient, $u^\alpha =
u_1^{\alpha_1} \, u_2^{\alpha_2} \, \cdots$ as usual, and the
summation is over all multi-indices with $\alpha_0 = 0$ and a finite
number of non-zero coefficients $\alpha_k$ for $k \geq 1$.

In terms of $v$ and $w$, this defines a Hamiltonian system
\begin{equation}
  \frac\d{\d t}
  \begin{pmatrix}
    v \\ w
  \end{pmatrix}
  = \JJ \, \nabla H(v,w) \,,
  \label{e.ham-model}
\end{equation}
where $\JJ$ is  the standard symplectic structure matrix
\[
  \JJ = 
  \begin{pmatrix}
    0  & \id \\ -\id & 0
  \end{pmatrix} \,.
\]
For $k=0$, we obtain by direct computation that
\begin{subequations}
\label{e.v-w}
\begin{align}
  \dot v_0 & =  \frac{\partial H}{\partial w_0} = \Re f(u) \,, 
  \label{e.v0}
   \\
  \dot w_0  &= -\frac{\partial H}{\partial v_0} = 0 \,.
  \label{e.w0}
\end{align}
For $k \geq 1$, 
\begin{align}
  \dot v_k =  \frac{\partial H}{\partial w_k} 
           = \omega_k \, w_k 
             + w_0 \, \Re \frac{\partial f(u)}{\partial w_k} \,,
             \label{e.vk} \\
  \dot w_k = \frac{\partial H}{\partial v_k} 
           = -\omega_k \, v_k 
             - w_0 \, \Re \frac{\partial f(u)}{\partial v_k} \,.
             \label{e.wk}
\end{align}
\end{subequations}

In all of the following, we assume the initial condition $w_0(0)=0$ so
that, due to \eqref{e.w0}, $w_0(t)=0$ for all $t \geq 0$.  Then
\eqref{e.vk} and \eqref{e.wk} combine into
\begin{equation*}
  \dot u_k = \i \omega_k \, u_k
\end{equation*}
for $k \geq 1$, which is immediately solved as
\begin{equation*}
  u_k(t) = u_k(0) \, \e^{\i \omega_k t} \,.
  \label{e.uk-sol}
\end{equation*}
Substituting \eqref{e.uk-sol} back into \eqref{e.v0} and integrating
in time, we obtain
\begin{align*}
  v_0(t) 
  & = v_0(0) + \Re \sum_\alpha c_\alpha \, u^\alpha(0) 
      \int_0^t \e^{\i \sum_k \omega_k \alpha_k t} \, \d t
      \notag \\
  & = v_0(0) + \Re \sum_\alpha c_\alpha \, u^\alpha(0) \, 
      \frac{\e^{\i \sum_k \omega_k \alpha_k t} - 1}%
           {\i \sum_k\omega_k \, \alpha_k} \,.
\end{align*}


\subsection{Implicit midpoint time discretization}

We write $u^n = v^n + \i w^n$ to denote a time-discretization of
\eqref{e.v-w}.  Specifically, one step of the implicit midpoint time
discretization with step size $h$ applied to \eqref{e.v0} and
\eqref{e.w0} takes the form
\begin{subequations}
\begin{gather}
  v_0^1 = v_0^0 + h \, \Re f((u^1+u^0)/2) \,, 
  \label{e.V0} \\
  w_0^1 = w_0^0 \,.
\end{gather}
\end{subequations}
Thus, assuming that $w_0$ is zero initially, $w^n_0$ remains zero in
every step.  Then, for $k \geq 1$, we obtain
\begin{equation*}\label{e.uk-IMPR}
  u_k^1 = u_k^0 + h \, \i \omega_k \, \frac{u_k^1+u_k^0}2
          + h \, \frac{w_0^1+w_0^0}2 \, (\dots) 
\end{equation*}
so that, noting that the last term is zero, we can write
\begin{gather}
  u^1 = \mS(hA) \, u^0
  \label{e.Uk}
\end{gather}
where $\mS(hA) = \diag (\ms_1, \ms_2, \dots)$ with
\begin{equation}
  \ms_k = \ms_k(h) = \frac{1+\tfrac12 \i \omega_k h}%
                          {1-\tfrac12 \i \omega_k h} \,.
  \label{e.sk}
\end{equation}
Plugging \eqref{e.Uk} into \eqref{e.V0}, we can write
\begin{gather}\label{e.V_0^1}
  v_0^1 = v_0^0 + h \Re f(\mG u^0)
        = v_0^0 + h \Re \sum_\alpha c_\alpha \, (\mG u^0)^\alpha 
\end{gather}
where $\mG = \mG(h) =  \diag (\mg_1, \mg_2, \dots)$ with
\begin{equation}\label{e.gk}
  \mg_k = \mg_k(h) = \frac{1+\ms_k}2
      = \frac1{1-\tfrac12 \i \omega_k h} \,.
\end{equation}


\subsection{Modified vector field}

To obtain an expression for the modified vector field for initial data
$w_0(0)=0$, we make the ansatz
\begin{gather}
  \dot {\tilde v}_0 = \Re(\tilde f(\tilde u))
  \qquad \text{with} \qquad
  \tilde f(u) = \sum_{\alpha} \tilde c_{\alpha} \, u^\alpha
  \label{e.tildev0}
\end{gather}
and that, for $k \geq 1$,
\begin{gather}
  \dot {\tilde u}_k = \i \, \tilde\omega_k \, \tilde u_k \,.
  \label{e.tildeuk}
\end{gather}
Integrating \eqref{e.tildeuk} from $0$ to $h$, identifying $\tilde
u(0) \equiv u^0$, and equating the resulting expression with
\eqref{e.Uk}, we immediately obtain that
\begin{gather}
  \ms_k = \e^{\i \tilde\omega_k h} \,.
  \label{e.omegak}
\end{gather}
Similarly, inserting the solution of \eqref{e.tildeuk} into
\eqref{e.tildev0}, integrating from $0$ to $h$, and equating the
resulting expression with \eqref{e.V_0^1}, we find
\begin{gather}
  \label{e.coeff}
  h \Re \bigl[ c_\alpha \, (\mG u^0)^\alpha \bigr]
  = \Re \biggl[ \tilde c_{\alpha} \,
    \frac{\e^{\i \sum_k \alpha_k\tilde\omega_k h} - 1}%
         {\i \sum_k \alpha_k \, \tilde\omega_k} \,
    (u^0)^\alpha
    \biggr] \,.
\end{gather}
Therefore, to satisfy \eqref{e.coeff}, we have to find $\tilde
c_\alpha$ such that
\begin{gather}
  h \, c_\alpha \,\mg^\alpha 
  = \tilde c_\alpha \, 
    \frac{\e^{\i \sum_k \alpha_k \tilde\omega_k h} - 1}%
         {\i \sum_k \alpha_k \, \tilde\omega_k} \, .
  \label{e.necessary}
\end{gather}
Note that the left hand side of this equation only vanishes when $h=0$
or $c_\alpha=0$.


\subsection{Resonances}

Equation \eqref{e.necessary} can be solved for $\tilde c_\alpha$
unless the numerator on the right hand side is zero.  Let us call this
a resonance.  Due to \eqref{e.omegak}, we can write the condition for
resonance as $\ms^\alpha = 1$.  Let us look for particular resonances
between three consecutive ``wave numbers'' such that
\begin{gather}
  \label{e.3resonance}
  \ms_{k-1} \, \ms_k \, \ms_{k+1} = 1 \,.
\end{gather}
Due to the definition of $\ms_k$ in \eqref{e.sk}, this condition reads
\begin{equation}
  \label{e.3Res}
  4 \, (\omega_k + \omega_{k+1} + \omega_{k-1}) 
  = h^2 \, \omega_{k-1} \, \omega_k \, \omega_{k+1} \,.
\end{equation}

We consider the case $\omega_k=k^2$ so that \eqref{e.v-w} is a
nonlinear Schr\"odinger equation.  We identify $u \equiv (v,w) \in
\kh_1\times \kh_1=\kY$, where $\kh_\ell = \kh_\ell(\N_0; \R)$.  Thus,
$\kY$ is the Sobolev space $\kH_{\ell}(\S^1;\C)$ in Fourier
coordinates.  Further, \eqref{e.v-w} is of the form \eqref{e.pde} with
$A$ the Laplacian in Fourier coordinates, i.e., $(A u)_k = \i \omega_k
u_k$.  Consequently, (A), (H0) and (H1) hold as described in
Section~\ref{ss.nse}; recall that we require $u \in \kh_1(\N_0; \C)$
to ensure that $H(u)$ is finite.  The nonlinearity $B$ from
\eqref{e.pde} is then defined by
\begin{equation}
  \label{e.B-sharpEx}
  B(u)_k = 
  \begin{cases}
  \begin{pmatrix}
     w_0 \Re \partial f(u)/\partial w_k \\
    -w_0 \Re \partial f(u)/\partial v_k
  \end{pmatrix}_{\vphantom\int} & \text{for } k \geq 1 \,, \\
  \begin{pmatrix}
     \Re f(u) \\ 0
  \end{pmatrix} & \text{for } k=0 \,.
  \end{cases}
\end{equation}
We wish to satisfy (B0--2) and (H2--4) with $\kD_k = \kB_R^{\kY_k}(0)$
and $\kD_{\tau,L} = \kB_R^{\kY_{\tau,L}}(0)$ for some positive $R$,
$\tau$, and $L$.  Let us look at one such $f$ where all interactions
are between triples of consecutive wave numbers, namely
\begin{equation*}
  f(u) = \sum_{j=2}^\infty u_{j-1} \, u_j \, u_{j+1}.
\end{equation*}
Due to the H\"older inequality,
\begin{align*}
  \lvert f(u) \rvert
  \leq \sum_{j=2}^\infty \lvert u_{j-1} \, u_j \, u_{j+1} \rvert
  \leq \norm[2]{u}{\kl_2} \, \norm{u}{\kl_\infty}
  \leq \norm[3]{u}{\kl_2} \,.
\end{align*}
so that $f \colon \kl_2(\N_0;\C) \to \C$ is a bounded trilinear form
on $\kl_2$, hence analytic as a function on $\kl_2$ and therefore also
on $\kh_1$.

Similarly, we can check that $B(u)$ defined as in \eqref{e.B-sharpEx}
is an analytic map from $\kh_k$ to itself for all $k\in \N_0$ so that
(B0), (B1), and (B2) hold for any $L\geq 0$, $\tau>0$, and $q=2$.  As
in Section~\ref{ss.nse}, we consider the case $q=2$ because this
Gevrey space consists of sequences $\{u_k\}$ whose Fourier series
$u(x) = \sum u_k \e^{\i kx}$ is analytic in $x$.

Let us now consider the resonance condition \eqref{e.3Res} for our
example where $\omega_k=k^2$.  We obtain
\[
  4 \, ((k-1)^2 + k^2 + (k+1)^2) 
  = h^2 \, (k-1)^2 \, k^2 \, (k+1)^2
\]
or, after simplification,
\[
  12 + \frac8{k^2} = h^2 \, (k^2-1)^2 \,.
\]
For $h$ sufficiently small, there is exactly one positive root $k(h)$.
Clearly, $k(h) = O(h^{-1/2})$ as $h \to 0$, and there is a resonance
whenever $k(h) \in \Z$.

At a resonance, the embedding error in the $v_0$ component is given by
the left hand side of \eqref{e.coeff}.  For our example,
$c_\alpha=1$, so that the embedding error reads
\[
  e(h) = h \Re \bigl( 
                 \mg_{k-1} \, \mg_{k} \, \mg_{k+1} \, 
                 u^0_{k-1} \, u^0_{k} \, u^0_{k+1}
               \bigr) \,.
\]
Since $k=O(h^{-1/2})$, \eqref{e.gk} shows that $\mg_{j} = O(1)$ for
$j=k-1,k,k+1$.  Clearly, the embedding error can only be exponentially
small provided $u^0_k$ decays exponentially as $k \to \infty$.  As a
specific example, take
\[
  u^0_k = \frac{\e^{-\tau k}}{k^{\ell+2}}
\]
for $k \geq 1$ and $u_0^0=0$.  It is easily seen that $u^0 \in
\kY_{\tau, \ell}$ for any $\tau,\ell\geq 0$.  For this initial
condition, the embedding error satisfies
\begin{equation}
  \label{e.ex-embedError}
  e(h) 
  =  O \biggl(
         \frac{h \, \e^{-3\tau k}}{k^{3\ell+6}}
       \biggr)
  = O \bigl( \e^{- c h^{-1/2}} \bigr)
\end{equation}
for some $c>0$ as in \cite{MaSch2001}.

\begin{remark}
\label{r.nonlocal} 
If, in the setting above, we choose
\[
  f(u) = 1/u_0 + \sum_{j=2}^\infty u_{j-1} \, u_j \, u_{j+1} \,,
\]
then the nonlinearity $B$ defined by $f(u)$ via \eqref{e.B-sharpEx}
can be considered on the open half-balls
\[
  \kD_k = \interior(\kB_R^{\kY_k}(0)) 
        \cap \{ u \colon u_0> \epsilon\} 
\]
for some $\epsilon>0$ fixed.  Then $\kD_k$ is convex, hence
star-shaped with respect to any $u \in \kD_k$.  Defining
$\kD_{\tau,\ell}$ analogously, we obtain a nested domain hierarchy on
which $B$ satisfies (B0--2) and (H2--4); (H0), (H1), and (A) hold true
as before.  This example shows that it makes sense to consider domains
$\kD_k$ different from balls.
\end{remark}



\section*{Acknowledgments}
 
The work of C.W. was supported by the Nuffield Foundation, the
Leverhulme Foundation, and by EPSRC grant EP/D063906/1.  M.O. was
supported by a Max--Kade Fellowship, by the ESF network Harmonic and
Complex Analysis and Applications (HCAA), and by German Science
Foundation grant OL 155/5-1.

 
\bibliographystyle{plain}

\end{document}